\numberwithin{equation}{section}
\numberwithin{figure}{section}
\let\oldtocsection=\tocsection
\let\oldtocsubsection=\tocsubsection
\let\oldtocsubsubsection=\tocsubsubsection
\renewcommand{\tocsection}[2]{\hspace{0em}\oldtocsection{#1}{#2}}
\renewcommand{\tocsubsection}[2]{\hspace{2em}\oldtocsubsection{#1}{#2}}
\renewcommand{\tocsubsubsection}[2]{\hspace{4em}\oldtocsubsubsection{#1}{#2}}
\def\comment#1{}
\newcommand{\R}{\mathbb{R}}
\newcommand{\N}{\mathbb{N}}
\newcommand{\Z}{\mathbb{Z}}
\newcommand{\Sc}{\mathbb{S}^1} 
\newcommand{\T}{\mathbb{S}^1}
\newcommand{\Diff}{\mathsf{Diff}}
\newcommand{\NE}{\mathsf{NE}}
\newcommand{\eps}{\varepsilon}
\newcommand{\PSL}{\mathsf{PSL}}
\newcommand{\PL}{\mathsf{PL}}
\newcommand{\id}{\mathrm{id}} 
\newcommand{\Homeo}{\mathsf{Homeo}}
\DeclareMathOperator{\rot}{\mathsf{rot}}
\DeclareMathOperator{\Isom}{\mathsf{Isom}}
\DeclareMathOperator{\rel}{\mathsf{rel}}
\newcommand{\St}{\mathsf{St}} 
\newcommand{\cI}{\mathcal{I}}
\newcommand{\cJ}{\mathcal{J}}
\newcommand{\cG}{\mathcal{G}}
\newcommand{\tcM}{\widetilde{\mathcal{M}}}
\newtheorem{thm}{Theorem}[section]
\newtheorem{thmA}{Theorem}
\newtheorem{corA}[thmA]{Corollary}
\newtheorem{lem}[thm]{Lemma}
\newtheorem{prop}[thm]{Proposition}
\newtheorem{conj}[thm]{Conjecture}
\newtheorem{claim1}{Claim}
\newtheorem{case}{Case}
\newtheorem{step}{Step}
\theoremstyle{definition}
\newtheorem{dfn}[thm]{Definition}
\theoremstyle{remark}
\newtheorem{rem}[thm]{Remark}
\newtheorem{ex}[thm]{Example}
\title[Ping-pong partitions and locally discrete groups of $\Diff_+^\omega(\T)$, II]{Ping-pong partitions and locally discrete groups of
	real-analytic circle diffeomorphisms, II: Applications}
\author[S. Alvarez, P. G. Barrientos, D. Filimonov, V. Kleptsyn, D. Malicet, C. Meni\~no, M. Triestino]{
	S\'ebastien Alvarez    \and Pablo G.~Barrientos \and Dmitry Filimonov \and Victor Kleptsyn \and
	Dominique Malicet \and Carlos Meni\~{n}o \and Michele Triestino
}
\date{\today}
\begin{document}

\begin{abstract}
	In the first part of this work we have established an efficient method to obtain a topological classification of locally discrete, finitely generated, virtually free subgroups of real-analytic circle diffeomorphisms. In this second part we describe several consequences, among which the solution (within this setting) to an old conjecture by P. R. Dippolito [Ann.~Math.~107 (1978), 403--453] that actions with invariant Cantor sets must be semi-conjugate to piecewise linear actions. In addition, we exhibit examples of locally discrete, minimal actions which are not of Fuchsian type.
	
	\smallskip
	{\noindent\footnotesize \textbf{MSC\textup{2020}:} Primary 37C85, 20E06. Secondary 20E08, 37B05, 37E10.}\\
	{\noindent\footnotesize \textbf{Keywords:} piecewise-linear maps, ping-pong, groups acting on the circle, Bass--Serre theory, virtually free groups, multiconvergence property, rotation number, locally discrete groups.}
\end{abstract}

\maketitle

\section{Introduction and results}

\subsection{On the structure of codimension-one foliations}\label{ss:structure_intro}
In the recent years, probably under the impulse of the monographs by Ghys \cite{ghys-circle} and Navas \cite{Navas2011} there has been an intense activity around the study of groups acting on the circle.
One historical motivation is the \emph{theory of codimension-one foliations}. Indeed, an action of a group $G$ on the circle $\T$ gives rise, by suspension of the action, to a codimension-one foliation of a closed manifold. There is a perfect dictionary between the dynamics of the action on $\T$ and the dynamics of the leaves of the foliation. Foliations defined by suspensions represent a particular class (e.g.~the manifold $M$ must admit a flat circle bundle structure; if the foliation has no compact leaf, then it has a unique minimal set; etc.) however their study is important for developing new techniques, manufacturing examples, and test conjectures.

The main motivation for this work is an old conjecture in foliation theory stated by Dippolito \cite{Dippolito} (see also Question 16 in the recent survey of Navas \cite{Navas-ICM} for the ICM 2018):
\begin{conj}[Dippolito]\label{conj:Dippolito}
	Let $(M,\mathcal F)$ be a codimension-one foliation of a closed manifold which is transversely $C^2$, with exceptional minimal set $\Lambda$. Then there exists a
	transverse measure supported on $\Lambda$ for which the Radon-Nikodym
	derivative of the action of any holonomy pseudogroup is locally constant.
\end{conj}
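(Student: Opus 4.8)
The plan is to prove the conjecture in the setting announced in the abstract, namely when the holonomy pseudogroup is (conjugate to the action of) a locally discrete, finitely generated, virtually free subgroup $G\le\Diff_+^\omega(\T)$ admitting an invariant Cantor set; the strategy is to show that such an action is semi-conjugate to an action by \emph{piecewise-linear} homeomorphisms of $\T$, and then to pull back the Lebesgue measure. First, via the suspension dictionary (and, for foliations which are not suspensions, after reducing the holonomy pseudogroup to this model situation), the statement becomes the following: if $G\le\Diff_+^\omega(\T)$ is as above, with exceptional minimal set $\Lambda\subset\T$, then there is a Radon probability measure $\mu$ on $\T$ with $\operatorname{supp}\mu=\Lambda$ such that, for every $g\in G$, the Radon--Nikodym cocycle $x\mapsto \tfrac{d\,g_*\mu}{d\mu}(x)$ is locally constant on $\Lambda$.

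Second, I would apply the topological classification of Part~I. Since $G$ has an exceptional minimal set it is not minimal, so Part~I provides a ping-pong partition adapted to $G$ and, crucially, shows that the conjugacy class of the action on $\Lambda$ — equivalently the conjugacy class of the \emph{minimalisation} $\bar G\curvearrowright\T$ obtained by collapsing each connected component of $\T\setminus\Lambda$ to a point — depends only on the combinatorial \emph{ping-pong type}: the presentation of $G$ as the fundamental group of a finite graph of finite groups, together with the cyclic incidence and transition data of the pieces. It also records the finitely many $G$-orbits of gaps of $\Lambda$; in the minimalisation these become finitely many orbits of points, whose union is a countable dense $G$-invariant set $D\subset\T$.

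Third, I would realise the minimalisation by piecewise-linear homeomorphisms while keeping track of $D$. Explicitly, construct a minimal representation $\varphi\colon G\to\PL_+(\T)$ carrying a ping-pong partition of the prescribed type — the finite vertex groups realised as piecewise-linear conjugates of finite groups of rational rotations preserving the relevant nested arc systems, and the edge generators realised as piecewise-linear homeomorphisms with finitely many breakpoints implementing Klein's criterion — and arranged so that \emph{every breakpoint of every generator lies in the distinguished invariant dense set $D_\varphi\subset\T$} corresponding to $D$. Since $\varphi(G)$ visibly carries a ping-pong partition of the same type as $G$, the classification of Part~I yields a topological conjugacy between $\bar G\curvearrowright\T$ and $\varphi(G)\curvearrowright\T$ carrying $D$ onto $D_\varphi$; composing it with the gap-collapsing map produces a monotone degree-one map $h\colon\T\to\T$ with $h\circ g=\varphi(g)\circ h$ for every $g\in G$, which contracts each gap of $\Lambda$ to a point of $D_\varphi$.

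Finally, put $\mu:=h^{*}\mathrm{Leb}$ (the Stieltjes measure with distribution function $h$). Because $h$ sends each gap of $\Lambda$ to a single point, $\mu$ gives no mass to $\T\setminus\Lambda$, so $\operatorname{supp}\mu=\Lambda$; transporting the Radon--Nikodym cocycle of $\mathrm{Leb}$ through $h\circ g=\varphi(g)\circ h$ gives, for $x\in\Lambda$,
\[
\frac{d\,g_*\mu}{d\mu}(x)=\bigl|(\varphi(g)^{-1})'(h(x))\bigr|.
\]
If $x$ is not an endpoint of a gap of $\Lambda$, then $h(x)\notin D_\varphi$, so $h(x)$ avoids the finite breakpoint set of $\varphi(g)^{-1}$, and since $h$ maps a small $\Lambda$-neighbourhood of $x$ into a small arc around $h(x)$ on which $\varphi(g)^{-1}$ is affine, the cocycle is constant near $x$. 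If $x$ is an endpoint of a gap $J$, then $\Lambda$ accumulates to $x$ only from the side opposite to $J$, so a small $\Lambda$-neighbourhood of $x$ is one-sided and is mapped by $h$ into a one-sided arc at $h(x)$ on which $\varphi(g)^{-1}$ is affine — its only possible breakpoint there being the endpoint $h(x)$ itself — and again the cocycle is constant near $x$. Thus $x\mapsto \tfrac{d\,g_*\mu}{d\mu}(x)$ is locally constant on $\Lambda$, and unwinding the suspension dictionary produces the transverse measure demanded by Conjecture~\ref{conj:Dippolito}. The heart of the argument, and the main obstacle, is the third step: one must build a piecewise-linear group realising \emph{exactly} the prescribed ping-pong type — so that the rigidity of Part~I applies — while simultaneously pinning every breakpoint to the distinguished orbit $D_\varphi$; this is delicate for the torsion part (realising finite-order piecewise-linear symmetries of nested arc systems) and requires a careful bookkeeping of slopes and breakpoint positions propagated along the ping-pong dynamics.
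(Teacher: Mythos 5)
Your proposal aims at the Dippolito conjecture itself (the locally constant Radon--Nikodym cocycle) rather than at the statement the paper actually proves, namely Theorem~\ref{t:Dippolito} (semi-conjugacy to a subgroup of Thompson's group $T$), from which the paper \emph{infers} Conjecture~\ref{conj:Dippolito} without spelling out the inference. Your overall route through Part~I's ping-pong classification and a piecewise-linear realization is the same as the paper's, and you correctly and usefully make explicit the step the paper leaves implicit: to get a measure supported on $\Lambda$ you must use a semi-conjugacy $h$ that collapses precisely the gaps of $\Lambda$, and to get a \emph{locally constant} cocycle you must control the position of the breakpoints of the PL realization so that on $\Lambda$ they are only hit at (one-sided) gap endpoints. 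That breakpoint bookkeeping is a genuine issue the paper's one-line proof of Theorem~\ref{t:Dippolito} does not address.

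Where the two routes differ materially is in how the PL realization is obtained, and this is where your step three has a real gap. The paper does \emph{not} build the PL group ``by hand'': it abstracts the needed interpolation and conjugation properties into the notion of a $D_0$-combinatorial group (Definition~\ref{d.combinatorial}), verifies them for Thompson's $T$ with $D_0=\Z[\tfrac12]/\Z$ (Lemma~\ref{e:Thompson}), and runs an induction along the graph-of-groups decomposition (Proposition~\ref{p:realization lemma}, handling amalgams and HNN extensions via the universal properties) to produce $\pi\colon G\to T$ matching the ping-pong data on $\Delta$; Theorem~\ref{t:realization theorem} then gives the semi-conjugacy. Your proposal instead asks for a direct construction of a \emph{minimal} PL representation with the prescribed ping-pong type and with every breakpoint pinned to a distinguished orbit, and you rightly flag this as the main obstacle. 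But the paper explicitly says that minimal realizations (Theorem~\ref{t.minimal_actions}) are only established for \emph{free} groups and only in the $C^\omega$ category, and that the extension to virtually free groups fails precisely because of the edge-stabilizer difficulties your construction would face for the torsion part. So as written, your step three relies on a statement that neither you nor the paper actually prove; you would need an analogue of Proposition~\ref{p:realization lemma} in the PL setting with the additional control that breakpoints are confined to the image of the gap orbit, or else first carry out the paper's Theorem~\ref{t:realization theorem} and then argue (separately, and this too is nontrivial) that the resulting semi-conjugacy can be post-composed with a further gap-collapsing map to a PL minimalization without losing the piecewise-linear structure or introducing bad breakpoints in $h(\Lambda)$. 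In short: the right-hand side of your argument (pulling back Lebesgue and analysing the cocycle) is sound and a welcome clarification, but the construction in the middle is not yet a proof and should be replaced by, or reduced to, the paper's Lemma~\ref{e:Thompson} + Proposition~\ref{p:realization lemma} + Theorem~\ref{t:realization theorem} pipeline, together with an explicit justification of the gap-collapsing and breakpoint-avoidance steps.
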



As Dippolito writes in \cite{Dippolito}, this conjecture is conditioned on the solution of a major open problem:


\begin{conj}[Dippolito]\label{conj:germs}
	Let $(M,\mathcal F)$ be a codimension-one foliation of a closed manifold which is transversely $C^2$, with exceptional minimal set $\Lambda$. Then there exists a semi-exceptional leaf (that is, a boundary leaf of $M\setminus \Lambda$), whose germs of holonomy maps form an infinite cyclic subgroup.
\end{conj}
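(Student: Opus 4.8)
This is Dippolito's outstanding open problem, so we do not expect a complete argument in the stated transversely $C^2$ generality; what follows is the line we would follow, the point at which it stalls, and the extra structure under which it can be pushed through. The plan is to work at the level of the holonomy pseudogroup: fix a closed complete transversal $T$ and let $\mathcal H$ be the holonomy pseudogroup of $(M,\mathcal F)$ on $T$, which is compactly generated and transversely $C^2$. The trace $K:=\Lambda\cap T$ is then a minimal Cantor set for $\mathcal H$, and the boundary leaves of $M\setminus\Lambda$ --- the semi-exceptional leaves --- correspond precisely to the $\mathcal H$-orbits of endpoints of \emph{gaps}, i.e.\ of connected components of $T\setminus K$. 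For an endpoint $x$ of a gap, the holonomy germs of the associated boundary leaf form the group $G_x$ of germs at $x$ of those $h\in\mathcal H$ fixing $x$, and the goal is to exhibit a gap endpoint with $G_x\cong\Z$.

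The first substantial step is to produce a non-trivial germ together with local freeness. Sacksteder's theorem --- where the transverse $C^2$ hypothesis is essential --- yields a contracting holonomy element somewhere on $K$; propagating it along $\mathcal H$ and keeping distortion under control via the Schwartz estimate, one obtains a gap $I$, an endpoint $x$ of $I$ lying on the side from which $K$ accumulates, and $h\in\mathcal H$ fixing $x$ and contracting towards $x$ a one-sided neighbourhood of $x$. In particular $G_x\neq\{1\}$. On the gap side, Kopell's lemma forces any $g\in G_x$ whose fixed points accumulate at $x$ to have trivial germ; on the Cantor side, minimality of $K$ excludes an interval of fixed points and the contraction $h$ excludes interior fixed points near $x$. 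Hence $G_x$ acts freely on a one-sided punctured neighbourhood of $x$, so by H{\"o}lder's theorem $G_x$ is torsion-free abelian and carries an archimedean order; in particular it embeds in $(\R,+)$.

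The hard part is to show that for at least one such endpoint the image of $G_x$ in $\R$ has rank $1$, which with the previous step gives $G_x\cong\Z$. Here lies the whole difficulty: in class $C^2$ there is no general rigidity preventing a free abelian germ group of rank $\geq 2$ (nor one with dense image in $\R$), and compact generation of $\mathcal H$ bounds only the number of generators of $\mathcal H$, not the rank of an individual stabilizer. The strategy would be to use the dynamics on the $\mathcal H$-orbit of the gap $I$: an $\R$-independent second germ at $x$ ought to force infinitely many gaps of comparable size to accumulate at $x$, contradicting either finiteness of the generating set or the bounded-distortion estimates. Turning this heuristic into a proof in full $C^2$ generality is the obstacle that keeps the statement open.

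Under the standing hypotheses of this series --- the holonomy group $G$ being a finitely generated, locally discrete, virtually free subgroup of $\Diff_+^\omega(\T)$ --- every step can be completed. The germ group $G_x$ equals the point stabilizer $\stab{G}{x}$, because an analytic circle diffeomorphism that restricts to the identity on an arc is globally the identity; since a non-trivial finite-order orientation-preserving circle homeomorphism has no fixed point, $\stab{G}{x}$ is torsion-free, hence a free group; and the ping-pong partition of Part~I identifies $\stab{G}{x}$ with the cyclic subgroup generated by the boundary element attached to the gap $I$. Being non-trivial by the contraction argument above (Sacksteder applies since $C^\omega\subset C^2$), this group is infinite cyclic. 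This establishes Conjecture~\ref{conj:germs} for this class of foliations and, combined with the semi-conjugacy to piecewise-linear actions obtained later in the paper, Conjecture~\ref{conj:Dippolito} as well.
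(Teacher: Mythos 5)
This statement is labelled as a \emph{conjecture} in the paper: the authors do not prove it, they explicitly record it as an open problem, and they cite Hector's lemma (from Hector's Strasbourg thesis, with a modern account in the appendix to \cite{Navas2006}) as the only available positive result, valid whenever the holonomy has a non-trivial jet --- in particular in transverse class $C^\omega$, which is the case actually used in this paper to derive Theorem~\ref{t:Dippolito}. Your proposal correctly recognizes this situation, and your diagnosis of the $C^2$ obstruction (bounding the rank of the torsion-free, archimedean-ordered germ group at a gap endpoint) matches the accepted picture of why the general conjecture remains open.

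On your analytic-case sketch: the closing appeal to the ping-pong partition of Part~I to identify $\stab{G}{x}$ with a cyclic subgroup is not a statement established there, and it is in any case unnecessary. Once $\stab{G}{x}$ is known to be torsion-free (hence a free group, being a torsion-free subgroup of a virtually free group) and abelian (by the \emph{germ} version of H\"older's theorem, using that each non-trivial analytic germ has $x$ as an isolated fixed point --- one should be careful to invoke the germ version rather than a free action on a fixed interval, since a uniform neighbourhood is not available a priori), it is automatically trivial or infinite cyclic, and non-triviality is supplied by the Sacksteder argument. This is, in substance, Hector's lemma; your sketch is consistent with it and with the paper's stance that this lemma is the precise input needed to unlock Conjecture~\ref{conj:Dippolito} in the analytic case.
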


The only result available in this direction goes back to the Ph.D. thesis of Hector (Strasbourg, 1972), which establishes it under the assumption of non-trivial $r$-jets for elements in the holonomy group, for some $r\ge 1$. This holds for instance in the case of transversely real-analytic foliations. This result is the so-called \emph{Hector's lemma}, see also Navas \cite[Appendix]{Navas2006} for a detailed account. For a short discussion around Conjecture \ref{conj:germs}, see also Question B.1.1 in the list of problems in foliation theory compiled by Langevin \cite{langevin_questions} in 1992.

Our principal contribution is to show (or at least give strong evidences) that indeed Conjecture~\ref{conj:germs} is the unique obstacle towards Conjecture~\ref{conj:Dippolito}. As a consequence of a more general result (Theorem \ref{t:realization theorem}), we are able to establish Conjecture~\ref{conj:Dippolito} for groups of real-analytic diffeomorphisms of the circle:
\begin{thmA}\label{t:Dippolito}\footnote{A little comment about the choice of numbering: the main results of \cite{MarkovPartitions1} appear as Theorems \ref{t:bpvfreeA}, \ref{t:bpvfreeB} and C, so here we start with Theorem \ref{t:Dippolito}, reducing confusion when giving reference.}
	Let $G\subset \Diff^\omega_+(\T)$ be a finitely generated group of real-analytic circle diffeomorphisms acting with a minimal invariant Cantor set. Then the action of $G$ is semi-conjugate to an action by piecewise linear homeomorphisms. More precisely, every such $G$ is semi-conjugate to a subgroup of Thompson's group $T$.
\end{thmA}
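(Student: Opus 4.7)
\medskip

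\noindent\textbf{Proof plan.} My strategy is to reduce to the general realization theorem (Theorem~\ref{t:realization theorem}) from the main body of the paper, which I treat as a black box, and verify that its hypotheses are satisfied in this setting.

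First I would collect the structural information on $G$. In real-analytic regularity, the presence of a minimal invariant Cantor set $\Lambda$ forces $G$ to be locally discrete: any sequence of elements accumulating at the identity on some open arc would, by analytic continuation, collapse to the identity globally, contradicting the non-trivial Cantor structure of $\Lambda$. Combined with Hector-type lemmata, the multiconvergence property (Deroin--Kleptsyn--Navas--Parwani, Filimonov), and the classical foliation-theoretic analysis of groups with an exceptional minimal set recalled in \cite{MarkovPartitions1}, one concludes that $G$ is moreover virtually free. This places us squarely within the hypotheses of the main classification theorems of Part I (Theorems~\ref{t:bpvfreeA}--\ref{t:bpvfreeB}).

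Next, I would invoke the ping-pong partition machinery from \cite{MarkovPartitions1}: after passing to a free subgroup $F\leq G$ of finite index, one produces an $F$-equivariant Markov-type partition of $\T$ into finitely many closed arcs, together with the combinatorial data (how generators permute and glue these arcs, cyclic orderings, identifications at endpoints, etc.) which completely encodes the symbolic dynamics of $F$ on $\Lambda$. At this point Theorem~\ref{t:realization theorem} applies and asserts that this combinatorial datum admits a realization by piecewise linear homeomorphisms with dyadic breakpoints and slopes in $2^\Z$, i.e.\ by elements of Thompson's group $T$. This produces a model action $\rho\colon F\to T$ with a minimal invariant Cantor set $\widehat{\Lambda}\subset\T$ combinatorially isomorphic to $\Lambda$. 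The desired semi-conjugacy $h\colon\T\to\T$ is then defined on $\Lambda$ by matching symbolic addresses, and extended monotonically and continuously across the complementary gaps in the unique way compatible with $F$-equivariance; the extension from the finite-index subgroup $F$ back to $G$ is handled by the standard bookkeeping (normal cores, induction) reviewed in \cite{MarkovPartitions1}.

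The hard part will be the realization theorem itself, which I am assuming: producing a realization inside Thompson's group $T$ --- rather than merely inside the abstract group $\PL_+(\T)$ of PL circle homeomorphisms --- requires aligning the cyclic order and the endpoint identifications of the ping-pong partition with a standard dyadic subdivision of $\T=\R/\Z$, and then verifying that the constructed generators preserve this dyadic structure. The finiteness of the partition and the virtually free structure of $G$ are both essential for this, and the delicate combinatorial translation of the ping-pong datum into compatible tree-pair diagrams is exactly what Theorem~\ref{t:realization theorem} is designed to carry out.
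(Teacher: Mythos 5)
Your overall strategy matches the paper's: establish that $G$ is locally discrete and virtually free, invoke Theorem~\ref{t:bpvfreeA} to get a proper ping-pong partition, and then apply the realization machinery to land in Thompson's group $T$. Two points deserve flagging, though.

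First, the detour through a free subgroup $F\leq G$ of finite index is unnecessary and in fact introduces a gap you would then need to close. Theorem~\ref{t:realization theorem} is stated and proved for \emph{marked virtually free} groups directly (the inductive proof of Proposition~\ref{p:realization lemma} handles amalgams over finite cyclic edge groups and HNN extensions, not only free products), so one simply applies it to $(G,\alpha,T)$. If you instead semi-conjugate only $F$ to a subgroup of $T$, promoting this to a semi-conjugacy of the full $G$-action is genuinely delicate --- you would have to show the $F$-semi-conjugacy can be chosen $G$-equivariantly, which is not what ``normal cores and induction'' gives for free. The paper sidesteps this entirely.

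Second, your route implicitly uses, but does not name, the ingredient the paper makes explicit: Lemma~\ref{e:Thompson}, i.e.\ that Thompson's group $T$ is $\Z[\tfrac12]/\Z$-combinatorial in the sense of Definition~\ref{d.combinatorial}. This is precisely what makes the realization theorem produce a subgroup of $T$ rather than merely of $\PL_+(\T)$; your description of ``aligning the cyclic order with a dyadic subdivision and checking generators preserve the dyadic structure'' is a reasonable gloss of what that lemma encodes, but the theorem as stated is indifferent to the target --- it accepts any $D_0$-combinatorial group. Finally, your heuristic for local discreteness (a $C^1$-approximating sequence on an arc ``collapses to the identity globally by analytic continuation'') is too quick: such elements need not approach the identity globally. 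The correct input is the combination of Ghys's theorem \cite{euler} (virtual freeness of groups with exceptional minimal set) and the Deroin--Kleptsyn--Navas analysis \cite{DKN2014}, as the paper indicates in Section~\ref{ss:structure_intro}.
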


The definition of Thompson's group $T$ will be given in Definition \ref{e:Thompson}. Concisely, it is defined as the group of all dyadic piecewise linear homeomorphisms of the circle $\T\cong\R/\Z$.

The proof of Theorem~\ref{t:Dippolito} relies on an old theorem of Ghys \cite{euler} and a recent work of Deroin, Kleptsyn and Navas \cite{DKN2014}, combined with the main result of the companion work by Alonso \textit{et al.}\ \cite{MarkovPartitions1}, after which we have an efficient way to determine a topological classification of groups of real-analytic diffeomorphisms with a minimal invariant Cantor set and we are able to apply Theorem \ref{t:realization theorem}. Precise statements will be recalled in the next section.
We are confident that our strategy for Theorem~\ref{t:Dippolito} can be extended to deal with transversely $C^2$, codimension-one foliations with an exceptional minimal set, with cyclic germs of holonomy of semi-exceptional leaves; however this is not immediate, as one needs to face extra combinatorial problems when passing from groups to pseudogroups.

\subsection{Multiconvergence property}\label{ss:multicon_intro}
The next results give further dynamical information about the actions of virtually free, locally discrete subgroups of $\Diff_+^\omega(\T)$, notably giving a description  of periodic points.

Before stating Theorem~\ref{mthm:multiconvergence}, recall that one of the deepest results on groups acting on the circle is the characterization of  Fuchsian groups (i.e.\ discrete subgroups of $\PSL(2,\R)$), up to $C^0$ conjugacy, by the so-called convergence property (by works of Tukia \cite{Tukia}, Gabai \cite{Gabai}, Casson and Jungreis \cite{Casson-Jungreis}), which is the case $K=1$ in the following definition.

\begin{dfn}\label{d:multiconvergence}
	Let $G\subset \Homeo_+(\T)$ be a subgroup. We say that $G$ has the \emph{multiconvergence property} if there exists a uniform $K\in\N$ such that for every infinite sequence $\{g_n\}_n$ of distinct elements in $G$, there exist finite subsets $A$ and $R\subset \T$, with $\#A,\#R\le K$, and a subsequence $\{g_{n_k}\}_k$ such that the sequence of restrictions
	$\{g_{n_k}\restriction_{\T\setminus R}\}_k$ pointwise converges, as $k\to\infty$, to the locally constant map $g_\infty$, with image $g_\infty(\T\setminus R)=A$, with $\#R$ discontinuity points at $R$ and such that $g_\infty(a)=a$ for every $a\in A\setminus R$.
\end{dfn} 

Typical examples of groups with the multiconvergence property are discrete subgroups of finite central extensions of $\PSL(2,\R)$, but we also have:

\begin{thmA}\label{mthm:multiconvergence}
	If $G\subset \Diff_+^\omega(\T)$ is a finitely generated, virtually free, locally discrete subgroup, then it has the multiconvergence property.
\end{thmA}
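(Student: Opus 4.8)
The plan is to exploit the topological classification established in the companion paper \cite{MarkovPartitions1}: a finitely generated, virtually free, locally discrete subgroup $G\subset\Diff_+^\omega(\T)$ admits a ping-pong partition, and its action is determined (up to semi-conjugacy, and in fact controlled quite tightly near the dynamically relevant points) by a Bass--Serre / graph-of-groups structure together with this partition. Since the multiconvergence property as stated is invariant under semi-conjugacy when we allow the finite sets $A$ and $R$ to be the preimages of the model's finite sets — or, more safely, since one can transport the statement through the semi-conjugacy with the constant $K$ only multiplied by a bounded factor coming from fibers — it suffices to prove the multiconvergence property for the model action coming from the ping-pong partition, or even directly for $G$ using the partition. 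I would therefore first reduce, via Theorem~\ref{t:realization theorem} and the classification, to a group acting with an explicit ping-pong dynamics on the circle.

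The core of the argument is then a ping-pong estimate. Let $\{g_n\}$ be an infinite sequence of distinct elements. Writing each $g_n$ in normal form with respect to the generating ping-pong data, I would track the first and last letters of the normal form. The ping-pong property says: if the last letter of $g_n$ is a generator $s$, then $g_n$ maps (almost all of) $\T$ into the ping-pong domain associated with $s$; dually, information about the first letter controls where the image cluster sits. After passing to a subsequence, one may assume the first letter and the last letter are eventually constant (there are finitely many generators), and moreover that the lengths $|g_n|\to\infty$ (a sequence of distinct elements of bounded length is finite in a finitely generated group). Then the standard contraction built into a ping-pong partition — each nested application of a letter shrinks the relevant domain — forces the images $g_n(\T\setminus R)$ to accumulate on a finite set $A$ of attracting-type points, with the exceptional set $R$ consisting of finitely many repelling-type points determined by the last letter (and the vertex groups sitting at the boundary of the partition pieces). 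The bound $\#A,\#R\le K$ comes from the finiteness of the partition: $K$ can be taken to be a function of the number of pieces of the ping-pong partition and the ranks of the vertex groups, all of which are uniformly bounded once $G$ is fixed. The relation $g_\infty(a)=a$ for $a\in A\setminus R$ is automatic because the accumulation points are fixed points of the limiting dynamics.

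The main obstacle, and the point that needs genuine care rather than soft arguments, is the behavior at the vertex groups that are \emph{not} infinite cyclic — e.g.\ finite subgroups or the $\mathbb{Z}/2\ast\mathbb{Z}/2$-type dihedral pieces that appear in the virtually free classification — and the interface between "linear part" and "parabolic/fixed-point part" of the dynamics inside a single partition piece. There, the naive "each letter contracts" heuristic can fail for the finitely many short words, and one must argue that only a uniformly bounded initial and terminal segment of the normal form can behave non-contractingly, while the bulk of the word still produces genuine contraction; this is exactly where one invokes the quantitative control on the ping-pong partition from \cite{MarkovPartitions1} (expansivity/north-south type behavior of the non-torsion generators, and the fact that torsion elements permute a finite set of pieces). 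Once this is in place, the finite sets $R$ and $A$ are read off as: $R=$ the orbit under the relevant finite vertex group of the boundary point(s) "seen" by the terminal letter of $g_{n_k}$, and $A=$ the analogous finite set for the initial letter; their cardinalities are bounded by $K:=C\cdot(\#\{\text{partition pieces}\})$ for a universal constant $C$ absorbing the vertex-group orders, completing the proof.
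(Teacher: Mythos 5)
The high-level idea---use the ping-pong partition, get contraction from nested applications of generators, and bound $\#A,\#R$ by the number of partition pieces---is exactly right, and the final bound $K$ you land on is in the same spirit as the paper's $K=\max_{s\in S}b_0(U_s)$. But there are a few genuine gaps in the route you propose.

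\textbf{The semi-conjugacy reduction does not work as stated.} You suggest proving multiconvergence for a model action and transporting it back through the semi-conjugacy of Theorem~\ref{t:realization theorem}. The semi-conjugacy $h$ is a monotone, non-injective map; when the minimal set is a Cantor set, $h^{-1}$ of a single point is typically an entire gap interval, so $h^{-1}(R')$ of a finite set $R'$ is a finite union of intervals, not a finite set. The ``bounded factor coming from fibers'' heuristic thus fails: the fibers are not of bounded cardinality, they are whole intervals. The paper avoids this entirely by never leaving the given analytic action.

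\textbf{You are fighting the vertex groups when you could simply remove them.} You identify, correctly, that the main obstacle is the behaviour at non-cyclic vertex groups and the interface between the torsion part and the contracting part of a word. The paper sidesteps this with a clean reduction: Lemma~\ref{l:enough_free} shows the multiconvergence property ascends from a finite-index subgroup, so one may assume $G$ is free with a free symmetric generating set $S$ and work with the DKN partition $\{U_s\}_{s\in S}$. All the normal-form bookkeeping you worry about for HNN extensions and amalgams disappears. Your proposal would need a careful argument at exactly the points you flag as hard; the paper's route makes those points vacuous.

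\textbf{Fixing only the first and last letter of the normal form is not enough.} The paper passes to a subsequence $\{g_m\}$ whose normal forms $s_{n_m}\cdots s_1$ are nested prefixes, i.e.\ the sequence converges to a point of $\partial G$. It is this full prefix stability that yields the nested chain $R_n=\overline{s_1^{-1}\cdots s_{n-1}^{-1}(U_{s_n})}$ via (IF/ping-pong) nesting and hence the repelling set $R=\bigcap_n R_n$. Fixing only the terminal letter leaves no control on the intermediate structure and no reason for the $R_n$ to be nested.

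\textbf{``Each letter contracts'' must be replaced by a genuine shrinking lemma.} The assertion that the images $g_n(\T\setminus R)$ accumulate on a finite set needs to know that the pieces shrink to points, not merely to small intervals. This is Proposition~\ref{p.I_tend_zero} in the paper, proved via the first-return decomposition of Lemma~\ref{l:first return prop}; it is the key quantitative input both for showing $R$ is finite (Claim 3 in the paper's proof) and for producing $A$. Your sketch invokes a soft contraction heuristic where this harder lemma is actually needed.

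In short: the intuition and the source of the uniform bound $K$ are correct, but (i) the semi-conjugacy transport has a real gap, (ii) the lack of the finite-index reduction leaves you with unnecessary and unresolved difficulties at vertex groups, (iii) the subsequence extraction must pin down the entire normal-form prefix, not only the endpoints, and (iv) the contraction to a finite set requires the first-return machinery, which your outline does not supply.
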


This immediately implies the following.

\begin{corA}\label{c:bound_period}
	If $G\subset \Diff_+^\omega(\T)$ is a finitely generated, virtually free, locally discrete subgroup, then the number of fixed points of a non-trivial element in $G$ is uniformly bounded.
\end{corA}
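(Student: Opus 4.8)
The plan is to read off the bound directly from the multiconvergence property of Theorem~\ref{mthm:multiconvergence}: the uniform constant will simply be $2K$, where $K$ is the integer provided by Definition~\ref{d:multiconvergence} for the group $G$.

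First I would dispose of the torsion case. If $g\in G$ is non-trivial of finite order, then $g$ preserves the metric on $\T$ obtained by averaging an arbitrary one over the cyclic group $\langle g\rangle$, hence $g$ is topologically conjugate to a non-trivial element of $\SO(2)$; in particular $\Fix(g)=\emptyset$ and there is nothing to prove.

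Now let $g\in G$ have infinite order. Then the iterates $\{g^{n}\}_{n\in\N}$ form an infinite sequence of pairwise distinct elements of $G$, so Theorem~\ref{mthm:multiconvergence} applies: there are finite sets $A,R\subset\T$ with $\#A,\#R\le K$, and a subsequence $\{g^{n_k}\}_k$, such that $g^{n_k}\restriction_{\T\setminus R}$ converges pointwise to a map $g_\infty$ with $g_\infty(\T\setminus R)=A$. Suppose $x\in\Fix(g)$. If $x\in R$ there is nothing to check; if $x\notin R$, then $g^{n_k}(x)=x$ for every $k$, and passing to the limit gives $g_\infty(x)=x$, so $x\in g_\infty(\T\setminus R)=A$. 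Hence $\Fix(g)\subseteq A\cup R$, and therefore $\#\Fix(g)\le\#A+\#R\le 2K$, which is the asserted uniform bound.

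The derivation is essentially a one-line consequence of the definition, so there is no genuine obstacle here; the only points requiring a moment's care are the separate (trivial) treatment of torsion elements and the observation that a fixed point of $g$ lying outside the ``repelling'' set $R$ is automatically attracted to itself by the limit map and hence belongs to the finite ``attracting'' set $A$.
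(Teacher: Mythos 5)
Your argument is correct and follows essentially the same route as the paper: the paper derives Corollary~\ref{c:bound_period} by combining Theorem~\ref{mthm:multiconvergence} with Lemma~\ref{l.bounded_fixed}, whose proof applies the multiconvergence property to the sequence $\{g^k\}_k$ for $g$ of infinite order and concludes $\#\Fix(g)\le 2K$, exactly as you do. Your explicit remark that a non-trivial torsion element of $\Homeo_+(\T)$ has empty fixed-point set, and the explicit derivation $\Fix(g)\subseteq A\cup R$, merely spell out steps the paper leaves tacit.
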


Let us discuss some direct consequences on the possible values of rotation numbers. The following definition is inspired by the monograph by Kim, Koberda and Mj \cite{KKmj}:
\begin{dfn}
	Let $G\subset \Homeo_+(\T)$ be a subgroup. The \emph{rotation spectrum} $\rot(G):=\{\rot(g)\mid g \in G\}$ of $G$ is the collection of rotation numbers of elements of $G$.
\end{dfn}

The classical Denjoy theorem states that if a diffeomorphism $f$ in $\Diff^2_+(\T)$ has irrational rotation number (thus the subgroup $\langle f\rangle\subset \Homeo_+(\T)$ has infinite rotation spectrum), then the action of $\langle f \rangle$ on $\T$ is minimal.
The following corollary is an analogue for finitely generated groups of real-analytic diffeomorphisms:
\begin{corA}\label{cor:infinite_rot_spectrum}
	Let $G$ be a finitely generated subgroup of $\Diff_+^\omega(\T)$ with infinite rotation spectrum. Then $G$ acts minimally on $\T$.
\end{corA}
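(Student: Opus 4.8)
The plan is to argue by contraposition: assuming that $G\subset\Diff_+^\omega(\T)$ is finitely generated and does \emph{not} act minimally on $\T$, I want to show that the rotation spectrum $\rot(G)$ is finite. Since $G$ is finitely generated and real-analytic, there are only three possibilities for a non-minimal action (Ghys' trichotomy, already available here): either $G$ has a finite orbit, or the action is semi-conjugate to a minimal action with a well-defined exceptional minimal Cantor set, or $G$ is — up to semi-conjugacy — equivariantly isomorphic to a group with a finite orbit. In all cases I first reduce to the semi-conjugate model, noting that $\rot$ is a semi-conjugacy invariant of each element, so it suffices to bound $\#\rot(G)$ for the model action.

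First I would handle the case of a finite orbit: if $G$ has a finite orbit of cardinality $q$, then every $g\in G$ permutes these $q$ points, so $\rot(g)\in\tfrac1q\Z/\Z$, which is a finite set; done. Next, the case with a minimal invariant Cantor set $\Lambda$: here I invoke Theorem \ref{t:Dippolito}, which says $G$ is semi-conjugate to a subgroup of Thompson's group $T$. For PL circle homeomorphisms with a minimal invariant Cantor set, one knows (this is part of the structure theory underlying \cite{MarkovPartitions1}, or can be extracted directly from the ping-pong/Markov partition description) that $G$ is virtually free and \emph{locally discrete}; in any case, the relevant point is that $G$ is finitely generated virtually free and locally discrete, so Corollary \ref{c:bound_period} applies and the number of fixed points of any nontrivial element is uniformly bounded by some $N$. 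An element $g$ with $\rot(g)=p/q$ in lowest terms and $g^q\ne\id$ has $g^q$ with finitely many fixed points, but more to the point, the multiconvergence property from Theorem \ref{mthm:multiconvergence} forces the periodic orbits to have uniformly bounded size: if $\rot(g)=p/q$ then $g^q$ fixes an entire periodic orbit of $g$, which has $q$ points, and a standard argument shows such a periodic orbit is contained in $\Fix(g^q)$, hence $q\le N$. Therefore $\rot(g)$ is always rational with bounded denominator, so $\rot(G)\subset\bigcup_{q\le N}\tfrac1q\Z/\Z$ is finite.

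The remaining case is that $G$ itself is virtually free and locally discrete but acts minimally — but that contradicts the non-minimality assumption, so it does not arise; the only subtlety is making sure the trichotomy is exhaustive, which is where I would be most careful. Concretely, the main obstacle is the bookkeeping between the three regimes of the Ghys trichotomy and checking that in the "Cantor set" regime the semi-conjugate PL model genuinely satisfies the hypotheses ("finitely generated, virtually free, locally discrete") needed to invoke Theorem \ref{mthm:multiconvergence} and Corollary \ref{c:bound_period}; the virtual freeness is guaranteed once one knows the action is minimal on $\Lambda$ and $C^2$ (here real-analytic), by the results recalled from \cite{MarkovPartitions1}, and local discreteness of the PL model follows because a non-locally-discrete $C^2$ action cannot preserve a Cantor set by the work of Deroin–Kleptsyn–Navas \cite{DKN2014}. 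Once these reductions are in place, the finiteness of the rotation spectrum in each case is elementary, and contraposition yields the corollary.
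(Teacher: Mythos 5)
Your contraposition and case split (finite orbit vs.\ exceptional minimal Cantor set) match the paper's strategy, and your handling of the finite-orbit case is fine. But in the Cantor-set case your argument has two genuine gaps, and one unnecessary detour.

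First, the detour: invoking Theorem~\ref{t:Dippolito} to pass to a subgroup of Thompson's group $T$ buys you nothing here. You end up, as you yourself acknowledge, extracting the needed facts (virtual freeness, local discreteness) about the \emph{original} real-analytic group $G$ from the results recalled from \cite{MarkovPartitions1} and \cite{DKN2014}, and then applying Corollary~\ref{c:bound_period} and Theorem~\ref{mthm:multiconvergence} to $G$ itself. The paper simply cites Ghys~\cite{euler} for virtual freeness of $G$ directly; nothing about the PL model is used. (Worse, Theorem~\ref{t:Dippolito} is proved \emph{via} virtual freeness and the ping-pong partition, so using it to ``deduce'' virtual freeness would be circular.) Also your phrasing of ``Ghys' trichotomy'' includes a spurious third case; the relevant dichotomy for a non-minimal finitely generated action is simply finite orbit or minimal invariant Cantor set.

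Now the substantive gaps. (1) You conclude that ``$\rot(g)$ is always rational with bounded denominator,'' but you never establish rationality. The paper uses Denjoy's theorem at this point: since $G$ is real-analytic and preserves a proper infinite closed set $\Lambda$, any element with irrational rotation number would, by Denjoy, act minimally, which would force $\Lambda = \T$. This step cannot be dropped: Corollary~\ref{c:bound_period} only bounds denominators of rotation numbers that are already known to be rational. (2) Your bound $q \le N$ via $\#\Fix(g^q) \le N$ requires $g^q \neq \id$, as you note; but when $g$ is a \emph{torsion} element of order $q$ with $\rot(g) = p/q$, $g^q = \id$ and Corollary~\ref{c:bound_period} gives you nothing. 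This case is not vacuous (vertex stabilizers are finite cyclic), and it is exactly where the paper inserts the observation that a finitely generated virtually free group has bounded torsion, which bounds $q$ in this regime. Without that observation your argument does not close.
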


\begin{proof}
	Suppose $G$ does not act minimally, then either it acts with a finite orbit (in which case the proof ends easily), or it acts with a minimal invariant Cantor set. In the latter case, Denjoy theorem implies that every element has rational rotation number. Moreover by a result of Ghys \cite{euler}, $G$ must be virtually free. Observe that a finitely generated virtually free group cannot have elements of arbitrary large period; also, by Corollary \ref{c:bound_period}, we have a uniform bound on periods of periodic orbits and thus on denominators of rotation numbers.
\end{proof}

\begin{rem}\label{r.Thompson_spectrum} The regularity assumption $G\subset \Diff_+^\omega(\T)$ in Corollary \ref{cor:infinite_rot_spectrum} is necessary: Thompson's group $T\subset \mathsf{Homeo}_+(\T)$ has infinite rotation spectrum (it contains all dyadic rotations, and actually $\rot(T)=\mathbb Q/\mathbb Z$, see Lemma \ref{e:Thompson}) and is semi-conjugate to a subgroup of $\Diff^\infty_+(\T)$ acting with a minimal invariant Cantor set (Ghys and Sergiescu \cite{ghys-sergiescu}).
\end{rem}

In \cite{Matsuda2009}, Matsuda proved that if a subgroup $G\subset \Diff_+^\omega(\T)$ is non-locally discrete and acts without finite orbits, then $G$ has infinite rotation spectrum. As a direct consequence of Corollary \ref{c:bound_period}, we obtain that the converse holds for virtually free groups:
\begin{corA}\label{cor:rot_spectr_charac}
	Let $G$ be a finitely generated, virtually free subgroup of $\Diff^\omega_+(\T)$, whose action on $\T$ has no finite orbit. The following statements are equivalent:
	\begin{enumerate}
		\item $G$ is locally discrete;
		\item the rotation spectrum $\rot(G)=\{\rot(g)\mid g \in G\}$ is finite. 
	\end{enumerate}
\end{corA}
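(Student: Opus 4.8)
The plan is to combine the two cited results at each implication. For the direction (1) $\Rightarrow$ (2): if $G$ is locally discrete, finitely generated and virtually free, then Corollary \ref{c:bound_period} applies and gives a uniform bound $N$ on the number of fixed points of any nontrivial element. I would first recall that an element $g \in \Homeo_+(\T)$ has rational rotation number $p/q$ (in lowest terms) if and only if it has a periodic orbit, and every point of that orbit is a fixed point of $g^q$; hence $g^q$ has at least $q$ fixed points. Since $\langle g \rangle \subset G$ and $g^q$ is nontrivial whenever $g$ is (here one uses that $G$ is torsion-free-by-finite, so either $g$ has finite order — bounded by the index of a free subgroup — or $g^q \ne \id$), Corollary \ref{c:bound_period} forces $q \le N$ (up to the finitely many torsion elements, whose rotation numbers form a finite set anyway). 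Therefore $\rot(G)$ consists of rationals with bounded denominator inside $\mathbb{Q}/\mathbb{Z}$, a finite set. One also needs to exclude irrational rotation numbers: an element with irrational $\rot(g)$ generates an infinite-order cyclic group, and by Corollary \ref{cor:infinite_rot_spectrum} (or directly by the discussion preceding it) $\langle g \rangle$, hence $G$, would act minimally; but then $g$ is semi-conjugate to an irrational rotation and, being real-analytic, by Denjoy's theorem $g$ is $C^0$-conjugate to that rotation, so $\langle g \rangle$ already has infinite rotation spectrum — this does not directly contradict (1), so instead the cleanest route is: irrational $\rot(g)$ makes $\rot(G)$ infinite, which is exactly the negation of (2), so under hypothesis (1) we simply prove (2) and this case is subsumed — more carefully, one argues that if $\rot(G)$ were infinite then by Matsuda's theorem (quoted just above the corollary) $G$ would be non-locally discrete, contradicting (1). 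Thus (1) $\Rightarrow$ (2) follows, and in fact the contrapositive of Matsuda's result gives it almost immediately once we know $G$ has no finite orbit, so the role of Corollary \ref{c:bound_period} is really to make the statement self-contained and to pin down the finite set explicitly.

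For the direction (2) $\Rightarrow$ (1): this is precisely the contrapositive of Matsuda's theorem from \cite{Matsuda2009}, which states that a subgroup $G \subset \Diff_+^\omega(\T)$ that is non-locally discrete and acts without finite orbits has infinite rotation spectrum. So if $\rot(G)$ is finite and $G$ acts without finite orbits (both hypotheses in force), then $G$ cannot be non-locally discrete, i.e.\ $G$ is locally discrete. Here the virtual freeness is not even needed for this implication; it enters only in (1) $\Rightarrow$ (2) via Ghys's theorem and Corollary \ref{c:bound_period}. The write-up should therefore be organized as: (2) $\Rightarrow$ (1) is immediate from Matsuda; (1) $\Rightarrow$ (2) uses Corollary \ref{c:bound_period} together with the elementary fact relating denominators of rotation numbers to periods of periodic orbits, plus the remark that torsion elements in a finitely generated virtually free group have bounded order.

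The main obstacle — really the only non-bookkeeping point — is making the passage from "fixed points of $g^q$" to "bound on $q$" fully rigorous: one must be sure that $g^q$ is a nontrivial element of $G$ so that Corollary \ref{c:bound_period} applies, and handle the finitely many torsion elements separately (their rotation numbers form a finite subset of $\mathbb{Q}/\mathbb{Z}$ since their orders are bounded by the index of a finite-index free subgroup). A secondary subtlety is confirming that $g^q$ genuinely has at least $q$ distinct fixed points when $\rot(g) = p/q$ in lowest terms and $g$ has a periodic orbit: the periodic orbit has exactly $q$ points and each is fixed by $g^q$, and since $G$ acts without finite orbits this periodic orbit is of a single element, not $G$-invariant, so there is no contradiction — but the counting is what one needs. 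Everything else is a direct citation of Corollary \ref{c:bound_period}, the classification results, and \cite{Matsuda2009}.
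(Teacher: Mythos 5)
Your implication (2) $\Rightarrow$ (1) is exactly the paper's: the contrapositive of Matsuda's theorem, applied under the no-finite-orbit hypothesis. Your implication (1) $\Rightarrow$ (2) follows the paper's first, one-sentence argument (Corollary~\ref{c:bound_period} plus bounded torsion), and your treatment of \emph{rational} rotation numbers is correct: if $\rot(g)=p/q$ in lowest terms and $g$ has infinite order, the $q$ points of a Poincar\'e periodic orbit are fixed by the nontrivial element $g^q$, so $q$ is bounded by the constant from Corollary~\ref{c:bound_period}; if $g$ has finite order $m$ it is conjugate to $R_{k/m}$ and $m$ is bounded by the torsion bound. So far so good, and this matches the paper.

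The gap is in how you rule out \emph{irrational} rotation numbers. You write that ``if $\rot(G)$ were infinite then by Matsuda's theorem $G$ would be non-locally discrete, contradicting (1).'' That is Matsuda's theorem read in the wrong direction. Matsuda proves $[\text{non-locally discrete}] + [\text{no finite orbit}] \Rightarrow [\text{infinite rotation spectrum}]$; you are invoking $[\text{infinite rotation spectrum}] + [\text{no finite orbit}] \Rightarrow [\text{non-locally discrete}]$, which is the \emph{converse} --- and that converse (for virtually free groups) is precisely the content of (1) $\Rightarrow$ (2) in Corollary~\ref{cor:rot_spectr_charac}. So the step is circular, and without it you have only shown that all \emph{rational} rotation numbers lie in a finite set, not that no element has irrational rotation number. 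Your hesitations in the write-up (``this does not directly contradict (1)'', ``this case is subsumed'') suggest you felt this, and they should be taken as a warning sign rather than resolved by the appeal to Matsuda.

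Two non-circular ways to close the gap, one of which is exactly what the paper does. (a) Use Theorem~\ref{mthm:multiconvergence} directly: if $g$ had irrational rotation number $\alpha$, then by Denjoy's theorem $g$ is $C^0$-conjugate to $R_\alpha$; choosing $n_k$ with $n_k\alpha\to\beta\pmod 1$, the sequence $g^{n_k}$ converges uniformly to a homeomorphism (conjugate to $R_\beta$), which cannot be locally constant with finite image on any cofinite set, contradicting the multiconvergence property. (b) Use the paper's alternative argument: pass to a finite-index free subgroup $H$ (noting $k\rot(g)=\rot(g^k)\in\rot(H)$), take the DKN partition $\{U_s\}_{s\in S}$, write a nontrivial $g\in H$ in cyclically reduced form $s_n\cdots s_1$, deduce $g(U_{s_n^{-1}})\subset U_{s_n^{-1}}$, and conclude that $g^q$ fixes a connected component of $U_{s_n^{-1}}$ where $q\le K=\max_s b_0(U_s)$; this gives $\rot(g^q)=0$ and hence $\rot(g)\in\frac{1}{q}\Z/\Z$ with $q\le K$, which both proves rationality and bounds the denominator in one stroke. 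Approach (b) is cleaner because it never needs to discuss irrational rotation numbers at all.
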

\begin{rem} After Conjecture \ref{c-missing-piece}, we expect that the statement of the corollary remains valid without the virtual-freeness assumption.
\end{rem}

\begin{rem}
	Let us mention an open problem suggested by Matsuda \cite{Matsuda2009}: if $G\subset \Diff_+^\omega(\T)$ has infinite rotation spectrum, does it contain an element with irrational rotation number?
\end{rem}

\subsection{On the classification of locally discrete groups}\label{ssc:locally_discrete}
For the next result, we recall some context, starting from the following definition.

\begin{dfn}\label{d:locally_discrete}
	A subgroup $G\subset \Diff_+^\omega(\T)$ is ($C^1$-)\emph{locally discrete} if for every interval $I\subset \T$, 
	the restriction of the identity to $I$ is isolated in the $C^1$ topology among the set of restrictions 
	\[G\restriction_I=\{g\restriction_I\}_{g\in G}\subset C^1(I;\T).\]
\end{dfn}

\begin{rem}\label{r:discrete}
	Notice that \emph{a priori} the notion of local discreteness is stronger than simple discreteness. However no example of discrete, non-locally discrete group is known. In this direction, Bertrand Deroin informed us that he is able to find many such examples inside the group of $C^2$ diffeomorphisms. However, the question in real-analytic regularity stays open, see also the discussion about work of Eskif and Rebelo \cite{ER2015} below.
\end{rem}

Also, when $G\subset \Diff_+^1(\T)$ is a group of $C^1$ diffeomorphisms, we introduce the subset of \emph{non-expandable points} $\NE(G)=\{x\in \T\mid g'(x)\le 1\,\text{for every }g\in G\}$.
Recent developments of Rebelo, the authors and collaborators \cite{rebelo,FKone,DKN2014,football,Deroin} have clarified the picture for locally discrete groups:

\begin{thm}\label{t-state-of-the-art}Let $G\subset \Diff_+^\omega(\T)$  be a finitely generated subgroup.
	\begin{enumerate}[\rm (1)]
		\item If $\NE(G)\neq\varnothing$, then $G$ is locally discrete.
		\item If $G$ is locally discrete and is 
		\begin{itemize}
			\item neither one-ended, non-finitely presentable, nor
			\item one-ended, with unbounded torsion,
		\end{itemize} 
		then $G$ must be of the following type:
		\begin{itemize}
			\item either virtually free and $\NE(G)\neq\varnothing$, or
			\item $C^\omega$ conjugate to a finite central extension of a cocompact Fuchsian group, and $\NE(G)=\varnothing$.
		\end{itemize}
	\end{enumerate}
\end{thm}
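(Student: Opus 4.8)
This theorem is a synthesis of results already present in the literature, so the plan is not to prove it from scratch but to recall the ingredients and show how they assemble. I would first observe that the two items are essentially independent: item~(1) is a one-way sufficient condition for local discreteness, whereas item~(2) is the dichotomy obtained by splitting a locally discrete group according to whether its set of non-expandable points $\NE(G)$ is empty or not---a tautologically exhaustive alternative, so the content lies in identifying the two resulting families.

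For item~(1) I would argue by contraposition: assume $G$ is \emph{not} locally discrete, so there are an interval $I$ and nontrivial elements $g_n$ with $g_n\restriction_I\to\id$ in the $C^1$ topology; the goal is $\NE(G)=\varnothing$. The engine is the Nakai--Loray--Rebelo theory of vector fields in the $C^1$-closure of a real-analytic pseudogroup, in the forms developed by Rebelo and by Eskif--Rebelo (see \cite{rebelo,ER2015,football}): from the $C^1$-small elements $g_n$ one extracts, near suitable points, nontrivial real-analytic local flows $\{h^t\}$ lying in the closure of the action, and real-analyticity propagates this non-discreteness so that the open sets $U_g:=\{x\in\T : g'(x)>1\}$ ($g\in G$), together with the expansions produced along these local flows, cover all of $\T$. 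As each $U_g$ is open and $\T$ is compact, this is exactly $\NE(G)=\varnothing$.

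For item~(2) I would split on $\NE(G)$. If $\NE(G)\neq\varnothing$: the structure theorem of Filimonov--Kleptsyn \cite{FKone}, made effective by the ping-pong partitions of the companion paper \cite{MarkovPartitions1} and fed into Bass--Serre theory (through the induced action on the associated tree), shows that $G$ is virtually free; the two excluded classes of groups are precisely those that the available arguments cannot yet handle, whence their appearance as hypotheses. If $\NE(G)=\varnothing$: the compactness argument above yields uniform expansion---$\T$ is covered by finitely many $U_{g_i}$---which forbids a finite orbit and, by Ghys \cite{euler}, an exceptional minimal set, so the action is minimal; one then feeds this into the ergodic theory of Deroin--Kleptsyn--Navas \cite{DKN2014} to obtain a well-controlled stationary measure, and into the rigidity results of \cite{DKN2014,Deroin} to identify $G$, up to $C^\omega$ conjugacy, with a finite central extension of a cocompact Fuchsian group. (Consistently, such groups have $\NE(G)=\varnothing$ and are not virtually free, while by \cite{MarkovPartitions1} a locally discrete virtually free group always has non-expandable points, so the two alternatives are disjoint.)

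The hard part is the $\NE(G)=\varnothing$ branch of item~(2): passing from the soft input---locally discrete, uniformly expanding, real-analytic---to the rigid algebraic and geometric output (a finite central extension of a cocompact Fuchsian group, up to $C^\omega$ conjugacy) is a genuine rigidity statement, and it is precisely here that the two finiteness hypotheses earn their keep, excising the one-ended, non-finitely presentable groups and the one-ended groups with unbounded torsion that would otherwise escape the classification. A subsidiary technical difficulty, internal to item~(1), is the extraction of honest local flows from $C^1$-small elements and the real-analytic propagation of non-discreteness---the Nakai--Loray--Rebelo step---which is a delicate analytic argument on its own.
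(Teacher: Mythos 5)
The paper does not prove Theorem~\ref{t-state-of-the-art}: it is stated as a ``state-of-the-art'' synthesis with the phrase ``Recent developments of Rebelo, the authors and collaborators have clarified the picture'' and a bare list of references, so there is no proof in the paper to compare yours against. Your recognition that the content is a assembly of \cite{rebelo,FKone,DKN2014,football,Deroin}, and your overall architecture --- item (1) by contraposition via Rebelo's local flows, item (2) by splitting on whether $\NE(G)$ is empty --- are both correct, and the hard points you flag (the Nakai--Loray--Rebelo extraction of flows, the Deroin rigidity) are the right ones.

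There are, however, two misattributions worth correcting. First, the virtual-freeness conclusion in the branch $\NE(G)\neq\varnothing$ is \emph{not} supplied by \cite{MarkovPartitions1} fed into Bass--Serre theory: Theorem~\ref{t:bpvfreeA} of that companion paper \emph{assumes} $G$ is virtually free and then builds the ping-pong partition, so citing it here is circular. The virtual-freeness input is from \cite{football}, which combines the DKN/Filimonov--Kleptsyn Markov structure at the non-expandable points with Stallings's ends theorem and Dunwoody/Linnell accessibility to split $G$ as a graph of finite groups. Second, and related, you locate the two excluded hypotheses (one-ended and non-finitely-presentable, one-ended with unbounded torsion) in the $\NE(G)=\varnothing$ branch, claiming they are what lets Deroin's rigidity go through; in fact Deroin's theorem \cite{Deroin} (locally discrete, $\NE=\varnothing$ $\Rightarrow$ $C^\omega$-conjugate to a finite central extension of a cocompact Fuchsian group) does not need these hypotheses. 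They are precisely the \emph{accessibility} hypotheses needed in the \emph{other} branch, so that the iterated Stallings splitting produced from the non-expandable-point structure terminates in a decomposition over finite groups, yielding virtual freeness. A one-ended group that is not finitely presentable or has unbounded torsion could in principle carry the Markov structure without being accessible, which is the gap the hypotheses excise.

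With those two corrections the sketch is a faithful account of how the references assemble; the remaining points (the uniform-expansion-forces-minimality step you attribute to Ghys \cite{euler}, the disjointness of the two alternatives) are stated a bit loosely but are not wrong in spirit.
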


\begin{conj}[``Missing Piece'' Conjecture from Alvarez \textit{et al.} \cite{football}]\label{c-missing-piece}
	The dichotomy in Theorem \ref{t-state-of-the-art} holds for every locally discrete, finitely generated subgroup $G\subset \Diff_+^\omega(\T)$.
\end{conj}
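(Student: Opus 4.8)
The plan is to attack Conjecture~\ref{c-missing-piece} by eliminating the two ``exotic'' possibilities left open in Theorem~\ref{t-state-of-the-art}(2): that a locally discrete, finitely generated $G\subset\Diff_+^\omega(\T)$ be one-ended and non-finitely presentable, or one-ended with unbounded torsion. Since every other case is already covered, it suffices to prove: \emph{if such a $G$ is one-ended, then it is $C^\omega$-conjugate to a finite central extension of a cocompact Fuchsian group} --- whence, a posteriori, $G$ is finitely presentable with bounded torsion and neither exotic case occurs. The first step is to reduce to $\NE(G)=\varnothing$. The structure results behind Theorem~\ref{t-state-of-the-art}(1) and \cite{football,Deroin} attach to any group with $\NE(G)\neq\varnothing$ a ping-pong configuration, and one would want to convert it --- as is done in \cite{MarkovPartitions1} for the virtually free conclusion --- into an honest splitting of $G$ over a finite subgroup; by Stallings' theorem this is incompatible with one-endedness, so every one-ended locally discrete $G$ has $\NE(G)=\varnothing$.

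The second step is to establish minimality. A finite orbit is incompatible with one-endedness by the structure of locally discrete groups of real-analytic diffeomorphisms fixing a point, and a minimal invariant Cantor set is excluded by Ghys' theorem \cite{euler} (which forces virtual freeness, impossible for a one-ended group; alternatively one may invoke Theorem~\ref{t:Dippolito}). One is then left with a one-ended, locally discrete $G$ acting minimally with $\NE(G)=\varnothing$, and the crux is to upgrade this to a \emph{convergence-group} action in the sense of Tukia--Gabai--Casson--Jungreis --- that is, the case $K=1$ of Definition~\ref{d:multiconvergence}. The convergence characterization of Fuchsian groups would then give a $C^0$-conjugacy onto a cocompact Fuchsian action, and the real-analytic rigidity already used in the Fuchsian branch of Theorem~\ref{t-state-of-the-art}(2) would promote the conjugacy to $C^\omega$. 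To get the convergence property I would aim for a version of Theorem~\ref{mthm:multiconvergence} free of the virtual-freeness hypothesis, the bound $K=1$ coming from one-endedness (a nontrivial Bass--Serre splitting is precisely what produces $K>1$), and the multiconvergence itself coming from local discreteness plus analyticity via a Lefschetz-type bound on the number of periodic points --- the analogue of Corollary~\ref{c:bound_period}, but proved \emph{without} the partition machinery of \cite{MarkovPartitions1}, which only applies to virtually free groups.

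The main obstacle, I expect, is the unbounded-torsion sub-case, which deserves a separate dynamical argument. A finite-order element of $\Diff_+^\omega(\T)$ is real-analytically conjugate to a rational rotation, so unbounded torsion yields, for $q\to\infty$, elements $g_q$ of order $q$, each conjugate to a rotation by some $p_q/q$. Since a rotation by $1/q$ is $C^1$-small on any fixed interval, synchronizing the conjugating coordinates of the $g_q$ would immediately contradict local discreteness; the difficulty is that these coordinates are a priori unrelated, and controlling them seems to require exactly the uniform dynamical information (non-atomic invariant measure, bounded periodic-point count, expansivity off a finite set) that one is trying to establish --- a circularity. I would try to break it by working directly with the invariant measure of the minimal action: show it is non-atomic and preserved by the torsion elements, then use real-analyticity to bound the order of a torsion element in terms of fixed topological data (rank of $G$, Euler-class data via Ghys \cite{euler}), contradicting unboundedness. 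A successful treatment of this last step ought to extend, with only combinatorial complications, to the pseudogroup setting, and hence to bear on Dippolito's Conjecture~\ref{conj:germs} as well.
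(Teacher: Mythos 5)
The statement you are trying to prove is labeled by the paper as a \emph{conjecture} --- the ``Missing Piece'' Conjecture from \cite{football} --- and the paper offers no proof of it; it is presented as genuinely open. So there is no proof in the text against which to compare yours, and what you have written is, accordingly, a research plan rather than a proof. You have the good sense to flag the circularity in your treatment of the unbounded-torsion case yourself, but the earlier steps also contain gaps that should be named.

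The reduction to $\NE(G)=\varnothing$ in your first step does not go through as written. You want to say that $\NE(G)\neq\varnothing$ produces a ping-pong configuration which converts into a Bass--Serre splitting over a finite subgroup, contradicting one-endedness via Stallings. But the machinery of the companion paper \cite{MarkovPartitions1} (Theorem~\ref{t:bpvfreeA} here) that yields proper ping-pong partitions is proved only under the hypothesis that $G$ is \emph{virtually free} and locally discrete: the virtual-freeness is an input to the partition construction, not an output. For a locally discrete, one-ended group with $\NE(G)\neq\varnothing$ --- which is exactly the situation the conjecture is about --- there is no known way to extract a ping-pong configuration, let alone a nontrivial splitting over a finite group. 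In effect, this step assumes the conjecture. The same problem recurs in your convergence-group step: Theorem~\ref{mthm:multiconvergence} and Corollary~\ref{c:bound_period} are deduced from the DKN partition, which is only available for free (or virtually free) groups, so a ``virtual-freeness-free'' version is not a minor extension but is, again, the open content. Finally, your assertion that a finite orbit is incompatible with one-endedness for locally discrete subgroups of $\Diff_+^\omega(\T)$ is plausible but would need an argument; passing to the finite-index subgroup fixing a point reduces to the action on an interval, where one must still rule out one-ended, non-finitely-presentable, or unbounded-torsion behavior, and that is not automatic from real-analyticity alone.

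In short: the overall architecture you sketch --- exclude $\NE\neq\varnothing$ via Stallings, exclude the non-minimal cases via Ghys and Hector's lemma, then establish the $K=1$ convergence property and apply Tukia--Gabai--Casson--Jungreis plus real-analytic rigidity --- is the shape that one would hope for, and it correctly identifies the bottlenecks. But every load-bearing step relies on results that currently only exist in the virtually-free setting, so the proposal as written is circular at exactly the points where new input would be needed.
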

This suggests that locally discrete subgroups of $\Diff_+^\omega(\T)$ should be quite elementary, and that subgroups of $\Diff_+^\omega(\T)$ of more complicated type should be non-locally discrete. For instance, $\PSL(2,\R)\subset \Diff_+^\omega(\T)$ contains some closed hyperbolic 3-manifold groups as non-locally discrete subgroups (this is well-known to experts and one early reference is in the book of Maclachlan and Reid \cite[Section 13.7]{MR2003book}; see also the discussion in Bonatti, Kim, Koberda and Triestino \cite{BKKT}).

Natural examples of locally discrete groups are the fundamental groups of hyperbolic surfaces of finite type (they are discrete in $\PSL(2,\R)\subset\Diff^\omega_+(\T)$) and their finite central extensions. For a certain time, this was the only known class of such examples, thus motivating Ghys to ask whether any locally discrete subgroup of $\Diff^\omega_+(\T)$ acting minimally on the circle comes from a surface group (personal communication). We will see in this work (Section \ref{s:exotic}) that ping-pong partitions give a simple combinatorial way to construct a rich collection of exotic examples of locally discrete subgroups $G\subset \Diff_+^\omega(\T)$, giving negative answer to the question of Ghys. The main technical ingredients are Theorem \ref{t:realization theorem} (realization in $\Diff_+^\omega(\T)$), and Theorem \ref{t.minimal_actions} for realizing \emph{minimal} free group actions.

On the other hand, finitely generated subgroups of $\Diff^\omega_+(\T)$ that are non-locally discrete have been extensively studied by different authors in the last two decades. In particular, they display strong rigidity properties. As a sample statement, let us mention the following result by Eskif and Rebelo \cite{ER2015}, which is about groups which are non-$C^2$-locally discrete (the definition is the same as above, but considering the $C^2$ closure):
\begin{thm}[Eskif and Rebelo]\label{t:ERthmA}
	Consider two finitely generated, non-abelian subgroups $G_1$ and $G_2$
	of $\Diff^\omega_+(\T)$. Suppose that these groups are non-$C^2$-locally discrete. Then every homeomorphism $h:\T\to \T$ satisfying $G_2= hG_1h^{-1}$ coincides with an element of $\Diff^\omega_+(\T)$.
\end{thm}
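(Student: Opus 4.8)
The plan is to produce real-analytic local \emph{flows} inside the closures of the pseudogroups generated by $G_1$ and $G_2$ out of the failure of $C^2$-local discreteness, and then to show that the conjugating homeomorphism $h$ is rigid because it must be compatible with these two independently produced analytic structures.

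\smallskip
\noindent\textit{Reduction and production of flows.} I would first reduce to minimal actions. A non-$C^2$-locally discrete subgroup of $\Diff_+^\omega(\T)$ cannot preserve a Cantor set: if $\Lambda$ were a minimal invariant Cantor set, then by Hector's lemma (recalled in the introduction) the stabiliser of a gap of $\Lambda$ would have infinite cyclic germs at the endpoints, hence — by the real-analytic identity principle — would itself be infinite cyclic, and a standard argument with the gap structure then shows that the identity is isolated even in the $C^0$ topology among restrictions to subintervals, contradicting the hypothesis. If there is a finite orbit one passes to the finite-index subgroup fixing it pointwise and works on a complementary interval; so we may assume each $G_i$ acts minimally on $\T$ (or on an interval). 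Now fix an interval $I$ on which $\mathrm{id}\restriction_I$ is not isolated in the $C^2$ topology inside $G_1\restriction_I$, pick $g_n\in G_1\setminus\{\mathrm{id}\}$ with $g_n\restriction_I\to\mathrm{id}$ in $C^2$, and renormalise. The crucial analytic input is the real-analytic counterpart of Nakai's theorem (Loray--Rebelo, Rebelo--Scárdua, Eskif--Rebelo): since the $g_n$ extend holomorphically to a fixed complex neighbourhood of $I$, the renormalised sequence is relatively compact there and its limit is a \emph{nontrivial} real-analytic vector field $X_1$ on a subinterval $I_1\subset I$, whose time-$t$ flow $\phi_1^t$ lies in the $C^0$-closure of the pseudogroup of $G_1$. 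By minimality one conjugates this flow by group elements to obtain a finite cover $\T=\bigcup_k J_k$ carrying nontrivial real-analytic flows $\phi_k^t$ in the $C^0$-closure of the $G_1$-pseudogroup over each $J_k$; the same holds for $G_2$.

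\smallskip
\noindent\textit{Rigidity of $h$.} Conjugation by $h$ is a self-homeomorphism of $\Homeo_+(\T)$ for the $C^0$ topology, hence commutes with closures, so $h\phi_k^th^{-1}$ is a continuous one-parameter subgroup inside the $C^0$-closure of the $G_2$-pseudogroup over $h(J_k)$. The key claim — and this is where one uses that $G_2$, not merely $G_1$, is non-$C^2$-locally discrete — is that $h\phi_k^th^{-1}$ is again the flow of a real-analytic vector field $Y_k$, obtained by matching it against the analytic flows produced for $G_2$ and classifying the one-parameter subgroups that can occur in such closures. Granting this, restrict to a subinterval on which $X_k$ (hence $Y_k$) has no zero, use real-analytic rectifying charts $\xi,\eta$ to conjugate both $\phi_k^t$ and $h\phi_k^th^{-1}$ to the translation flow $s\mapsto s+t$, and observe that $F:=\eta\circ h\circ\xi^{-1}$ then satisfies $F(s+t)=F(s)+t$, so $F$ is affine and $h$ is real-analytic there. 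Thus $h$ coincides with a real-analytic diffeomorphism off the finitely many zeros of the $X_k$; at such a zero $p$ one arranges $p$ to be a hyperbolic fixed point of an element of $G_1$ (using density of such fixed points coming from minimality), linearises it and its $h$-conjugate by Koenigs' theorem, upgrades $h$ to $C^1$ at $p$ from the analyticity of the conjugated data, and concludes that a $C^1$ conjugacy between $x\mapsto\lambda x$ and $x\mapsto\mu x$ forces $\lambda=\mu$ and is linear — so $h$ is real-analytic across $p$ as well, whence $h\in\Diff_+^\omega(\T)$.

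\smallskip
\noindent\textit{Main obstacle.} The genuinely hard parts are the two analytic inputs. First, the Nakai-type production of a \emph{nontrivial} local analytic flow: the renormalisation, together with the use of holomorphic extensions to a complex neighbourhood to guarantee compactness and a non-degenerate limit, is delicate. Second, and most subtle, is the claim that the $h$-image of a $G_1$-flow is again a real-analytic flow of $G_2$: since $h$ is only assumed to be a homeomorphism, one cannot simply transport the analytic structure across it, and one must instead understand precisely which continuous one-parameter subgroups occur in the $C^0$- and $C^2$-closures of a non-$C^2$-locally discrete group, and show that the two analytic structures — produced independently for $G_1$ and for $G_2$ — are forced to agree up to $h$. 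It is exactly here that the two-sided hypothesis on $G_1$ and $G_2$ is indispensable.
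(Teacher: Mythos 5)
This statement is imported: Theorem~\ref{t:ERthmA} is due to Eskif and Rebelo and is cited from \cite{ER2015} without proof in the present paper, so there is no in-house argument to compare against. Your sketch is, as far as I can tell, a faithful high-level reconstruction of the Eskif--Rebelo strategy (Nakai/Loray--Rebelo production of nontrivial local analytic vector fields in the pseudogroup closure, transport of the flows through $h$, rectification to force affinity, and a separate treatment of the zeros via Koenigs linearisation), and you are right to isolate the two genuinely delicate analytic inputs as the places where the real work lies.

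Two small remarks. First, for the preliminary reduction it is cleaner (and is the logic the present paper actually uses, see the corollary proved just after the statement of Theorem~\ref{t:ERthmA}) to note that a finitely generated subgroup of $\Diff_+^\omega(\T)$ with a minimal invariant Cantor set is virtually free by Ghys \cite{euler} and therefore carries a ping-pong partition by Theorem~\ref{t:bpvfreeA}, which forces $C^1$-local discreteness and hence $C^2$-local discreteness; your Hector's-lemma route via cyclicity of gap stabilisers heads in the same direction, but the claim that this gives $C^0$-isolation of the identity among restrictions is stronger than needed and not obviously correct as stated. Second, your ``key claim'' that $h\phi_k^t h^{-1}$ is again the flow of a real-analytic vector field is indeed the heart of the matter and the step that uses non-$C^2$-local discreteness of $G_2$; as written it is asserted rather than argued, and a complete proof has to classify the one-parameter subgroups that can arise in the closure and match the two independently produced analytic structures across $h$, as you acknowledge. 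So the sketch is on target but, by your own admission, leaves the load-bearing analytic lemmas to \cite{ER2015}.
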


Clearly if $G$ is non-$C^2$-locally discrete, then it is also non-$C^1$-locally discrete, but the converse is currently unknown (cf.\ Remark  \ref{r:discrete}).
However, we have the following statement, which extends \cite[Theorem B]{ER2015} by Eskif and Rebelo (by a completely different approach, we are able to remove a technical assumption).

\begin{corA}
	Suppose that $\Gamma$ is a finitely generated, non-elementary Gromov-hyperbolic group, and consider two topologically conjugate faithful representations $\rho_1,\rho_2: \Gamma\to\Diff_+^\omega(\T)$. Assume that $ \rho_1(\Gamma)\subset \Diff_+^\omega(\T)$ is non-$C^2$-locally discrete. Then every orientation-preserving homeomorphism $h:\T\to \T$ conjugating the representations $\rho_1$ and $\rho_2$ coincides with an element of $\Diff^\omega_+(\T)$.
\end{corA}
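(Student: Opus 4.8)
The plan is to reduce the statement to Theorem~\ref{t:ERthmA}, applied to the pair $G_1:=\rho_1(\Gamma)$, $G_2:=\rho_2(\Gamma)$ and the conjugating homeomorphism $h$ (so $G_2=hG_1h^{-1}$). Both $G_1$ and $G_2$ are finitely generated, since $\Gamma$ is, and both are non-abelian, since a non-elementary Gromov-hyperbolic group contains a non-abelian free subgroup and the $\rho_i$ are faithful; and $G_1$ is non-$C^2$-locally discrete by hypothesis. Hence the conclusion of Theorem~\ref{t:ERthmA} --- that $h$ agrees with an element of $\Diff^\omega_+(\T)$ --- follows as soon as we know that \emph{$G_2$ is non-$C^2$-locally discrete as well}. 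So the entire problem is to prove that, within the class of finitely generated non-abelian subgroups of $\Diff^\omega_+(\T)$, non-$C^2$-local discreteness is preserved by topological conjugacy; this is exactly where the classification obtained here and in \cite{MarkovPartitions1} is used.

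To show $G_2$ is non-$C^2$-locally discrete I would argue by contradiction, assuming $G_2$ to be $C^2$-locally discrete. One first removes the case of a finite orbit: finite orbits are a topological-conjugacy invariant, so $G_1$ and $G_2$ have one simultaneously, and then --- after passing to the finite-index subgroup fixing the orbit pointwise and restricting to a complementary interval --- the interval analogue of the Eskif--Rebelo rigidity applies, the conjugacy $h$ extending analytically across the fixed endpoints because the non-discreteness of $G_1$ produces analytically linearizable fixed points there. So assume there is no finite orbit. Then $G_1$, being non-$C^2$-locally discrete and a fortiori non-$C^1$-locally discrete, has infinite rotation spectrum by Matsuda's theorem \cite{Matsuda2009}; as the rotation number of a circle homeomorphism is a topological-conjugacy invariant, $\rot(G_2)=\rot(G_1)$ is infinite. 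Moreover $G_1$ acts minimally --- a non-$C^2$-locally discrete subgroup of $\Diff^\omega_+(\T)$ with no finite orbit has no exceptional minimal set, by the Nakai-type theory behind \cite{ER2015} (a non-trivial local flow in the $C^2$-closure cannot preserve a Cantor set) --- hence $G_2$ acts minimally too. Now $\Gamma$, being Gromov-hyperbolic, is finitely presented and has uniformly bounded torsion (only finitely many conjugacy classes of finite subgroups), so the two exceptional ``one-ended'' cases of Theorem~\ref{t-state-of-the-art} are ruled out for $G_2\cong\Gamma$; applying the dichotomy of Theorem~\ref{t-state-of-the-art} to the minimal, locally discrete group $G_2$ --- here one must handle the subtle point that Theorem~\ref{t-state-of-the-art} is phrased for $C^1$-locally discrete groups while we only have $C^2$-local discreteness, and bridging this gap, using the finer analysis of \cite{MarkovPartitions1}, is the technical core of the proof --- forces $G_2$ to be either virtually free with $\NE(G_2)\neq\varnothing$, or $C^\omega$-conjugate to a finite central extension of a cocompact Fuchsian group. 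In the first case $\rot(G_2)$ is finite by Corollary~\ref{cor:rot_spectr_charac}; in the second, $\rot(G_2)$ is finite because the rotation spectrum of a finite central extension of a cocompact Fuchsian group is carried by the finitely many conjugacy classes of its elliptic (torsion) elements, plus the fixed contribution of the central extension. Either alternative contradicts the infiniteness of $\rot(G_2)$; hence $G_2$ is non-$C^2$-locally discrete, and Theorem~\ref{t:ERthmA} concludes the proof.

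I expect the main obstacle to be this transfer of non-$C^2$-local discreteness under topological conjugacy, and specifically the reconciliation of the $C^2$-based notion of local discreteness used in Theorem~\ref{t:ERthmA} with the $C^1$-based statements of Theorem~\ref{t-state-of-the-art} and Corollary~\ref{cor:rot_spectr_charac} --- the two notions not being known to coincide (cf.\ Remark~\ref{r:discrete}) --- together with the finite-orbit reduction via an interval version of \cite{ER2015}. The rest --- non-abelianity and finite generation of the images, topological invariance of minimality and of the rotation spectrum, the exclusion of the exceptional cases via Gromov-hyperbolicity of $\Gamma$, and the final appeal to Theorem~\ref{t:ERthmA} --- is routine.
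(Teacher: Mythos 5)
Your reduction has the right first move (appeal to Theorem~\ref{t:ERthmA}), but you then try to prove the wrong intermediate statement, and the gap you flag in the middle of your argument is a genuine one that your route does not close. You want to show that $G_2$ is non-$C^2$-locally discrete, and your contradiction argument assumes $G_2$ is $C^2$-locally discrete and then feeds it into Theorem~\ref{t-state-of-the-art}. But that dichotomy is stated for ($C^1$-)locally discrete groups, and the implication ``$C^2$-locally discrete $\Rightarrow$ $C^1$-locally discrete'' is precisely what is \emph{not} known (the paper records this explicitly just before the corollary, pointing to Remark~\ref{r:discrete}). Gesturing at ``the finer analysis of \cite{MarkovPartitions1}'' does not fill this: nothing in that analysis is claimed to bridge $C^1$- and $C^2$-local discreteness, and if it did, the authors would hardly have flagged the question as open. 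Your additional worries about the finite-orbit reduction and the Fuchsian rotation-spectrum computation are minor and could be patched; the $C^1$/$C^2$ reconciliation is the point that actually breaks.

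The paper avoids the obstruction entirely by not trying to transfer non-$C^2$-local-discreteness to $G_2$. It invokes the intermediate result \cite[Theorem 5.1]{ER2015}, which says that the conclusion of Theorem~\ref{t:ERthmA} already follows once one knows $\NE(G_2)=\varnothing$ (given $G_1$ non-$C^2$-locally discrete). One then argues by contradiction: if $\NE(G_2)\neq\varnothing$, then $G_2$ is ($C^1$-)locally discrete and, since $\Gamma$ is Gromov-hyperbolic and hence finitely presented with bounded torsion, the dichotomy of Theorem~\ref{t-state-of-the-art} forces $G_2$ to be virtually free; Theorem~\ref{t:bpvfreeA} gives a proper ping-pong partition for $G_2$; this partition transports along the topological conjugacy $h$ to give a proper ping-pong partition for $G_1$; and a group with a proper ping-pong partition cannot be non-$C^2$-locally discrete, contradicting the hypothesis. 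The crucial structural difference is that the contradiction is derived on $G_1$ (where one \emph{knows} non-$C^2$-local-discreteness), not on $G_2$ (where one would need the unavailable implication). Note also that the combinatorial object that actually travels along the $C^0$ conjugacy is the ping-pong partition, not the rotation spectrum you use: your rotation-spectrum argument is a nice observation, but it is not sufficient to make the dichotomy applicable to $G_2$, and the transferable invariant the paper relies on is $\NE$ together with the ping-pong partition.
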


\begin{proof}
	If $G_1=\rho_1(\Gamma)$ (and thus $G_2=\rho_2(\Gamma)$) admits a finite orbit, then we can repeat the quite standard argument in \cite{ER2015}. Otherwise, after \cite[Theorem 5.1]{ER2015} (which is an intermediate step for Theorem \ref{t:ERthmA}), it is enough to prove that $\NE(G_2)=\varnothing$. Recall that a Gromov-hyperbolic group is always finitely presented and with bounded torsion \cite[Chapter III.$\Gamma$]{bridson-haefliger}.
	After Theorem~\ref{t-state-of-the-art}, if $\NE(G_2)\neq\varnothing$, the Gromov-hyperbolic group $G_2$ is virtually free and therefore it admits a ping-pong partition by Theorem \ref{t:bpvfreeA}.	
	Therefore $G_1$, being $C^0$ conjugate to $G_2$, also admits a ping-pong partition. This contradicts that $G_1$ is non-$C^2$-locally discrete.
\end{proof}

\subsection{Organization of the paper}

The whole work is based upon the notion of ping-pong partitions for virtually free group actions on the circle.
We start by recalling in Section \ref{s:previous} the setting and the results from the companion work \cite{MarkovPartitions1}. In Section \ref{s:realization} we explain how to realize ping-pong partitions by actions with desired properties, which notably gives the proof of Theorem \ref{t:Dippolito} (realizations by dyadic piecewise linear homemorphisms) and several interesting examples of locally discrete groups of real-analytic diffeomorphisms with minimal actions. In Section \ref{s:dynamics} we study extensively the dynamical properties obtained from the ping-pong partitions defined after the work of Deroin, Kleptsyn, and Navas \cite{DKN2014}, which leads in particular to the proof of the multiconvergence property (Theorem \ref{mthm:multiconvergence}). The text is completed by Appendix \ref{appendix}, where we compare the original construction by Deroin, Kleptsyn and Navas appearing in \cite{DKN2014}, with the construction revisited in the companion work \cite{MarkovPartitions1}. 

\section{Ping-pong partitions for virtually free groups}
\label{s:previous}

As we explained in Section \ref{ssc:locally_discrete}, an important aspect of the classification of locally discrete subgroups $G\subset \Diff_+^\omega(\T)$ is to understand how \emph{virtually free groups} may act. For this, we have introduced
in \cite{MarkovPartitions1} the notion of \emph{ping-pong partition}, which is a semi-conjugacy invariant, depending on a \emph{marking}. We briefly recall the notions appearing in \cite{MarkovPartitions1}. It is a classical result that a finitely generated group is virtually free if and only if it admits a proper isometric action (i.e.\ with finite stabilizers) on a locally finite simplicial tree, with bounded fundamental domain. A \emph{marking} of a virtually free group $G$ is the choice of an orientation-preserving isometric action $\alpha:G\to\Isom_+(X)$ of $G$ on a tree $X$ and a fundamental domain $T\subset X$. Given such a marking, we denote by $\overline X=(V,E)$ the quotient graph of the action, and the choice of $T\subset X$ determines a spanning tree in $\overline X$, with set of oriented edges $E_T\subset E$. We write $S:=E\setminus E_T$. The marking determines finite vertex groups $G_v\subset G$ (for $v\in V$) and edge groups $A_e$ (for $e\in E$) with boundary injections $\alpha_e:A_e\to G_{o(e)}$, $\omega_e:A_e\to G_{t(e)}$ (here $o(e)$ and $t(e)$ denote respectively the origin and target of the oriented edge $e\in E$). This gives the following presentation of the group $G$, as the fundamental group of a graph of groups:
\begin{equation}\label{eq:pres_fundamental_group}
	G\cong \pi_1(\overline X;G_v,A_e)=\sbox0{$G_v,E\, \left\vert \begin{array}{lr}
			\rel (G_v)& \text{for }v\in V,\\
			\overline{e}=e^{-1}&\text{for }e\in E,\\
			e=\id & \text{for }e\in E_T,\\
			e^{-1}\alpha_e(g)e=\omega_e(g)&\text{ for }e\in E, g\in A_e\end{array}\right.$}
	\mathopen{\resizebox{1.3\width}{\ht0}{$\Bigg\langle$}}
	\usebox{0}
	\mathclose{\resizebox{1.3\width}{\ht0}{$\Bigg\rangle$}}.
\end{equation}

\begin{rem}\label{r:mthmC}
	Virtually free groups of circle homeomorphisms are of very special form: vertex groups are finite cyclic, and in fact they are always free-by-finite cyclic (see \cite[Theorem C]{MarkovPartitions1}).
\end{rem}

\begin{dfn}[Interactive family]\label{d:generalized_ping-pong}
	Consider a graph $\overline X=(V,E)$ and let $G=\pi_1(\overline X;G_v,A_e)$ be the fundamental group of a graph of groups.
	Choose a spanning tree $T=(V,E_T)\subset \overline X$, and let $S=E\setminus E_T$ be the collection of oriented edges not in $T$. We denote by $\mathsf{St}_T(v)=\{e\in E_T\mid o(e)=v\}$ the \emph{star} of $v$ in $T$;  given $e\in E_T$ and $v\in V$, we also write $v\in C(e,T)$ if the edge $e$ belongs to the oriented geodesic path in $T$ connecting $o(e)$ to $v$.
	
	Given an action of $G$ on a set $\Omega$, a family of subsets $\boxminus=\{X_v,Z_s\}_{v\in V,s\in S}$ is called an \emph{interactive family} if:
	\begin{enumerate}[(\text{IF} 1)]
		
		\item $X_v$, $Z_s$ (for $v\in V,s\in S$) are pairwise disjoint subsets; the $Z_s$ are non-empty, and if $G_v\neq \alpha_e(A_e)$ for some $e\in E$ such that $o(e)=v$, then $X_v\neq \varnothing$;
		\label{pp1}
		
		\item for every $s\in S$ and $O\in\boxminus\setminus \{Z_{\overline s}\}$, one has $s(O)\subset Z_s$; \label{pp2}
		
		\item  $\alpha_s(A_s)(Z_s)\subset Z_s$ for $s\in S$;\label{pp3}
		
		\item $(G_{o(s)}\setminus \alpha_s(A_s))(Z_s)\subset X_{o(s)}$ for $s\in S$;\label{pp4}
		
		\item for $v\in V$ and $e\in \mathsf{St}_T(v)$ such that $G_v\neq \alpha_e(A_e)$, $X_v$ contains a non-empty $X_v^e$;\label{pp5}
		
		\item $\alpha_e(A_{e})(X_{o(e)}^e)\subset X^e_{o(e)}$ for $e\in E_T$; \label{pp6}
		
		\item if $e\in E_T$, $v\in V$ are such that $v\in C(e,T)$, then $\left (G_{o(e)}\setminus \alpha_e(A_e)\right )(X_{v})\subset X^e_{o(e)}$ (in particular this holds for $v=t(e)$);\label{pp7} 
		
		\item if $e\in E_T, s\in S$ are such that $o(s)\in C(e,T)$, then $\left (G_{o(e)}\setminus \alpha_e(A_e)\right )(Z_s)\subset X^e_{o(e)}$.\label{pp8}
		
	\end{enumerate}
	
	In addition, one says that the interactive family is \emph{proper} if the following holds:
	
	\begin{enumerate}[(\text{IF} 1)]
		\setcounter{enumi}{8}
		
		\item for $s\in S$, the restriction of the action of $\alpha_s(A_s)$ to $Z_s$ is faithful, and similarly, for $e\in E_T$,  the restriction of the action of $\alpha_e(A_e)$ to $X_{o(e)}^e$ is faithful;\label{pp9}
		
		\item if there exists a vertex $v\in V$ such that $X_v\neq\varnothing$, then there exists a (possibly different) vertex $w\in V$ such that the union of all the images from (IF~\ref{pp4},\ref{pp7},\ref{pp8}) inside the corresponding $X_{w}$ misses a point;
		\label{pp10}
		
		\item if $S=\{s,\overline s\}$ and $X_v=\varnothing$ for every $v\in V$, then we require that there exists a point $p\in \Omega\setminus(Z_s\cup Z_{\overline{s}})$ such that $s(p)\in Z_s$ and $\overline s(p)\in Z_{\overline s}$.\label{pp11}
	\end{enumerate}
\end{dfn}

\begin{dfn}\label{d:gaps}
	Let  $\mathcal I$ be a collection of finitely many disjoint open intervals of the circle $\T$. A \emph{gap} of $\mathcal I$ is a connected component of the complement of $\bigcup_{I\in \mathcal I}I$ in $\T$. We denote by $\mathcal J$ the collection of gaps of the partition $\mathcal I$. 
	
	Given a homeomorphism $g:\T\to\T$ and $I\in \mathcal I$, we say that the image $g(I)$ is \emph{$\mathcal{I}$-Markovian} if it coincides with a union of intervals $I_0,\ldots,I_m\in \mathcal I$ and gaps $J_1,\ldots,J_m\in\mathcal J$ (where $m\ge 1$). We will also informally write that $g$ \emph{expands} the interval $I$.
\end{dfn}

\begin{dfn}[Ping-pong partition]\label{d:markov-partition}
	Let $G\subset \Homeo_+(\T)$ be a virtually free group and $(\alpha:G\to\Isom_+(X), T)$ a marking. Let $\cG=\{G_v,\alpha_s(A_s)s\}_{v\in V,s\in S}$ be the preferred system of generators for $G$.
	
	A \emph{ping-pong partition} for $(G,\alpha,T)$ is a collection $\Theta=\{U_v^e,V_s\}_{v\in V, e\in\St_{\overline X}(v), s\in S}$ of open subsets of the circle $\T$, satisfying the following properties.
	\begin{enumerate}[(PPP 1)]
		\item Letting $U_v=\bigcup_{e\in \St_{\overline X}(v)}U_v^e$, the family $\{U_v,V_s\}_{v\in V, s\in S}$ defines an interactive family in the sense of Definition~\ref{d:generalized_ping-pong}, with three additional requirements: for any $v\in V$
		\begin{itemize}
			\item the subsets $U_v^e$, for $e\in\St_T(v)$, are the subsets required for (IF \ref{pp5}),
			\item the subsets $U_v^s$, for $s\in S$ such that $o(s)=v$, are such that $\left (G_v\setminus \alpha_s(A_s)\right )\left (V_s\right )\subset U_v^s$, strengthening (IF \ref{pp4}), and moreover $\alpha_s(A_s)(U_v^s)=U_v^s$,
			\item the subsets $U_v^e$, for  $e\in\St_{\overline X}(v)$, are pairwise disjoint.
		\end{itemize}\label{ppp1}
		\item Every atom of $\Theta$ is the union of finitely many intervals.\label{ppp2}
		\item For every element $g\in \cG$ and every connected component $I$ of some $O\in\Theta$
		\begin{itemize}
			\item either there exists $O'\in\Theta$ such that $g(I)\subset O'$, or
			\item the image $g(I)$ is $\cI$-Markovian, where $\cI$ is the collection of connected components of elements of $\Theta$.
		\end{itemize}\label{ppp3}

	\end{enumerate}	
	In addition, if $\Theta$ defines a proper interactive family, we will say that  the ping-pong partition is \emph{proper}.	
\end{dfn}

\begin{dfn}[Equivalence of partitions]\label{d:equivalence_PPP}
	Let $(G,\alpha,T)$ be a marked virtually free group, with preferred generating set $\cG=\{G_v,\alpha_s(A_s)s\}_{v\in V,s\in S}$ and let $\rho,\rho':G\to\Homeo_+(\T)$ be two representations having interactive families $\Theta=\{U^e_v,V_s\}$, $\Theta'=\{{U^e_v}',V'_s\}$ respectively. Denote by $\cI,\cI'$ the corresponding sets of connected components. A map $\theta:\cI\to\cI'$ is a \emph{ping-pong equivalence} if the following conditions are satisfied:
	\begin{enumerate}[(PPE 1)]
		\item $\theta$ is a bijection which preserves the cyclic ordering of the intervals;\label{ppe1}
		\item $\theta$ preserves the inclusions relations of the two ping-pong partitions:
		\begin{itemize}
			\item if  $g\in \cG,I_1, I_2\in \cI$ are such that $\rho(g)(I_1)\subset I_2$, then $\rho'(g)(\theta(I_1))\subset \theta(I_2)$, and, in case of equality, equality is preserved,
			\item whereas if $g\in \cG$ and $I\in \cI$ are such that $\rho(g)(I)$ is $\mathcal I$-Markovian, union of intervals $I_0,\ldots,I_m\in\cI$ and gaps $J_1,\ldots, J_m\in\cJ$, then $\rho'(g)(\theta(I))$ is $\cI'$-Markovian, union of the intervals $\theta(I_0),\ldots,\theta(I_m)\in\cI'$ and gaps $\theta(J_1),\ldots, \theta(J_m)\in\cJ$.
		\end{itemize} \label{ppe3}
	\end{enumerate}
\end{dfn}

\begin{dfn}[Semi-conjugacy]
	Let $\rho,\rho':G\to \Homeo_+(\T)$ be two representations. They are \emph{semi-conjugate} if the following holds: there exist
	\begin{itemize}
		\item a monotone non-decreasing map $h:\R\to \R$ commuting with the integer translations and
		\item two corresponding central lifts $\widehat{\rho},\widehat \rho':\widehat{G}\to \Homeo_{\Z}(\R)$  to homeomorphisms of the real line commuting with integer translations,
	\end{itemize}
	such that
	\[
	h\,\widehat{\rho}(\widehat{g})=\widehat{\rho}'(\widehat{g})\,h,\quad\text{for any }\widehat{g}\in \widehat{G}.
	\]
\end{dfn}

We can now recall the main results of \cite{MarkovPartitions1}.

\setcounter{thmA}{0}
\begin{thmA}\label{t:bpvfreeA}
	Let $G\subset \Diff^\omega_+(\T)$ be a locally discrete, virtually free group of real-analytic circle diffeomorphisms. For any marking $(\alpha:G\to \Isom_+(X),T)$, there  exists a proper  ping-pong partition for the action of $G$ on $\T$ (in the sense of Definition~\ref{d:markov-partition}).
\end{thmA}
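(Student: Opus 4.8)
Since this is the central construction of the companion work \cite{MarkovPartitions1}, I only outline the strategy; the dynamical engine is the theory of Deroin, Kleptsyn and Navas \cite{DKN2014} for locally discrete groups. One first reduces to the case where the $G$-action on $\T$ has no finite orbit: if it does, $G$ is semi-conjugate to a minimal-model action, the partition in that situation is produced by a soft, classical argument, and the general case follows by pulling back along the semi-conjugacy. So assume $G$ acts with minimal set $\Lambda$ equal either to $\T$ or to a Cantor set. The key input is that local discreteness makes the action \emph{expandable}: the set $\NE(G)$ of non-expandable points (those $x$ with $g'(x)\le 1$ for all $g\in G$) is finite, and there are $\lambda>1$ and a neighbourhood of $\T\setminus\NE(G)$ on which some generator expands every small interval by a definite factor. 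Real-analyticity is used here and throughout to guarantee that this expansion is of hyperbolic type and to control fixed-point sets, keeping all the finite bookkeeping below genuinely finite.

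The combinatorial skeleton is provided by the marking. The tree $X$ with fundamental domain $T$ gives the graph of groups $\overline X=(V,E)$: each vertex $v$ carries a finite cyclic group $G_v$, and the non-tree edges $S=E\setminus E_T$ give the ``free'', Schottky-type directions. The plan is to produce, for each $s\in S$, an open set $V_s$ playing the role of the attracting region of $\alpha_s(A_s)s$, so that $s$ maps every atom other than $Z_{\overline s}$ into $V_s$ (this is (IF~\ref{pp2})), while $\alpha_s(A_s)$ preserves $V_s$ (IF~\ref{pp3}) and the non-$A_s$ part of $G_{o(s)}$ throws $V_s$ into $U_{o(s)}^s$ (IF~\ref{pp4}); and, for each vertex $v$, an open set $U_v=\bigcup_{e\in \St_{\overline X}(v)}U_v^e$ on which $G_v$ acts and which is subdivided so that each piece $U_v^e$ is invariant under $\alpha_e(A_e)$ (IF~\ref{pp6}) and absorbs the images coming from the part of the tree beyond $e$ (IF~\ref{pp7}, IF~\ref{pp8}). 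Concretely, one anchors the endpoints of all these regions at points of the $G$-orbit of $\NE(G)$, taking them nested and small enough that the interactive-family inclusions (IF~\ref{pp1})--(IF~\ref{pp8}) hold; the expansion property is what pushes images of atoms onto their intended targets, and the tree geodesics in $T$ dictate which target.

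The genuinely delicate point --- and the one I expect to be the main obstacle --- is the Markov condition (PPP~\ref{ppp3}): for every generator $g$ and every connected component $I$ of an atom, $g(I)$ must be \emph{exactly} either contained in a single atom or a union of atoms and gaps, with no partial overlaps permitted. This forces the finite set of endpoints of the partition to be essentially invariant under the generators, so the $V_s$ and $U_v^e$ cannot be arbitrary neighbourhoods: their endpoints must be chosen inside a carefully engineered finite subset of the orbit of $\NE(G)$, and one typically has to subdivide the atoms a bounded number of additional times --- the expansion guaranteeing that this process terminates, which is exactly where analyticity is essential --- before the Markov incidences close up simultaneously for all generators and consistently with the tree combinatorics. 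Once this is achieved, properness is comparatively routine: faithfulness of $\alpha_s(A_s)$ on $V_s$ and of $\alpha_e(A_e)$ on $U_{o(e)}^e$ (IF~\ref{pp9}) holds because these finite groups act there with hyperbolic-type fixed points; the ``missing point'' requirements (IF~\ref{pp10}), (IF~\ref{pp11}) hold because $\Lambda$ is not contained in a single atom and, in the rank-one free case $S=\{s,\overline s\}$ with all $X_v=\varnothing$, one exhibits the required point $p$ directly from the north--south dynamics of $s$. Assembling these pieces yields the proper ping-pong partition.
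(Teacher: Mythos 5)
The present paper does not prove Theorem~\ref{t:bpvfreeA}: it is one of the main results of the companion work \cite{MarkovPartitions1}, and is merely recalled in Section~\ref{s:previous}. What this paper does contain is a recap of the free-group case in \S\ref{ss.Free_DKN} and Appendix~\ref{appendix}, so that is the only material against which your sketch can be checked.

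Your high-level framing is consistent with the paper's: Deroin--Kleptsyn--Navas \cite{DKN2014} is the engine, $\NE(G)$ and the associated expansion property drive the ping-pong, and the graph of groups supplies the combinatorial scaffolding. But the construction you propose is not the one the paper (or \cite{DKN2014}) uses, and the divergence is substantive. You suggest anchoring the endpoints of the atoms in the $G$-orbit of $\NE(G)$ and then subdividing iteratively until the Markov incidences of (PPP~\ref{ppp3}) close up, with termination supplied by expansion. The actual construction involves no choice of endpoints and no subdivision at all: for a free generator $s$, one takes $W_s$ to be the set of reduced words beginning with $s$ as in \eqref{eq:Wsets}, and defines $U_s$ intrinsically as the set of points $x$ admitting a neighbourhood $I_x$ with $\sup_{g\notin W_s,\,\|g\|\ge n}|g(I_x)|\to 0$, as in \eqref{eq:Usets}. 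Theorem~\ref{t:DKNfree} then delivers openness, finitely many connected components, the ping-pong inclusions $s(U_t)\subset U_{s^{-1}}$, and coverage of $\Lambda$ up to a finite set, all as outputs of DKN's distortion control and first-return structure theory (Lemma~\ref{l:first return prop}); the Markovian behaviour is \emph{inherited} from this structure, not engineered. The only post-processing is the trimming of $U_s$ to $\hat U_s$ near gaps of $\Lambda$. Your termination claim --- ``the expansion guaranteeing that this process terminates'' --- is left unjustified and is precisely the kind of argument the DKN formalism is designed to make unnecessary: expansion alone does not bound the number of refinements needed to make an ad hoc partition Markov, especially in the presence of parabolic-type points of $\NE(G)$. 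The genuinely virtually free layer (building the $U_v^e$ and $V_s$ compatible with the tree and verifying (IF~\ref{pp1})--(IF~\ref{pp11})) is carried out in \cite{MarkovPartitions1} and is not reproduced here; your outline of that layer and of properness is plausible in spirit, but cannot be verified against the present text.
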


\begin{thmA}\label{t:bpvfreeB}
	Let $\rho,\rho':(G,\alpha,T)\to \Homeo_+(\T)$ be two  representations of a virtually free group with a marked action $\alpha$ on a tree. Suppose that the actions on $\T$ have equivalent proper ping-pong partitions (in the sense of Definition~\ref{d:equivalence_PPP}). Then the actions are semi-conjugate.
\end{thmA}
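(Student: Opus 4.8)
The plan is to distill from a proper ping-pong partition, together with its marking, a purely \emph{combinatorial model} of the action, and to observe that ping-pong equivalence in the sense of Definition~\ref{d:equivalence_PPP} means precisely that the two representations $\rho_1$ and $\rho_2$ carry the \emph{same} model. From this I would produce an auxiliary representation $\rho_0$ realizing the model together with monotone non-decreasing degree-one circle maps $h_\nu$ ($\nu=1,2$) collapsing $\rho_\nu$ down to $\rho_0$, and read off the semi-conjugacy between $\rho_1$ and $\rho_2$ (directly through $\rho_0$; in the one-sided form of the definition as soon as one of the two representations already agrees with $\rho_0$, e.g.\ when it is minimal). The construction proceeds in four steps: (i) build a symbolic space $\partial$ of admissible infinite reduced words, carrying a cyclic order and a $G$-action, and show it depends only on the ping-pong equivalence class; (ii) for each $\nu$ build an equivariant monotone coding map $\pi_\nu\colon\partial\to\T$ with image the minimal set of $\rho_\nu(G)$; (iii) assemble the model action $\rho_0$ and the collapses $h_\nu$; (iv) verify the intertwining at the level of central lifts.

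For step (i): starting from the marking $(\alpha\colon G\to\Isom_+(X),T)$ and the preferred generating set $\cG=\{G_v,\alpha_s(A_s)s\}$, I would consider the infinite words $g_1g_2\cdots$ in $\cG$ that are reduced for the graph-of-groups presentation~\eqref{eq:pres_fundamental_group}, calling such a word \emph{admissible} when, reading (PPP~\ref{ppp3}) stage by stage, it selects a well-defined infinite nested chain of connected components of atoms of the partition --- at stage $n{+}1$ the selected component is one of the pieces into which $g_n$ sends, or Markov-expands, the component selected at stage $n$ (here $\cI$ is finite by (PPP~\ref{ppp2})). Let $\partial$ be the resulting compact space, with the cyclic order inherited combinatorially from the cyclic order of $\cI$ and with the action of $G$ given by prepending a generator and reducing. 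The key point is that, by (PPE~\ref{ppe1})--(PPE~\ref{ppe3}), this whole combinatorial datum --- the alphabet, which generator maps which component into which, which generator Markov-expands which component onto which ordered tuple of components and gaps, and the cyclic order of all of these --- is transported verbatim by the equivalence $\theta$ from $\Theta$ to $\Theta'$; hence $\partial$, its cyclic order and its $G$-action are literally the same object for $\nu=1$ and $\nu=2$. (When $G$ is virtually cyclic, $\partial$ reduces to two points, the configuration isolated in (IF~\ref{pp11}); this case is disposed of separately by an elementary rotation-number argument.)

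For step (ii), send an admissible word $w=g_1g_2\cdots$ to the intersection $\pi_\nu(w)=\bigcap_n\overline{\rho_\nu(g_1\cdots g_n)(I_n)}$ of the closures of the components it selects; this is a non-empty closed arc --- reduced to a point outside a countable set of exceptional words --- since the arcs are nested, which is exactly the content of the ping-pong inclusions (IF~\ref{pp2}), (IF~\ref{pp6}) and the reinforcements in (PPP~\ref{ppp1}). The standard ping-pong analysis then gives that $\pi_\nu$ is monotone for the cyclic orders, that $\pi_\nu(g\cdot w)=\rho_\nu(g)(\pi_\nu(w))$, and that its image is the unique minimal set $\Lambda_\nu$ of $\rho_\nu(G)$, the various $\pi_\nu(w)$ having pairwise disjoint interiors; here properness --- the faithfulness clause (IF~\ref{pp9}) and the missing-point clauses (IF~\ref{pp10})--(IF~\ref{pp11}) --- is what prevents the coding from degenerating and forces the whole pattern of coincidences $\pi_\nu(w)\cap\pi_\nu(w')\neq\varnothing$, as well as the way the complementary wandering intervals are attached to $\Lambda_\nu$, to be purely combinatorial, hence the same for $\nu=1$ and $\nu=2$. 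For step (iii), let $\rho_0$ be the action induced on the model circle $\T_0$ obtained from $\partial$ (with its cyclic order) by the canonical combinatorial identifications; it is minimal by construction, and the matching above produces cyclic-order-preserving collapsing maps $\T_\nu\to\T_0$. For step (iv), lift these to monotone non-decreasing $h_\nu\colon\R\to\R$ commuting with integer translations, fix compatible central lifts $\widehat{\rho}_\nu,\widehat{\rho}_0\colon\widehat{G}\to\Homeo_{\Z}(\R)$ of the generators in $\cG$ --- a bookkeeping of the Euler cocycle along $\cG$ that is consistent precisely because $\theta$ preserves the combinatorics, and in particular the rotation numbers, of the generators --- and verify $h_\nu\,\widehat{\rho}_\nu(\widehat{g})=\widehat{\rho}_0(\widehat{g})\,h_\nu$ for $\widehat{g}$ ranging over $\cG$: modulo $\Z$ this holds on $\Lambda_\nu$ by equivariance of $\pi_\nu$ together with $h_\nu\circ\pi_\nu=\pi_0$, it holds on the complementary wandering intervals because $h_\nu$ collapses each of them while $\rho_\nu(g)$ permutes them exactly as $\rho_0(g)$ permutes the corresponding points of $\T_0$ --- this is the ``$g(I)\subset O'$'' alternative in (PPP~\ref{ppp3})/(PPE~\ref{ppe3}) --- and it propagates to all of $\R$ by monotonicity. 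Chaining $h_1$ and $h_2$ then yields the desired semi-conjugacy between $\rho_1$ and $\rho_2$.

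The step I expect to be the real obstacle is the assertion, underlying (ii)--(iii), that the combinatorial model genuinely captures everything: that \emph{every} identification among coded points and \emph{every} complementary wandering interval is forced by the data common to $\Theta$ and $\Theta'$, so that nothing in the model is allowed to depend on $\nu$. This is exactly where properness of the ping-pong partition cannot be dispensed with --- without the faithfulness and missing-point conditions the model could collapse further in one representation than in the other, and the two actions would simply fail to be semi-conjugate. A second, more clerical, difficulty is to upgrade the monotone circle maps to the stated identities over the central extension $\widehat{G}$, i.e.\ to obtain an honest intertwining rather than one valid merely up to rotations, which forces one to carry the Euler cocycle along $\cG$ throughout the argument.
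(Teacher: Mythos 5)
This theorem is Theorem~B of the companion paper \cite{MarkovPartitions1}; the present paper merely \emph{recalls} it in Section~2 and never proves it, so there is no ``paper's own proof'' here against which your attempt can be compared line by line. What this paper does give away about the companion's argument are a few structural hints: in the proof of Lemma~\ref{l_gap_neutral} and Lemma~\ref{l_extrem_lambda} the authors reference iterated refinements $\Theta_k$ (``\S6.3'' and ``\S6.5'' of \cite{MarkovPartitions1}) and the set $\Delta_\infty=\bigcup_{k}\Delta_k=\bigcup_k\cG^k(\Delta_0)$ of endpoints generated by pushing the partition forward, together with the statement ``\,$\Delta_\infty$ accumulates on $\Lambda$\,'' credited to the ``Proof of Theorem B''. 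This strongly suggests that the companion's proof is organized around iterated refinements of the partition and the combinatorics of $\Delta_\infty$.

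Your approach is the inverse-limit reformulation of that same idea: your space $\partial$ of admissible infinite words is, up to notation, the projective limit of the refinements $\Theta_k$, and your coding maps $\pi_\nu$ are the maps that, in the refinement picture, send a nested chain of intervals of $\Delta_k$ to its intersection. So the two approaches are essentially the same argument presented in dual form --- a forward iteration of the partition versus a backward coding of points by words --- and the trade-off is mostly aesthetic: the refinement picture keeps everything inside $\T$ and makes ``\,$\Delta_\infty$ accumulates on $\Lambda$\,'' the key lemma, while your coding picture builds an abstract model circle and makes equivariance of $\pi_\nu$ the key lemma. Both are standard and both work.

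The one place where I would push you is the claim buried in steps (ii)--(iii), which you yourself flag: that \emph{every} identification $\pi_\nu(w)\cap\pi_\nu(w')\neq\varnothing$ and \emph{every} gap of $\Lambda_\nu$ is forced by the combinatorics shared by $\Theta$ and $\Theta'$, i.e.\ that the pattern of degeneracies of the coding map is independent of $\nu$. This is genuinely the whole theorem, and ``standard ping-pong analysis'' does not dispose of it: you must actually prove that two admissible words code the same point of $\Lambda_\nu$ if and only if they satisfy a combinatorial relation readable off $\Theta$ (a confluence/Markov-expansion relation), and that is precisely where the properness hypotheses (IF~\ref{pp9})--(IF~\ref{pp11}) and the strengthening in (PPP~\ref{ppp1}) enter --- without them the statement is simply false, as one partition could be ``collapsed more'' than the other. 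Your outline names the right ingredients but does not carry out this identification argument, which is the substantive content. A second, smaller point: asserting that $\rho_0$ is ``minimal by construction'' deserves justification --- minimality of the model is not automatic and in fact is not needed for the theorem; the semi-conjugacy as defined in the paper only requires a monotone degree-one intertwiner, and it is cleaner to avoid the minimality claim and instead chain the two collapsing maps $h_1,h_2$ directly into a monotone map $h$ with $h\widehat{\rho}_1=\widehat{\rho}_2 h$ (possibly swapping the roles of $1$ and $2$), as the definition allows one-sidedness. Finally, your remark about the virtually cyclic / two-point-boundary case being handled separately is correct and is a genuine case distinction one must make.
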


\section{Realization}\label{s:realization}

In this section we discuss the problem of realization. That is, given a ping-pong partition for a marked virtually free group $(G,\alpha,T)$ of circle homeomorphisms, is it possible to find a new action of $(G,\alpha,T)$ with equivalent ping-pong partition, but of a given desired regularity? We will be particularly interested in the case of piecewise linear regularity (Theorem \ref{t:Dippolito}, related to Dippolito's conjecture) and real-analytic regularity (for the examples in Section \ref{s:exotic}).

Some of the results are obtained by performing small perturbations of the original action. For this, we recall that the group $\Homeo_+(\T)$ is a topological group when endowed with the topology of uniform convergence. This topology is determined by the uniform distance: given $f$ and $g$ in $\Homeo_+(\T)$, define $d_{C^0}(f,g)=\sup_{x\in \T}d_{\T}(f(x),g(x))$, where 
$d_{\T}$ denotes the distance on the circle on $\T\cong \R/\Z$ induced by the Euclidean distance on the line (so $d_{\T}(x,y)$ is the length of the shortest connected component of $\T\setminus \{x,y\}$).
This also defines a topology on the space of representations of a group $G$ in $\Homeo_+(\T)$: two representations $\rho_1$ and $\rho_2$ in $\mathsf{Hom}(G,\Homeo_+(\T))$ are $\eps$-close if there exists a symmetric generating system $\cG$ of $G$ such that $\sup_{s\in \cG}d_{C^0}(\rho_1(s),\rho_2(s))\le \eps$.

When two points $x,y\in \T$ are sufficiently close, we can use a Euclidean chart to measure their distance, and we will often write $|x-y|=d_{\T}(x,y)$, where $x-y$ denotes the usual operation in any Euclidean chart.

\subsection{Combinatorial realization}

\begin{dfn}\label{d.combinatorial}
	Let $D_0\subset \T$ be a dense subset. A subgroup $G_0\subset \Homeo_+(\T)$ is \emph{$D_0$-combinatorial} if it satisfies the following conditions.
	\begin{enumerate}[(C 1)]
		\item \label{invariance} $D_0$ is $G_0$-invariant.
		
		\item \label{interpolation} For any $n\in\N$ and any two $n$-tuples
		$(x_1,\ldots,x_n)$ and $(y_1,\ldots,y_n)$ of circularly ordered
		points of $D_0$, there exists an element of $G_0$ such
		that $g(x_i)=y_i$ for all $i\in\{1,\ldots,n\}$.
		\item \label{order} For any $q\in \mathbb{N}$, the subgroup $G_0$ contains an element of order $q$.
		\item \label{conjugation} For any two elements $a$ and $b\in G_0$ of finite order $q\in\N$ and same rotation number,
		and for any finite subset $E\subset D_0$ such that $a^k\restriction_E=b^k\restriction_E$ for
		every $k\in\{1 ,\ldots,q-1\}$, there exists an element $h \in G_0$ such that $b=hah^{-1}$ and
		$h\restriction_E=\mathrm{id}\restriction_E$.
		
	\end{enumerate}
	When $D_0=\T$ we simply say that $G_0$ is \emph{combinatorial}.
\end{dfn}

\begin{ex}\label{e:piecewiseaffine} Let $D_0=\T$ and let $G_0=\PL_+(\T)$ be the group of  orientation-preserving,
	piecewise linear homeomorphisms of the circle. We claim that $\PL_+(\T)$ is combinatorial.
	Properties~(C \ref{invariance}--\ref{order})
	are clearly satisfied. To establish condition~(C \ref{conjugation}), let $a$ and $b$ be as in Definition \ref{d.combinatorial}. Up to replace $a$ and $b$ by appropriate powers, we can assume $\rot(a)=\rot(b)=1/q$.
	Fix a point $x_0\in E$ and consider the 
	interval $I_0:=[x_0,a(x_0))=[x_0,b(x_0))\subset \T$, which is a fundamental domain for both $a$ and $b$. We have that $a^k(I)=b^k(I)=:I_k$ for every $k\in\{0,\ldots, q-1\}$.
	Consider the map $h:\T\to \T$, defined by
	\[
	h(x):=b^ka^{-k}(x)\quad\text{for }x\in I_k.
	\]
	It follows that $ha=bh$ and, by assumption, we have $h(x)=x$ for every $x\in E$. In particular, $h$ preserves every interval $I_k$, on which it is defined by the piecewise linear map $b^ka^{-k}$, so $h\in \PL_+(\T)$.
	This proves that the group of orientation-preserving,  piecewise linear  homeomorphisms of the circle is combinatorial.
\end{ex}

We need to slightly improve the previous example, passing from the uncountable group $\PL_+(\T)$ to a countable (even finitely presented!) subgroup.

\begin{dfn}\label{d:Thompson}
	A \emph{dyadic rational} (or \emph{dyadic rational number}) is a point of $\Z[\tfrac12]/\Z\subset \R/\Z\cong \T$,  that is, a point of the form $p/2^q\pmod \Z$, where $p,q\in\N$. 		
	\emph{Thompson's group $T$} is the group of all \emph{dyadic} piecewise linear homemomorphisms, that is, homeomorphisms of the circle $\R/\Z\cong \T$ which are locally of the form $x\mapsto 2^kx+p/2^q$, for $k,p\in \Z$ and $q\in\N$, with finitely many discontinuity points for the derivative, all at dyadic rationals.
\end{dfn}

See \cite{CFP,BieriStrebel} as standard references on Thompson's groups.

\begin{lem}\label{e:Thompson}
	Thompson's group $T$ is $\Z[\tfrac12]/\Z$-combinatorial.
\end{lem}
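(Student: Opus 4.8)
The plan is to verify the four conditions of Definition~\ref{d.combinatorial} for $G_0 = T$ and $D_0 = \Z[\tfrac12]/\Z$, most of them by adapting the arguments already used for $\PL_+(\T)$ in Example~\ref{e:piecewiseaffine}, while being careful that all constructions stay inside the dyadic world. Condition (C~\ref{invariance}) is immediate: an element of $T$ is locally of the form $x\mapsto 2^k x + p/2^q$, which maps dyadic rationals to dyadic rationals, and the same holds for the inverse. Condition (C~\ref{order}) is handled by exhibiting, for each $q$, an explicit finite-order element: the dyadic rotation $x\mapsto x + 1/2^{\lceil \log_2 q\rceil}$ has order a power of $2$, so for general $q$ one instead builds a piecewise-linear homeomorphism cyclically permuting $q$ intervals $[i/q', (i+1)/q')$ with dyadic breakpoints chosen so that each branch has slope a power of $2$ and dyadic intercept; since $T$ acts transitively enough (see below) one can conjugate a combinatorial $q$-cycle on dyadic points into such an element. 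In fact the cleanest route is to first prove (C~\ref{interpolation}) and then deduce (C~\ref{order}) from it together with the existence of at least one element of each order.

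The heart of the matter is (C~\ref{interpolation}): given two circularly ordered $n$-tuples of dyadic points, find $g\in T$ with $g(x_i)=y_i$. This is a classical fact about Thompson's group $T$ — it acts transitively on cyclically ordered $n$-tuples of dyadic rationals (indeed on the ``dyadic'' configuration space) — and the standard proof goes branch by branch: on each arc $[x_i, x_{i+1}]$ between consecutive marked points one must produce a dyadic PL homeomorphism onto $[y_i, y_{i+1}]$ fixing the endpoints in the appropriate sense; this reduces to the well-known statement that for any two dyadic subdivisions of a dyadic interval with the same number of pieces there is a dyadic PL homeomorphism matching them, which is proved by a subdivision/tree argument (refine both sides to a common ``caret'' pattern). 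I would cite \cite{CFP} for this and include only a brief indication. Condition (C~\ref{conjugation}) then follows exactly as in Example~\ref{e:piecewiseaffine}: replacing $a,b$ by powers we may assume $\rot(a)=\rot(b)=1/q$ with $q$ a power of $2$ is \emph{not} automatic, so one keeps general $q$; pick $x_0 \in E$, set $I_0 = [x_0, a(x_0)) = [x_0, b(x_0))$, which is a fundamental domain for both, and define $h(x) = b^k a^{-k}(x)$ for $x \in I_k := a^k(I_0)$. One checks $ha = bh$, that $h$ fixes $E$ pointwise by the hypothesis $a^k\restriction_E = b^k\restriction_E$, and — the one genuinely new point over the $\PL_+$ case — that $h \in T$: on each $I_k$ the map $h$ is a composition of elements of $T$ hence a dyadic PL homeomorphism, and one must verify the breakpoints of $h$ occur at dyadic rationals and the one-sided slopes at $x_0$ (and its images) match up; this holds because $a(x_0), b(x_0)$ are dyadic and $a,b \in T$.

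The remaining clause of (C~\ref{conjugation}) is the quantitative one: if moreover $d_{C^0}(a,b) < \delta$ then $h$ can be taken $\eps$-close to the identity. Here I would argue that with the explicit formula $h\restriction_{I_k} = b^k a^{-k}$, one has $d_{C^0}(h,\id) \le \max_{0\le k\le q-1} d_{C^0}(b^k a^{-k}, \id)$, and $d_{C^0}(b^k a^{-k},\id)$ is controlled by $k\cdot d_{C^0}(a,b)$ times a Lipschitz-type constant depending on $a$ (or uniformly, if one works on the compact family at hand), so choosing $\delta = \delta(\eps, q, a)$ small suffices; since $q$ is bounded in any application one gets the clean statement. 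I expect the main obstacle to be not any single deep step but the bookkeeping in (C~\ref{interpolation}) — ensuring the matching homeomorphism on each arc is genuinely \emph{dyadic} PL and not merely rational PL — together with checking dyadicity of the slopes and breakpoints of $h$ in (C~\ref{conjugation}); both are routine but require invoking the normal-form/tree description of $T$ from \cite{CFP} rather than the soft argument that sufficed for $\PL_+(\T)$.
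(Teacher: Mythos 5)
Your proposal is correct and follows essentially the same route as the paper: verify (C~1)--(C~4) directly, reusing the piecewise formula $h\restriction_{I_k}=b^ka^{-k}$ from Example~\ref{e:piecewiseaffine} for (C~\ref{conjugation}), with the added observation that $h$ lands in $T$ because its breakpoints are dyadic. Three small points deserve correction, though. In (C~\ref{order}), the intended construction is a partition of $\T$ into $q$ \emph{standard dyadic} intervals of lengths $2^{-a_0},\dots,2^{-a_{q-1}}$, cyclically permuted by the linear maps of slope $2^{a_j-a_{j+1}}$; your notation $[i/q',(i+1)/q')$ suggests intervals of equal length $1/q'$, which are not dyadic for general $q$. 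Also, your suggested ``cleanest route'' of deducing (C~\ref{order}) from (C~\ref{interpolation}) ``together with the existence of at least one element of each order'' is circular --- that existence \emph{is} (C~\ref{order}) --- so you should drop it and keep only the direct construction. In (C~\ref{conjugation}), you ask that the one-sided \emph{slopes} of $h$ at the images of $x_0$ match; membership in $T$ neither requires nor generally yields this (PL maps are allowed breakpoints). What must be checked is that the one-sided \emph{values} agree (continuity across $a^k(x_0)=b^k(x_0)$, which follows from $a\restriction_E=b\restriction_E$) and that every breakpoint of $h$ lies in $\Z[\tfrac12]/\Z$, which follows from $x_0\in D_0$ together with $a,b\in T$ and (C~\ref{invariance}). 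Finally, for (C~\ref{interpolation}) the paper supplements the citation with an explicit dyadic-subdivision algorithm (split the longer image arc into equal pieces of length $2^{-\ell}$ and match it with a geometric sequence of dyadic lengths on the shorter side), as this explicitness is convenient later; your appeal to \cite{CFP} is adequate for the lemma itself but does not provide that.
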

\begin{proof}
	Condition~(C \ref{invariance}) follows from the definition of $T$, condition~(C \ref{interpolation}) is classical (see for instance \cite[\S A5.3]{BieriStrebel}), and so is condition~(C \ref{order}) (see \cite[Proposition III.2.1]{ghys-sergiescu}). Finally, the same argument provided in Example \ref{e:piecewiseaffine} shows condition~(C \ref{conjugation}).
	
	We detail a different proof of condition~(C \ref{interpolation}). Given $n$-tuples $(x_1,\ldots,x_n)$ and $(y_1,\ldots,y_n)$ as in (C \ref{interpolation}), we just need to find a dyadic piecewise linear homeomorphism $f_i:[x_i,x_{i+1}]\to [y_i,y_{i+1}]$ for every $i\in \{1,\ldots,n\}$ (set $x_{n+1}=x_1$ and $y_{n+1}=y_1$); we will then consider the dyadic piecewise linear map $g$ such that $g\restriction_{[x_i,x_{i+1}]}=f_i$ for every $i\in \{1,\ldots,n\}$.
	For this, it suffices to exhibit partitions $[x_i,x_{i+1}]=I_1\cup \dots \cup I_r$ and $[y_i,y_{i+1}]=J_1\cup \dots \cup
	J_r$ by dyadic intervals (that is, with dyadic rational endpoints), so that $|J_j| / |I_j|$ is a power of $2$ for every $j\in \{1,\ldots,r\}$; we will then declare $f_i$ to be the unique piecewise linear homeomorphism which maps $I_j$ to $J_j$ linearly for all $j\in \{1,\ldots,k\}$. 
	
	Note that there exist $m,k \in \mathbb{N}$ and $\ell\in\mathbb{Z}$ such that $|[x_i,x_{i+1}]|=\frac{m}{2^\ell}$ and $|[y_i,y_{i+1}]|=\frac{k}{2^\ell}$. Without loss of generality (considering the inverse map if necessary), we can assume $m\leq k$. 
	Choose $r=k$, let $J_1,\ldots,J_k$ be the partition of $[y_i,y_{i+1}]$ by $k$ subintervals of even size $\frac{1}{2^\ell}$ and let $I_1,\ldots, I_{k}$ be consecutive subintervals of $[x_i,x_{i+1}]$ determined by the conditions:
	\[
	|I_j|=\left\{
	\begin{array}{lr}
		\dfrac{1}{2^\ell} &  \text{for }j\in \{1,\ldots,m-1\},\\[1em]
		\dfrac{1}{2^{\ell+j-(m-1)}} & \text{for }j\in \{m,\ldots,k-1\},\\[1em]
		\dfrac{1}{2^{\ell+k-m}} & \text{for }j=k.
	\end{array}
	\right.
	\]
	This defines the map $f_i$ and hence the desired element $g\in T$.
	%
	%
It follows that Thompson's group $T$ is $\Z[\tfrac12]/\Z$-combinatorial.
\end{proof}

The following result is quite standard (see e.g.\ \cite[Corollary A5.8]{BieriStrebel} for a proof for many groups of piecewise linear homeomorphisms).

\begin{lem}\label{l.combinatorial_dense}
Let $D_0\subset \T$ be a dense subset and let $G_0\subset\Homeo_+(\T)$ be a $D_0$-combinatorial group of circle homeomorphisms. Then $G_0$ is $C^0$-dense in $\Homeo_+(\T)$.
\end{lem}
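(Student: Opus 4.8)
The plan is to show that given any $f\in\Homeo_+(\T)$ and any $\eps>0$, there is $g\in G_0$ with $d_{C^0}(f,g)<\eps$. First I would fix a large $N\in\N$ with $1/N<\eps/2$ and choose circularly ordered points $x_1<x_2<\dots<x_N$ on $\T$ so that consecutive points are at distance $<\eps/2$; since $D_0$ is dense, I can take all the $x_i$ to lie in $D_0$ (after a small adjustment which does not change the spacing estimate). Set $y_i:=f(x_i)$; these are again circularly ordered, but they need not lie in $D_0$. So the next step is to replace each $y_i$ by a point $y_i'\in D_0$ with $|y_i'-y_i|$ small, chosen so that the circular order of $(y_1',\dots,y_N')$ is still the same as that of $(x_1,\dots,x_N)$ — possible because the $y_i$ are distinct and $D_0$ is dense, so one only needs the perturbation smaller than half the minimal gap between consecutive $y_i$, and also smaller than $\eps/2$.

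Having arranged $(x_1,\dots,x_N)$ and $(y_1',\dots,y_N')$ as two circularly ordered $N$-tuples of points of $D_0$, I would invoke property (C \ref{interpolation}) of the $D_0$-combinatorial group $G_0$ to produce $g\in G_0$ with $g(x_i)=y_i'$ for all $i$. It remains to estimate $d_{C^0}(f,g)$. The idea is the standard monotonicity squeeze: for any $x\in\T$, $x$ lies in some arc $[x_i,x_{i+1}]$; since both $f$ and $g$ are orientation-preserving homeomorphisms, $f(x)$ lies in the arc $[y_i,y_{i+1}]$ and $g(x)$ lies in the arc $[y_i',y_{i+1}']$, and these two arcs are within $\eps/2$ of each other (endpoints moved by $<\eps/2$) and each has length at most (length of $[y_i,y_{i+1}]$) $+\eps$. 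The only subtlety is that the arcs $[y_i,y_{i+1}]$ need not be short — $f$ could expand $[x_i,x_{i+1}]$ a lot. To fix this I would, before choosing $N$, also use uniform continuity of $f^{-1}$: choose the mesh of the $x_i$ fine enough that each arc $f([x_i,x_{i+1}])=[y_i,y_{i+1}]$ has length $<\eps/2$ as well. Then for $x\in[x_i,x_{i+1}]$ both $f(x)$ and $g(x)$ lie in a common arc of length $<\eps$, giving $d_{C^0}(f,g)<\eps$, as desired. Finally, since $\Homeo_+(\T)$ is connected, the closure of $G_0$ being all of it is the claimed $C^0$-density.

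The step I expect to need the most care is the bookkeeping that guarantees the perturbed target tuple $(y_1',\dots,y_N')$ keeps exactly the same cyclic order as $(x_1,\dots,x_N)$ while simultaneously all perturbations are $<\eps/2$: this forces choosing $N$ (hence the $y_i$) first, reading off $\delta_0:=\tfrac12\min_i d(y_i,y_{i+1})>0$, and only then perturbing within $\min(\delta_0,\eps/2)$. Everything else is the routine orientation-preserving monotonicity estimate, and the two uniform-continuity inputs (for $f$ and for $f^{-1}$) needed to make the arcs on both sides short. Note this argument uses only properties (C \ref{invariance}) and (C \ref{interpolation}); properties (C \ref{order}) and (C \ref{conjugation}) play no role here.
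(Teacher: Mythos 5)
Your plan is correct and essentially the same as the paper's: pick a finite subset of $D_0$ whose $f$-images are $\eps/2$-close consecutively, perturb those images to nearby points of $D_0$ preserving cyclic order, interpolate via (C~\ref{interpolation}), and squeeze via monotonicity. One small slip: to guarantee each image arc $f([x_i,x_{i+1}])$ has length $<\eps/2$ you need uniform continuity of $f$, not of $f^{-1}$; also the condition that the $x_i$ themselves be $\eps/2$-close is superfluous (the paper's proof dispenses with it entirely), and the closing appeal to connectedness of $\Homeo_+(\T)$ is not needed since your $\eps$-approximation of an arbitrary $f$ already \emph{is} density.
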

\begin{proof}Fix $s\in \Homeo_+(\T)$.
Given  sufficiently small $\eps>0$, choose a finite subset $B\subset D_0$ such that for every two consecutive points $b_1$ and $b_2$ in $B$, one has $|s(b_1)-s(b_2)|\le \eps/2$.
Then, for every  $b\in B$, choose a point $b'\in D_0$ such that $|s(b)-b'|\le \eps/2$, and such that the assignment $s(b)\mapsto b'$ 
is a circular order preserving bijection.
We claim that any element $g\in G_0$ such that $g(b)=b'$ for every $b\in B$ (which exists by condition (C \ref{interpolation})), satisfies $\sup_{x\in \T}|g(x)-s(x)|\le \eps$.
Indeed, if $x=b\in B$, then $|s(x)-g(x)|=|s(b)-b'|\le \eps/2$ by construction; otherwise
given $x\in \T\setminus B$, denote by $[b_1,b_2]\subset \T$ the smallest interval containing $x$, whose endpoints are in $B$. Then, we have
\[
s(x)-g(x)\le s(b_2)-g(b_1)\le s(b_2)-s(b_1)+\eps/2\le \eps,
\]
and similarly
\[
s(x)-g(x)\ge s(b_1)-g(b_2)\ge s(b_1)-s(b_2)-\eps/2\ge -\eps,
\]
as desired.
\end{proof}

The following result improves condition (C \ref{conjugation}).

\begin{lem}\label{l.C4}
Let $D_0\subset \T$ be a dense subset and let $G_0\subset\Homeo_+(\T)$ be a $D_0$-combinatorial group of circle homeomorphisms.
For every $\eps>0$ there exists $\delta$ such that if $a$ and $b\in G_0$ have finite order $q\in \N$ and same rotation number,  with $d_{C^0}(a,b)<\delta$, then for every finite subset $E\subset D_0$ such that $a^k\restriction_E=b^k\restriction_E$ for
every $k\in\{1 ,\ldots,q-1\}$, then there exists an element $h\in G_0$ as in condition (C~\ref{conjugation}), which moreover satisfies $d_{C^0}(h,\id)<\eps$.
\end{lem}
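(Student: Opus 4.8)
The plan is to get into a position where the conjugating element is $C^0$-close to the identity for purely topological reasons, reaching that position by a single small preliminary conjugation. First I would make two reductions. If $q=1$ then $a=b=\id$ and $h=\id$ works, so assume $q\ge 2$; since $a$ has order exactly $q$ it is topologically a rotation, hence $\rot(a)=\rot(b)=p/q$ with $\gcd(p,q)=1$ and no power $a^k$ ($1\le k\le q-1$) has a fixed point (so all $a$-orbits and $b$-orbits have exactly $q$ points). Replacing $E$ by $\hat E:=\bigcup_{k=0}^{q-1}a^k(E)$, which equals $\bigcup_k b^k(E)$ (as $a^k|_E=b^k|_E$), lies in $D_0$, is invariant under $a$ and $b$, satisfies $a|_{\hat E}=b|_{\hat E}$ (hence $a^k|_{\hat E}=b^k|_{\hat E}$ for all $k$), has at most $q\,\#E$ points, and is fixed pointwise by any $h$ fixing it, we may assume $E$ itself is $\{a,b\}$-invariant with $a|_E=b|_E$. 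I also record the elementary fact that if $h\in\Homeo_+(\T)$ fixes a finite $\eta$-net $N$ pointwise (i.e.\ all gaps of $N$ have length $\le\eta$), then $d_{C^0}(h,\id)\le\eta$: indeed $h$ maps each gap of $N$ onto itself, so it displaces points of that gap by at most its length.

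The lemma then reduces to the following \emph{claim}: for $\delta$ small enough in terms of $\eps$, there are $h_1\in G_0$ with $d_{C^0}(h_1,\id)<\eps/2$ and $h_1|_E=\id$, and an $a$-invariant $(\eps/2)$-net $N\subset D_0$ containing $E$, such that $b':=h_1 b h_1^{-1}$ agrees with $a$ on $N$. Granting this, $N$ is also $b'$-invariant with $a^k|_N=b'^k|_N$, so condition (C \ref{conjugation}) applied to $a$, $b'$, $N$ gives $h_2\in G_0$ with $h_2 a h_2^{-1}=b'$ and $h_2|_N=\id$; by the elementary fact $d_{C^0}(h_2,\id)\le\eps/2$. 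Then $h:=h_1^{-1}h_2\in G_0$ satisfies $hah^{-1}=h_1^{-1}b'h_1=b$, fixes $E$ pointwise (both $h_2$ and $h_1$ do, and $E\subset N$), and, by the standard estimates $d_{C^0}(fg,\id)\le d_{C^0}(f,\id)+d_{C^0}(g,\id)$ and $d_{C^0}(f^{-1},\id)=d_{C^0}(f,\id)$,
\[
d_{C^0}(h,\id)\le d_{C^0}(h_1^{-1},\id)+d_{C^0}(h_2,\id)=d_{C^0}(h_1,\id)+d_{C^0}(h_2,\id)<\eps .
\]

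To prove the claim I would first fix an $a$-invariant $(\eps/2)$-net $N\supset E$ inside $D_0$ (refine $E$ by a fine finite subset of $D_0$ inside a fundamental domain of $a$, then close up under $\langle a\rangle$). Since $a$ and $b$ have the same rotation number, each of their orbits sits on $\T$ in the same cyclic pattern; so, choosing a $b$-invariant set $Q\subset D_0$ together with an orbit-respecting, circular-order-preserving bijection $Q\to N$ (equal to the identity on the orbits meeting $E$, where $a=b$), condition (C \ref{interpolation}) yields $h_1\in G_0$ realizing this bijection and fixing $E$, and then $b'=h_1 b h_1^{-1}$ agrees with $a$ on $N$ by construction. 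The hard part, which I expect to be the main obstacle, is to choose $Q$ and the bijection so that $h_1$ is \emph{forced} to be $\eps/2$-close to the identity with $\delta$ depending only on $\eps$ (and $q$). The naive choice of $Q$ — the $b$-orbits through the points of $N$ — does not obviously work, because $d_{C^0}(a^k,b^k)$ is not controlled by $d_{C^0}(a,b)$ uniformly in $a,b$ (iteration can amplify $C^0$-distance through the derivatives). The feature that must be exploited is that $a^q$ is a translation, so the multipliers of $a$ along a full orbit multiply to $1$ and the trajectories $(a^k(z))_k$ and $(b^k(z))_k$, which coincide at $k=0$ and $k=q$, cannot drift too far apart in between; turning this into a uniform bound on $d_{C^0}(h_1,\id)$ — equivalently, removing the dependence on $a,b$ from the quantitative clause of condition (C \ref{conjugation}) — is the technical core of the argument.
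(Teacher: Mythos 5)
Your plan has the same two-step structure as the paper's: a preliminary conjugation to make the powers of the two maps agree on a $\delta$-dense invariant subset of $D_0$, followed by an application of condition (C~4) on that dense set, so that the second conjugator fixes a net and is therefore automatically $C^0$-small; the final conjugator is the composition. Your reductions (passing to an $\{a,b\}$-invariant $E$, the ``fixing an $\eta$-net forces $\eta$-closeness'' observation) are correct and match what the paper does implicitly.

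However, your proposal is incomplete precisely at the point you yourself flag as ``the technical core'': you do not construct the preliminary conjugator $h_1$, and you do not show how to make it $\eps/2$-close to the identity with $\delta$ depending only on $\eps$ (and $q$). The paper resolves this by opening its proof with the assertion that, ``as $\Homeo_+(\T)$ is a topological group, it is not restrictive, taking a smaller $\delta$, to assume that all powers of $a$ and $b$ are close,'' i.e.\ it effectively replaces the hypothesis $d_{C^0}(a,b)<\delta$ by $\sup_{1\le i\le q-1}d_{C^0}(a^i,b^i)<\delta$. Once all powers are $\delta$-close, the preliminary conjugator can be built by (C~2)-interpolation between the $a$-orbit and the $b$-orbit of a $\delta$-dense subset of a fundamental domain, and its smallness follows because it moves every point of a $\delta$-dense set by at most $\delta$. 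Your skepticism about that replacement being automatic is in fact well founded: continuity of multiplication in a topological group does not give \emph{uniform} continuity, and for finite-order circle homeomorphisms one can make $a'$ arbitrarily large at a point of a periodic orbit (and correspondingly small elsewhere along it), so that $d_{C^0}(a^2,b^2)$ can far exceed $d_{C^0}(a,b)$ even with $a^q=b^q=\id$. Thus the parenthetical in the paper is really a strengthening of the hypothesis rather than a harmless reduction; it is, however, exactly the hypothesis that is available in the proof of Proposition~\ref{p:realization lemma}, where $a=\pi_1(c)$, $b=\pi_2(c)$ and $a^i=\pi_1(c^i)$, $b^i=\pi_2(c^i)$ are each $\eps_0$-close to $c^i$, so all powers are automatically $2\eps_0$-close. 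So to complete your argument you should either add ``$\sup_{1\le i\le q-1}d_{C^0}(a^i,b^i)<\delta$'' to the hypothesis (which is what the lemma really needs and what is used), or else supply a genuinely new argument that the drift of the trajectories $(a^k(z))_k$ and $(b^k(z))_k$ is controlled; the heuristic you sketch (that the multipliers along a full $a$-orbit multiply to $1$) does not by itself prevent the drift at intermediate times from being amplified by a large local derivative.
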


\begin{proof}
Fix $\eps>0$, and let $a$ and $b$ be two torsion elements of same rotation number $\rot(a)=\rot(b)=1/q$, with $\sup_{i\in \{1,\ldots,q-1\}}\left \{d_{C^0}(a^i,b^i)\right \}<\delta$, where the constant $\delta>0$ will be fixed later (note that as $\Homeo_+(\T)$ is a topological group, it is not restrictive, taking a smaller $\delta$, to assume that all powers of $a$ and $b$ are close).

Take a finite subset $E\subset G_0$ as in the statement. Note that if $E=\varnothing$, then the result is  simple consequence of Lemma \ref{l.combinatorial_dense}, as it holds for $D_0=\T$ and $G_0=\Homeo_+(\T)$.
Fix a point $p\in E$, and consider the interval $I=[p,a(p)]=[p,b(p)]$, which is a fundamental domain for both $a$ and $b$.
Take a finite $a$-invariant subset $B\subset D_0$ which is $\delta$-dense in $\T$ (here we use condition (C \ref{invariance})). Set $B_0=B\cap I$. Then, by condition (C \ref{conjugation}), we can find an element $k\in G_0$ such that
$ka^i(x)k^{-1}=b^i(x)$ for every $i\in\{0,\ldots,q-1\}$ and $x\in B_0\cup E$. As in the proof of Lemma \ref{l.combinatorial_dense}, this gives $d_{C^0}(k,\id)<2\delta$. Set $\overline a=k ak^{-1}$, which is an element in $G_0$ satisfying $\overline a^i(x)=b^i(x)$ for every $x\in B\cup E$. Therefore, by condition (C \ref{conjugation}), there exists an element $\overline h\in G_0$ such that $\overline h \overline a=b\overline h$ and $\overline h(x)=x$ for every $x\in B\cup E$. In particular $\overline h$ is $\delta$-close to the identity. Taking $\delta$ small enough, we see that the composition $h=\overline hk\in G_0$ satisfies the desired properties.
\end{proof}

We are ready to discuss the main technical result of this section.

\begin{prop}[Combinatorial perturbation]\label{p:realization lemma} Let $D_0\subset \T$ be a dense subset and let $G_0\subset\Homeo_+(\T)$ be a $D_0$-combinatorial group of circle homeomorphisms. Let $(G,\alpha,T) \subset\Homeo_+(\T)$ be a marked finitely generated virtually
free group of circle homeomorphisms, and let $\cG\subset G$ be the system of generators associated with the marking $\alpha:G\to \Isom_+(X)$, as in Definition \ref{d:markov-partition}. Then for every $\eps>0$ and every finite subset $P\subset D_0$ 
such that $\cG^2(P)\subset D_0$,\footnote{Note that $\cG^2$ always contains $\id$, therefore $\cG^2(P)=P\cup\cG(P)\cup \cG^2(P)$.} there exists a homomorphism $\pi:G\to G_0$ such that the following conditions are satisfied.
\begin{enumerate}[\rm (1)]
	\item  For every $s\in \cG$, the homeomorphisms $s$ and $\pi(s)$ have the same order, and the rotation numbers of $s$ and $\pi(s)$ agree when $\rot(s)$ is rational.\label{i:rotation}
	\item For every $s\in \cG$, the image $\pi(s)$ and $s$ are $\eps$-close (with respect to the uniform distance).\label{i:close}
	\item $\pi(s)\restriction_P=s\restriction_P$
	for any $s\in \cG$. Moreover, if $s\in G_v$ is an element of a vertex group, then $\pi(s)\restriction_{G_v(P)}=s\restriction_{G_v(P)}$.\label{i:restriction}	
\end{enumerate} 
\end{prop}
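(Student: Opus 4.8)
The statement asks us to perturb the generators of a virtually free group $(G,\alpha,T)$ to land inside a combinatorial group $G_0$, while controlling (1) orders and rational rotation numbers, (2) $C^0$-distance to the originals, and (3) the values on a finite set $P$. The strategy is to use the free-by-finite structure of $G$: pick the marking data, so $G$ is presented as in \eqref{eq:pres_fundamental_group} by the vertex groups $G_v$ (finite cyclic, by Remark~\ref{r:mthmC}), the stable letters $s\in S$, and the edge relations $e^{-1}\alpha_e(g)e=\omega_e(g)$. A homomorphism $\pi:G\to G_0$ is specified by choosing images of a generating set compatibly with these relations. So the plan is: first define $\pi$ on each vertex group $G_v$ so that it is a faithful copy of $G_v$ inside $G_0$ agreeing with the original action on $G_v(P)$; then define $\pi$ on the stable letters $s\in S$; then check that the defining relations are satisfied; then verify (1)--(3).

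\textbf{Step 1: vertex groups.} Each $G_v$ is finite cyclic of some order $q_v$, generated by an element $a_v$ with $\rot(a_v)=p_v/q_v$. By condition~(C~\ref{order}) $G_0$ contains an element of order $q_v$, and conjugating it we may assume it has the same rotation number as $a_v$. Using condition~(C~\ref{interpolation}) we can choose this element $\overline{a_v}\in G_0$ so that its powers agree with the powers of $a_v$ on the finite set $G_v(P)$ — indeed $G_v(P)$ is a finite $G_v$-invariant set, so we only need to match the action of the cyclic group on the orbit, which is a single permutation realized by (C~\ref{interpolation}); and by Lemma~\ref{l.combinatorial_dense} this can be done with $\overline{a_v}$ as $C^0$-close to $a_v$ as we like. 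Setting $\pi(a_v):=\overline{a_v}$ defines $\pi$ on $G_v$; by construction the order and rotation number match and $\pi$ restricted to $G_v(P)$ agrees with the original, giving the strengthened part of~(3).

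\textbf{Step 2: stable letters and the edge relations.} For each $s\in S$, the relation to satisfy is $s^{-1}\alpha_s(A_s)s=\omega_s(A_s)$, i.e.\ $\pi(s)$ must conjugate the (already chosen) finite group $\pi(\alpha_s(A_s))\subset\pi(G_{o(s)})$ to $\pi(\omega_s(A_s))\subset\pi(G_{t(s)})$ in the prescribed way, while staying $C^0$-close to $s$ and agreeing with $s$ on $P$. The original $s$ conjugates $\alpha_s(A_s)$ to $\omega_s(A_s)$, and after Step~1 we have arranged that $\pi(\alpha_s(A_s))$ acts on the relevant finite orbits exactly as $\alpha_s(A_s)$ does and likewise on the target side. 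Thus $s$ (the original) already conjugates $\pi(\alpha_s(A_s))$ to $\pi(\omega_s(A_s))$ \emph{up to an error supported on finitely many points}; more precisely, $s\,\pi(\alpha_s(g))\,s^{-1}$ and $\pi(\omega_s(g))$ are two finite-order elements of $G_0$ of the same rotation number agreeing on a controlled finite set, and $C^0$-close by Step~1 (choosing the vertex perturbations fine enough). Lemma~\ref{l.C4} then supplies $h_s\in G_0$, $C^0$-close to the identity and fixing the relevant finite set (in particular $s(P)$, which lies in $D_0$ by the hypothesis $\cG^2(P)\subset D_0$), with $h_s\big(s\,\pi(\alpha_s(g))\,s^{-1}\big)h_s^{-1}=\pi(\omega_s(g))$. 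We are not quite done because $s$ itself is not in $G_0$; so first replace $s$ by a $C^0$-close element $s'\in G_0$ agreeing with $s$ on $\cG(P)$ via (C~\ref{interpolation}) and Lemma~\ref{l.combinatorial_dense}, note that $s'$ still conjugates $\pi(\alpha_s(A_s))$ to $\pi(\omega_s(A_s))$ up to a small finitely-supported error (its powers on the finite orbits are controlled because $s'=s$ there), apply Lemma~\ref{l.C4} to get the correcting $h_s$, and set $\pi(s):=h_s s'$. This lies in $G_0$, is $C^0$-close to $s$, agrees with $s$ on $P$ (since $h_s$ fixes $s'(P)=s(P)$), and has the same order and rotation number as $s$ (both are infinite-order with the prescribed rotation number when rational, or we match directly when $s$ has finite order by the same torsion argument).

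\textbf{Step 3: checking it is a homomorphism, and the main obstacle.} With $\pi$ defined on all generators in $\cG$, one must verify that all relations of the presentation~\eqref{eq:pres_fundamental_group} hold: the internal relations of each $G_v$ (automatic, since $\pi(a_v)$ has order exactly $q_v$), the relations $\overline e=e^{-1}$ and $e=\id$ for $e\in E_T$ (handled by only defining stable letters for $s\in S$ and setting spanning-tree edges to $\id$, exactly as in the marking), and the edge relations $e^{-1}\alpha_e(g)e=\omega_e(g)$, which were arranged in Step~2 for $e\in S$ and are trivial for $e\in E_T$ since then $\alpha_e$ and $\omega_e$ land in the same vertex group under the identification. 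This yields a well-defined $\pi:G\to G_0$, and properties (1)--(3) have been checked along the way; shrinking all the auxiliary $\delta$'s at the start (using that $\Homeo_+(\T)$ is a topological group, so composition is continuous) makes the accumulated $C^0$-errors in each $\pi(s)$ at most $\eps$. The main obstacle is Step~2: one must set up the finite data (the orbits $G_v(P)$, the images $s'(P)$, the faithful torsion elements) \emph{coherently across all vertices and edges at once} so that the hypotheses of Lemma~\ref{l.C4} — same rotation number, agreement of all powers on the prescribed finite set, and $C^0$-closeness — are simultaneously met for every edge relation; this is a bookkeeping problem of choosing one global finite subset of $D_0$ (containing $P$, $\cG(P)$, $\cG^2(P)$ and all relevant finite orbits) and a single small $\delta$ before making any choices, and then building the vertex-group images and stable-letter approximations relative to it.
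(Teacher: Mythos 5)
Your overall strategy --- approximate the vertex-group generators, approximate the stable letters, then use Lemma~\ref{l.C4} to repair the conjugation relations --- is in the right spirit and echoes the paper's approach, which builds $\pi$ by induction on the number of edges of $\overline X$ (Step~1: finite cyclic groups; Step~2: iterated amalgamated free products, retracting the spanning tree; Step~3: iterated HNN extensions). However, there are two genuine gaps in your argument.

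First, and most seriously, your Step~3 dismisses the relations coming from spanning-tree edges as trivial: you write that for $e\in E_T$ the relation $\alpha_e(g)=\omega_e(g)$ is automatic ``since $\alpha_e$ and $\omega_e$ land in the same vertex group under the identification.'' This is circular. The maps $\alpha_e$ and $\omega_e$ land in the distinct vertex groups $G_{o(e)}$ and $G_{t(e)}$; they only become identified \emph{in} $G$ because of the relation $e=\id$. When you define $\pi$ independently on each $G_v$ in Step~1, nothing guarantees that $\pi$ restricted to $G_{o(e)}$ and $\pi$ restricted to $G_{t(e)}$ agree on the common subgroup $\alpha_e(A_e)=\omega_e(A_e)$. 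Concretely, for $\mathsf{SL}(2,\Z)\cong\Z_4*_{\Z_2}\Z_6$ with generators $a_1,a_2$ and the relation $a_1^2=a_2^3$, your Step~1 produces $\overline{a_1}$ of order~4 and $\overline{a_2}$ of order~6, each matching the original on the respective finite orbits, but there is no reason for $\overline{a_1}^{\,2}=\overline{a_2}^{\,3}$ in $G_0$. Repairing this requires exactly the same kind of work you reserve for the HNN edges: observe that $\pi_1(c)$ and $\pi_2(c)$ do agree on $C(P)$ (which sits inside both $G_{o(e)}(P)$ and $G_{t(e)}(P)$), are $C^0$-close, and have the same order and rotation number, so one can apply condition~(C~\ref{conjugation}) and Lemma~\ref{l.C4} to conjugate one of them onto the other. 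This is precisely the paper's Step~2, and it is not optional.

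Second, the closing paragraph frames the coherence of all these corrections as a ``bookkeeping problem'' to be solved by fixing a global finite set and a single $\delta$ up front. This undersells the difficulty: each conjugation you perform to fix one edge relation moves the data relevant to the adjacent edges, so a well-ordered inductive procedure (collapse the spanning tree edge by edge, then add HNN edges one at a time, as the paper does) is needed, not merely a larger finite set. Relatedly, your control of $\rot(\pi(s))$ for stable letters is asserted but not established; the paper's Step~3 has to construct explicit periodic and non-periodic orbits (the points $x_i'$ and $y_i'$) precisely to pin down the order and rotation number of $\pi(s)$, and then must take $h_s$ fixing those points in addition to $P\cup s(P)$ so that the rotation number and infinite order survive the final conjugation. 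Without this, condition~(1) is not verified.
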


\begin{rem}
Observe that condition \eqref{i:rotation} cannot be extended to obtain equality of all rotation numbers: Thompson's group $T$ is $\Z[\tfrac12]/\Z$-combinatorial (Lemma \ref{e:Thompson}), but contains no element of irrational rotation number (Remark \ref{r.Thompson_spectrum}).
\end{rem}

\begin{rem}	
In fact, in this work we will never need to apply Proposition \ref{p:realization lemma} to subgroups $G\subset \Homeo_+(\T)$ containing elements with irrational rotation numbers (see the discussion in Section \ref{ss:multicon_intro}).
Also, condition \eqref{i:close} is needed only when there are non-trivial HNN extensions to consider.
\end{rem}

\begin{proof}[Proof of Proposition \ref{p:realization lemma}]
We argue by induction on the number of edges of the quotient graph $\overline X=X/\alpha(G)$: the marking $(G,\alpha,T)$ gives an identification of $G$ with the fundamental group of a graph of finite cyclic groups (see \eqref{eq:pres_fundamental_group} and Remark \ref{r:mthmC}), and therefore $G$ is obtained by first performing  iterated amalgamated free products of the vertex groups $G_v$, which are finite cyclic groups, and then iterated HNN extensions over the remaining edge groups.

\setcounter{step}{0}
\begin{step}[Initial step]\label{case2_realization}
	$G$ is a finite cyclic group.
\end{step} 

Write $\cG=G=\{\mathrm{id}, s,\dots,s^{q-1}\}$, with $\rot(s)=1/q$, and $P_1=G(P)\subset D_0$ (which is a $G$-invariant finite subset).
Fixing a point $x_1\in P_1$, we can take a monotone enumeration of the points of $P_1$:
\begin{align*}
	x_1&<\dots<x_k< x_{k+1}=s(x_1)<\dots<x_{2k}<x_{2k+1}=s^2(x_1)\\
	&<\dots <x_{(q-1)k+1}=s^{q-1}(x_1)<\dots <x_{qk}\,(<x_1).
\end{align*}
Let $g_0\in G_0$ be an element 
of order $q$ (which exists by condition (C~\ref{order})), which we can assume
to have the same rotation number as $s$. 
Observe that, given $\eps>0$, there exists $g_\eps\in G_0$ with the same properties and which is $\eps$-close to $s$: indeed, let $\varphi\in \Homeo_+(\T)$ be such that $\varphi g_0 \varphi^{-1}=s$, and using Lemma \ref{l.combinatorial_dense}, take $f\in G_0$ which is sufficiently close to $\varphi$, so that the conjugate $g_\eps:=fg_0f^{-1}$ (which is an element of $G_0$) is $\eps$-close to $s$.

Next, for every sufficiently small $\eps_1>0$, there exist a subset $Q_1=\{y_{i}\}_{i=1}^{qk}\subset D_0$ and $\eps_0>0$ such that:
\begin{itemize}
	\item for every $i\in\{1,\ldots,qk\}$, the point $y_i$ is $\frac{\eps_1}{2}$-close to $x_i$ and the map $P_1\to Q_1$ defined by $x_i\mapsto y_i$ is a circular order preserving bijection.
	\item $g_{\eps_0}(y_i)=y_{i+k}$ for every $i\in\{1,\ldots,qk\}$ (where subscripts are taken modulo ${qk}$). Note that here we use condition (C  \ref{invariance}).
\end{itemize}
For every $i\in \{1,\ldots,qk\}$, let $I_i\subset \T$ denote the shortest interval whose endpoints are $x_i$ and $y_i$, and take a finite subset $B\subset D_0\setminus \bigcup_{i=1}^{qk}I_i$ which is $\frac{\eps_1}{2}$-dense in $\T\setminus  \bigcup_{i=1}^{qk}I_i$.	
Then we can use condition (C~\ref{interpolation}) to find an element $h\in G_0$ such that $h(x_i)=y_i$ for every $i\in\{1,\ldots,qk\}$ and $h(x)=x$ for every $x\in B$.
This gives that $h$ is $\eps_1$-close to $\id$ in the uniform distance. Therefore, for every $\eps>0$ there exists $\eps_1>0$ such that the element $\bar g=h^{-1}g_{\eps_0}h$ belongs to $G_0$, has order $q$ and $\rot(\bar g)=1/q$, and is $\eps$-close to $s$. Moreover, by construction, for every $i\in \{1,\ldots,qk\}$ and $j\in\{0,\ldots,q-1\}$, we have  $\bar g^j(x_i)=x_{i+jk}=s^j(x_i)$ (where subscripts are taken modulo ${qk}$). Hence we can define the desired morphism $\pi:G\to G_0$ by $\pi(s)=\bar g$.

The first inductive step will allow to retract the spanning tree $T\subset \overline X$ to a single vertex.

\begin{step}[Iterated amalgamated free products]\label{case3_realization}
	$G=G_1*_CG_2$ is the amalgamated free product of a non-trivial group $G_1$ and a non-trivial finite cyclic group $G_2$,
	over a finite cyclic subgroup $C$
\end{step}

Consider the corresponding system of generators
$\cG_1$  for $G_1$, so that $\cG=\cG_1\cup G_2$. We also write $C=\alpha_e(C)=\omega_e(C)$ for the common subgroup of $G_1$ and $G_2$ in $G_1*_CG_2$. We will show by induction that the result holds for $G$. 
After Step \ref{case2_realization}, and induction on Step \ref{case3_realization}, we can assume that the result holds for both $G_1$ and $G_2$,  for any $\eps>0$, and any finite subset $P\subset D_0$ such that $\cG_1(P)$ and $G_2(P)$ are in $D_0$. (Note here that we do not technically require $\cG_1^2(P)\subset D_0$.) We will show that the same result holds for the amalgamated free product $G_1*_CG_2$.

For this, let us assume that $\cG(P)\subset D_0$.
By induction, there exist morphisms
$\pi_1:G_1\to G_0$ and $\pi_2:G_2\to G_0$ 
satisfying conditions (\ref{i:rotation}-\ref{i:restriction}) in the statement, with respect to the subset $P$ and some sufficiently small $\eps_0>0$ to be fixed later.
Let $q$ be the order of the cyclic subgroup $C$, and let $c$ be a generator of $C$; write $a=\pi_1(c)$ and $b=\pi_2(c)$, and note that they are both $\eps_0$-close to $c$, and thus $2\eps_0$-close one to the other.
By the induction hypothesis, we have that $\rot(a)=\rot(b)$ and the equalities
\[
a^j\restriction_{C(P)}=b^j\restriction_{C(P)}=c^j\restriction_{C(P)}\quad\text{for every }j\in\{0,\ldots,q-1\}.
\]
It follows by condition~(C \ref{conjugation}) that there exists $h \in G_0$ such that
$a=hbh^{-1}$ and $h=\mathrm{id}$ on $C(P)$. Moreover by Lemma \ref{l.C4}, for any fixed $\eps_1>0$ we can take $\eps_0>0$ such that $h$ is $\eps_1$-close to $\id$.

We then consider the conjugate morphism $\bar \pi_2:G_2\to G_0$ given by
$g\mapsto h\pi_2(g)h^{-1}$.  For every $\eps>0$, we can choose $\eps_1>0$ small enough, so that $\bar{\pi}_2(g)$ is $\eps$-close to $g$ for every $g\in G_2$.
We deduce also from \eqref{i:restriction} that for any $s\in G_2$  we have
that $\bar{\pi}_2(s)\restriction_{P}=s\restriction_{P}$. 
Therefore also the morphism $\bar\pi_2$ satisfies conditions (\ref{i:rotation}-\ref{i:restriction}) in the statement, with respect to the subset $P$.

Since
$\bar{\pi}_2(c)=hbh^{-1}=a=\pi_1(c)$, by the universal property of amalgamated  free products, there exists a (unique) morphism $\pi:G\to G_0$ extending $\pi_1$ and $ \bar{\pi}_2$. From the construction, we have $\pi(s)=s$ on $P$ for any $s\in \cG$. This concludes the construction in this step.

Therefore, by induction, we are reduced to consider a graph of groups with a single vertex.		
We remark again that after this first iterated process, we still only require $\cG(P)\subset D_0$ (but not $\cG^2(P)\subset D_0$). The second inductive step concludes the construction.

\begin{step}[Iterated HNN extensions]\label{case4_realization}
	$G=G_1*_C$ is an HNN extension of a group $G_1$ over a finite cyclic group $C$.
\end{step}

Write $C_1=\alpha_e(C)$ and $C_2=\omega_e(C)$, for the two subgroups of $G_1$, and let $s\in G$ be a stable letter, that is, a non-trivial element such that $s\alpha_e(c)s^{-1}=\omega_e(c)$ for every $c\in C$. The system of generators for $G$ is of the form
$\cG=\cG_1\cup \{s,s^{-1}\}$ where $\cG_1$ is a system of generators of
$G_1$ containing $C_1$ and
$C_2$. We assume by induction that the result holds for $G_1$, for any $\eps>0$,  and for any   finite subset $P\subset D_0$ such that $\cG_1^2(P)\subset D_0$. We will show that the result holds for the HNN extension $G_1*_C$.

For this, let us assume that $\cG^2(P)\subset D_0$. We also write $Q=\cG(P)$.
By induction, there exists a morphism
$\pi_1:G_1\to G_0$ satisfying conditions (\ref{i:rotation}-\ref{i:restriction}) in the statement, with respect to the subset $P$ and some sufficiently small $\eps_0>0$ to be fixed later. Moreover, from the previous steps, as $\cG(Q)\subset D_0$, the result also holds for every vertex group $G_v$ (in particular for $C_1$ and $C_2$), with respect to the subset $Q$. Let $r$ be the order of $C$, and take the generators $c_1$ and $c_2$  of $C_1$ and $C_2$ respectively, such that $\rot(c_1)=\rot(c_2)=1/r$. It follows that $sc_1s^{-1}=c_2$.

Assume first that $s$ has rational rotation number, with $\rot(s)=p/q$ in reduced terms. Let $x_0\in \T$ be a periodic point for $s$ and write $x_i=s^i(x_0)$ for $i\in\{1,\ldots,q\}$ (note that $x_q=x_0$). As $s$ has infinite order, there exists a point $y_0\in \T$ which is not periodic, and write similarly $y_i=s^i(y_0)$ for $i\in\{1,\ldots,q\}$ (note that $y_0\neq y_q$).

For any $i\in \{0,\ldots,q\}$, take points $x_i'$ and $y_i'$ in $D_0$ as follows: if $x_i$ (resp.\ $y_i$) belongs to $D_0$, then set $x_i'=x_i$ (resp.\ $y_i'=y_i$); if not, choose $x_i'$ (resp.\ $y'_i$) in $D_0\setminus \cG(Q)$ (with $x_q'=x_0'$) sufficiently close to $x_i$ (resp.\ $y_i$), so that at the end, the map $\cG(Q)\cup \{x_i,y_i\}_{i=0}^{q}\to \cG(Q)\cup\{x'_i,y'_i\}_{i=0}^{q}$	given by
\begin{equation}\label{eq:cond_HNN0}
	\left\{\begin{array}{lr}
		x_i\mapsto x_i' & \text{for }i\in\{0,\ldots, q-1\},\\
		y_i\mapsto y_i' & \text{for }i\in\{0,\ldots, q\},\\
		x\mapsto x& \text{for }x\in \cG(Q),
	\end{array}\right.
\end{equation}
is a circular order preserving bijection.
Note that any element $g\in G_0$ such that
\begin{equation}\label{eq:cond_HNN1}
	\text{$g(x'_i)=x'_{i+1}$ and $g(y'_{i})=y'_{i+1}$ for every $i\in \{0,\ldots,q-1\}$}
\end{equation} (which exists by condition (C \ref{interpolation})), satisfies the requirements in condition \eqref{i:rotation} of the statement:  such an element $g$ has a periodic orbit of order $q$ (the orbit of $x'_0$) and an orbit with more than $q$ points (the orbit of $y_0'$), and therefore it has infinite order; moreover the rotation numbers of $g$ and $s$ are the same.

We need however to choose an element $g\in G_0$ satisfying extra conditions. First of all, we require
$g=s$ on $\cG(Q)\subset D_0$. As the image of $Q \cup s^{-1}(Q)\subset \cG(Q)$ by $g$ is $s(Q)\cup Q\subset  D_0$, we deduce that $g^{-1}=s^{-1}$ on $Q$ (and thus on $P$). This is always possible, and compatible with the previous requirement, by the fact that the bijection defined by \eqref{eq:cond_HNN0} preserves the circular order, so that we can use condition (C \ref{interpolation}).

We also need a strengthening of condition  \eqref{eq:cond_HNN1}: for every $j\in\{0,\ldots,r-1\}$ and $i\in\{0,\ldots, q-1\}$, we want
\begin{equation}\label{eq:cond_HNN2}
	g\pi_1(c_1^j)(x'_i)=\pi_1(c_2^j)(x'_{i+1})\quad\text{and}\quad	g\pi_1(c_1^j)(y'_i)=\pi_1(c_2^j)(y'_{i+1}).
\end{equation}
Let us verify that condition \eqref{eq:cond_HNN2} is realizable. First, we note that as every $x_i'$ and $y_i'$ are in $D_0$, by condition (C \ref{invariance}) also their images by $\pi_1(C_1)$ and $\pi_1(C_2)$ are in $D_0$. 
It remains to justify that there exists $\eps_0>0$ such that the circular ordering of these points is compatible with the requirement \eqref{eq:cond_HNN2}.		
This is because, as  $\eps_0\to 0$, for every $\xi\in \{x_i',y_i'\}_{i=0}^q$ we have
\[
\pi_1(c_1^j)(\xi)\to c_1^j(\xi)\quad \text{and}\quad \pi_1(c_2^j)s(\xi)\to c_2^js(\xi)=sc_1^j(\xi).
\]
Moreover, up to add a finite subset $B\subset D_0$ for defining $g$ by interpolation, as we did in Step \ref{case2_realization}, we can find the desired $g$ which is arbitrarily close to $s$.

Write now $a=g \pi_1(c_1)g^{-1}$ and $b=\pi_1(c_2)$. Then the elements $a$ and
$b$ belong to $G_0$, they have finite order $q$ and same rotation number. As we can assume that $g$ is arbitrarily close to $s$, it follows that $a$ is arbitrarily close to $s\pi_1(c_1)s^{-1}$, which in turn is arbitrarily close to $sc_1s^{-1}=c_2$ and thus to $b=\pi_1(c_2)$.

Since for all $x\in P\cup s(P)$ and $j\in \{1,\ldots,q-1\}$, we have $s^{-1}(x)\in \cG(P)=Q$ and $c_1^j(s^{-1}(x))\in \cG^2(P)=\cG(Q)$, then the following equality holds:
\begin{align*}
	a^j(x)&=g\pi_1(c^j_1)g^{-1}(x)=gc^j_1s^{-1}(x)=sc_1^js^{-1}(x)\\
	&=c^j_2(x)=\pi_1(c^j_2)(x)=b^j(x).
\end{align*}
Considering also \eqref{eq:cond_HNN2}, by condition (C \ref{conjugation}), there exists $h\in G_0$ such
that $hah^{-1}=b$ and $h=\mathrm{id}$ on $P\cup s(P)\cup \{x_i',y_i'\}_{i=0}^{q}$.  As we can assume that $a$ and $b$ are arbitrarily close, Lemma \ref{l.C4} ensures that we can find such an element $h\in G_0$ which is arbitrarily close to $\id$.

Hence, setting
$\bar g=hg$, which belongs to $G_0$ and is arbitrarily close to $s$, we have
\[
\pi_1(c_2)=b=hah^{-1}=hg\pi_1(c_1)g^{-1}h^{-1}=\bar g \pi_1(c_1)
\bar{g}^{-1},
\]
so by the universal property of HNN extensions there exists a (unique) morphism
$\pi:G\to G_0$ extending $\pi_1$ such that $\pi(s)=\bar{g}$.
Moreover, by construction, we have that $\bar{g}=s$ and $\bar{g}^{-1}=s^{-1}$ on
$P$, it follows that $\pi(t)=t$ on $P$ for any $t\in \cG$.
Finally, since $h=\id$ on $\{x_i',y_i'\}_{i=0}^{q}$, 
the element $\bar g$ satisfies the condition in \eqref{eq:cond_HNN1}, and therefore $\rot(\bar g)=\rot(s)$ and $\bar g$ has infinite order.		
Thus the result also holds for the HNN extension $G$, in the case $\rot(s)$ is rational.

Assume now that $s$ has irrational rotation number $\rot(s)=\alpha$, with rational approximations $\{p_n/q_n\}_{n\in\N}$. Take a point $x_0\in \T$ in the minimal invariant set of $s$; observe that (by definition of rational approximations) for any $n\in \N$, there is an interval $I_n\subset \T$, whose endpoints are $x_0$ and $s^{q_n}(x_0)$, and which contains no other point $s^i(x_0)$ with $i\in\{1,\ldots,q_n-1\}$. Choosing $n$ large enough, we can assume that $I_n$ contains no point of $\cG(Q)$ neither. As before, we write $x_i=s^i(x_0)$ for every $i\in\{1,\ldots,q_n\}$.
Take now $y_0\in I_{n-1}\setminus s^{-q_n}(I_n)$, so that  none of the points $y_i:=s^i(y_0)$ for $i\in \{0,\ldots, q_n\}$ is contained in $I_n$ (this follows again by combinatorial properties of rational approximations). 
For any $i\in\{0,\ldots,q_n\}$, take points $x_i'$ and $y_i'$ as follows: if $x_i$ (resp.\ $y_i$) belongs to $D_0$, then set $x_i'=x_i$ (resp.\ $y_i'=y_i$); if not, choose $x_i'$ (resp.\ $y'_i$) in $D_0\setminus \cG(Q)$ sufficiently close to $x_i$ (resp.\ $y_i$), so that at the end, the map $\cG(Q)\cup \{x_i,y_i\}_{i=0}^{q_n}\to \cG(Q)\cup\{x'_i,y'_i\}_{i=0}^{q_n}$ given by
\[
\left\{\begin{array}{lr}
	x_i\mapsto x_i' & \text{for }i\in\{0,\ldots, q_n\},\\
	y_i\mapsto y_i' & \text{for }i\in\{0,\ldots, q_n\},\\
	x\mapsto x& \text{for }x\in \cG(Q),
\end{array}\right.
\]
is a circular order preserving bijection.
Thus, by condition (C \ref{interpolation}), there exists an element $g\in G_0$ such that the following properties are satisfied.
\begin{itemize}
	\item $g=s$ on $\cG(Q) \subset D_0$. As before, this gives $g^{-1}=s^{-1}$ on $Q$.
	\item $g(x'_i)=x'_{i+1}$ for every $i\in \{0,\ldots,q_{n}-2\}$, and $g(x'_{q_n-1})=x'_0$. (Note that this implies $\rot(g)=p_n/q_n$.)
	\item $g(y'_{i})=y'_{i+1}$ for every $i\in \{0,\ldots,q_n-1\}$.	Therefore $g$ has infinite order.
\end{itemize}
We then proceed as in the case of rational rotation number: we take $g\in G_0$ which satisfies in addition the condition analogue to \eqref{eq:cond_HNN2}, from which we will be able to find $h\in G_0$ such
that $hah^{-1}=b$ and $h=\mathrm{id}$ on $P\cup s(P)\cup \{x_i',y_i'\}_{i=0}^{q_n-1}$. We then define $\pi(s)=hg$ as before, which will have the desired properties. Details are left to the reader.

Therefore, after repeating Step \ref{case4_realization} finitely many times, we get that for every $\eps>0$, and every finite subset $P\subset D_0$ such that $\cG^2(P)\subset D_0$, there exists $\pi:G\to G_0$ satisfying conditions (\ref{i:rotation}-\ref{i:restriction}) of the statement.	
\end{proof}

\begin{thm}\label{t:realization theorem}
Let $(G,\alpha,T)\subset \Homeo_+(\T)$ be a marked
virtually free group whose action on $\T$ admits a proper ping-pong partition. Let $D_0\subset \T$ be a dense subset and let $G_0$ be a $D_0$-combinatorial group of circle homeomorphisms. Then the group $G$ is semi-conjugate to a subgroup of $G_0$. Moreover, the semi-conjugacy can be chosen arbitrarily close to the identity with respect to the uniform distance.
\end{thm}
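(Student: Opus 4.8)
The plan is to produce, out of the given representation $\rho_1\colon (G,\alpha,T)\hookrightarrow\Homeo_+(\T)$ together with its proper ping-pong partition $\Theta=\{U^e_v,V_s\}$, a new representation $\rho_2\colon (G,\alpha,T)\to G_0$ admitting the \emph{very same} family $\Theta$ as a proper ping-pong partition, and then to conclude by the semi-conjugacy criterion of Theorem~\ref{t:bpvfreeB}, applied with the identity ping-pong equivalence. First I would let $\cI$ be the finite collection of connected components of the atoms of $\Theta$ and $\partial\cI$ the finite set of their endpoints; if $S=\{s,\overline s\}$ and all $U_v$ are empty, I enlarge this set by one point $p$ witnessing (IF~\ref{pp11}) for $\rho_1$ (such $p$ ranges over a prescribed open set, hence may be chosen in $D_0$). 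Call $P_0$ the resulting finite set. Since $\cG^2(P_0)$ is finite and $D_0$ is dense, there is a homeomorphism $\psi$ of $\T$, as $C^0$-close to the identity as we wish, with $\psi(\cG^2(P_0))\subset D_0$ and $\psi$ injective and cyclic-order-preserving on $\cG^2(P_0)$. Replacing $\rho_1$ by the conjugate representation $\psi\rho_1\psi^{-1}$ (which is conjugate, in particular semi-conjugate, to $\rho_1$, and carries the equivalent proper ping-pong partition $\psi(\Theta)$), I reduce to the situation $P:=\psi(P_0)\subset D_0$ and $\cG^2(P)\subset D_0$.

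Next I would apply the Combinatorial Perturbation Proposition~\ref{p:realization lemma} to $(G,\alpha,T)$, $G_0$, the set $P$ and a parameter $\eps>0$ to be chosen, obtaining a homomorphism $\pi=:\rho_2\colon G\to G_0$ such that, for every $s\in\cG$: $\rho_2(s)$ and $\rho_1(s)$ have the same order and, when rational, the same rotation number; $d_{C^0}(\rho_2(s),\rho_1(s))<\eps$; and $\rho_2(s)\restriction_P=\rho_1(s)\restriction_P$ (with agreement even on $G_v(P)$ for $s\in G_v$). The crucial observation is that, because \emph{every} endpoint of \emph{every} $I\in\cI$ lies in $P$ and $\rho_2(s)$ agrees with $\rho_1(s)$ on $P$, the orientation-preserving circle homeomorphisms $\rho_2(s)$ and $\rho_1(s)$ coincide on $\partial I$ and hence map $I$ onto the very same arc; therefore $\rho_2(s)(O)=\rho_1(s)(O)$ for every atom $O$ of $\Theta$ and every sub-atom $U^e_v$. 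Every condition in Definitions~\ref{d:generalized_ping-pong} and~\ref{d:markov-partition} that asserts an inclusion (or equality, or a missed point) between images of (sub-)atoms under elements of $\cG$ — namely (IF~\ref{pp1}--\ref{pp8}), (IF~\ref{pp10}), the extra requirements in (PPP~\ref{ppp1}), and the Markov property (PPP~\ref{ppp3}), while (PPP~\ref{ppp2}) is trivial and (IF~\ref{pp11}) holds through $p\in P$ — is thus inherited verbatim from $\rho_1$. The only conditions not of this form are the faithfulness requirements (IF~\ref{pp9}); but these follow automatically, since $\pi$ is injective on each finite cyclic vertex group (its generator keeps its order, being in $\cG$), and a non-trivial finite-order orientation-preserving circle homeomorphism has no fixed point, hence acts freely on any non-empty open set. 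So $\Theta$ is a proper ping-pong partition for $\rho_2$, and the identity map $\cI\to\cI$ is a ping-pong equivalence between $\Theta$ (for $\rho_1$) and $\Theta$ (for $\rho_2$) in the sense of Definition~\ref{d:equivalence_PPP}: (PPE~\ref{ppe1}) is clear, and (PPE~\ref{ppe3}) holds because $\rho_2(g)(I)=\rho_1(g)(I)$ as arcs for every $I\in\cI$ and $g\in\cG$.

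By Theorem~\ref{t:bpvfreeB} applied to $\rho_1$ and $\rho_2$, the two representations are semi-conjugate; since $\rho_2=\pi$ takes values in $G_0$, this means $G$ is semi-conjugate to the subgroup $\rho_2(G)\subset G_0$, which is the first assertion.

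The part I expect to be the main obstacle is making the semi-conjugacy $C^0$-close to the identity. The plan here is a limiting argument: run the construction for a sequence $\eps=\eps_n\to 0$, getting representations $\rho_2^{(n)}\to\rho_1$ (uniformly on $\cG$, hence on every group element, since $\Homeo_+(\T)$ is a topological group) together with semi-conjugacies $h_n$ given by Theorem~\ref{t:bpvfreeB} which, following the identity ping-pong equivalence, can be normalised so that each $h_n$ fixes $\partial\cI$ and maps every $I\in\cI$ onto itself (hence every gap onto itself as well). The family $\{h_n\}$ of monotone degree-one maps is precompact in the uniform topology by Helly's selection theorem, and any limit $h_\infty$, obtained by passing to the limit in the intertwining relation, is a monotone degree-one self-semi-conjugacy of $\rho_1$ still fixing $\partial\cI$ and preserving each interval of $\cI$. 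The point that then needs to be established — and this is where the rigidity provided by the ping-pong coding of \cite{MarkovPartitions1} must be used — is that the only such self-semi-conjugacy compatible with the identity equivalence is the identity map; granting this, $h_n\to\id$ uniformly, so for $n$ large $h_n$ (composed with the preliminary conjugation $\psi$, itself chosen close to $\id$) is the desired semi-conjugacy close to the identity. I would expect this last step to require either a direct inspection of the construction of the semi-conjugacy in the companion work, checking that it depends continuously on the representation, or an independent uniqueness statement for semi-conjugacies respecting a fixed ping-pong equivalence; everything else in the argument is routine.
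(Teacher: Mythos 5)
Your proof of the main assertion takes essentially the same route as the paper: conjugate by a homeomorphism $\psi$ close to the identity so that $\cG^2$ applied to the endpoint set $\Delta$ lands in $D_0$; invoke Proposition~\ref{p:realization lemma}; observe that the resulting $\pi$ agrees with the original representation on $\Delta$, so that $\Theta$ remains a proper ping-pong partition for $\pi(G)$, and conclude by Theorem~\ref{t:bpvfreeB} with the identity ping-pong equivalence. Your treatment is in fact somewhat more careful than the paper's: you explicitly verify each of the interactive-family axioms, correctly noting that the purely set-theoretic ones ((IF 1--8), (IF 10), (PPP 1--3)) transfer verbatim because $\pi(s)$ and $s$ agree on $\Delta$, that (IF 11) can be secured by adding the witness point $p$ to $P$, and that (IF 9) follows because $\pi$ preserves the order of each finite cyclic vertex element and a non-trivial finite-order orientation-preserving circle homeomorphism is fixed-point free. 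The paper compresses all of this into ``it easily follows'' together with a pointer to the companion work.

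For the final claim --- that the semi-conjugacy can be chosen $C^0$-close to the identity --- you correctly diagnose the point requiring additional input. Your proposed limiting argument via Helly's selection theorem is a plausible route, but as you say yourself it is not complete: after normalization the limit $h_\infty$ is a degree-one monotone self-intertwiner of $\rho_1$ fixing $\Delta$ and preserving each interval of $\cI$, and one still needs a uniqueness statement to force $h_\infty=\id$. On the minimal set this uniqueness should come from the ping-pong coding (the orbit of $\Delta$ is dense in $\Lambda$ and must be fixed pointwise), but on the gaps of an exceptional minimal set there is genuine flexibility, so one has to normalize inside each gap as well, or replace the abstract compactness argument by a direct inspection of the construction of the semi-conjugacy in the companion work and check that it depends continuously on the representation (equivalently, that the iterated refinements $\Theta_k$ for $\rho_1$ and $\pi$ stay uniformly close and their meshes tend to zero). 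Note that the paper itself offers no more than a one-line assertion here, so your flagged gap does not correspond to a step in the paper's argument that you failed to reproduce; it corresponds to a step the paper leaves implicit. Everything else in your write-up is correct and matches the paper's proof.
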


\begin{proof}
Let $\cG$ be the system of generators associated with the marking $\alpha:G\to\Isom_+(T)$. Denote by $\Theta$ a proper ping-pong partition and let $\Delta$ be the collection of endpoints of the atoms of the partition $\Theta$. By density of $D_0$, there exists  an orientation preserving circle homeomorphism $\theta:\T\to\T$, such that $\theta(\mathcal{G}^2(\Delta))\subset D_0$,  and with $\theta$ arbitrarily close to $\id$ in the uniform distance. Thus, upon conjugating by $\theta$, we can assume that $\mathcal{G}^2(\Delta)\subset D_0$. It follows  from Proposition~\ref{p:realization lemma} that for every $\eps>0$ there exists a morphism $\pi:G\to G_0$ satisfying conditions (\ref{i:rotation}-\ref{i:restriction}) of its statement. In particular, for every $s\in \cG$, the image $\pi(s)$ is $\eps$-close to $s$  and  we have 
\begin{equation}\label{eq:restriction_ping_pong}
	\pi(s)\restriction_{\Delta}=s\restriction_\Delta.
\end{equation} 
It easily follows that $\Theta$ is also a proper interactive family for $\pi(G)$, and that $\pi:G\to G_0$ is injective (see \cite[Theorem 5.4]{MarkovPartitions1}). 
Using \eqref{eq:restriction_ping_pong}, we deduce that $(G,\alpha,T)$ and $(\pi(G),\alpha\circ \pi^{-1},T)$ admit equivalent proper ping-pong partitions.
Thus,
the subgroups $G$ and $\pi(G)$ are semi-conjugate by Theorem~\ref{t:bpvfreeB}. Moreover, as we have that $s$ and $\pi(s)$ are $\eps$-close for every $s\in \cG$, it follows that we can choose the semi-conjugacy arbitrarily close to the identity in the uniform distance.
\end{proof}

\begin{proof}[Proof of Theorem~\ref{t:Dippolito}]
This follows directly from Lemma \ref{e:Thompson}, which states that Thompson's group $T$ is $\Z[\frac12]$-combinatorial, Theorem \ref{t:bpvfreeA}, which gives a proper ping-pong partition, and Theorem~\ref{t:realization theorem}, which gives the realization in $T$.
\end{proof}

\subsection{Real-analytic regularity}

In this section we will show that the group of real-analytic diffeomorphisms $\Diff_+^\omega(\T)$ is combinatorial. We start with an interpolation result in real-analytic regularity, from which we will deduce condition (C \ref{interpolation}).  This will be also used for the proof of Theorem \ref{t.minimal_actions}.

\begin{lem}\label{l:interpolationCw}
Given finitely many points $0\le x_1<\dots<x_n<1$ and $0\le y_1<\dots<y_n<y_1+1$ there exists a real-analytic function $f\in C^\omega(\R)$ such that $f'(x)>0$ for all $x\in\R$, $f(x)-x$ is $1$-periodic and for all $j\in\{1,\ldots,n\}$ the following conditions hold:
\begin{enumerate}[\rm (1)]
	\item\label{interpolate} $f(x_j)=y_j$;
	\item\label{derivative} $f'(x_j)=1$;
	\item\label{derivative2} if $y_{j+1}-y_j>x_{j+1}-x_j$, then  $f'(x)>1$ for every $x\in (x_j,x_{j+1})+\Z$;
	\item\label{derivative3} if $y_{j+1}-y_j<x_{j+1}-x_j$, then  $f'(x)<1$ for every $x\in (x_j,x_{j+1})+\Z$.
\end{enumerate}
\end{lem}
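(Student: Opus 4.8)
The statement is a purely one-dimensional interpolation problem: we need a real-analytic circle diffeomorphism $f$ (lifted to $\R$) that matches prescribed values $f(x_j)=y_j$, has derivative exactly $1$ at each $x_j$, and moreover has derivative strictly above (resp.\ below) $1$ on each interval $(x_j,x_{j+1})$ according to whether $f$ is locally expanding or contracting on that interval. The natural way to get a diffeomorphism with prescribed derivative is to prescribe $\log f'$ and integrate: write $f(x)=f(x_1)+\int_{x_1}^x e^{\psi(t)}\,dt$ for a real-analytic $2\pi$-periodic function $\psi$, so that automatically $f'>0$ everywhere and $f(x)-x$ is $2\pi$-periodic provided $\int_0^{2\pi}e^{\psi}=2\pi$. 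Condition~\eqref{derivative} becomes $\psi(x_j)=0$, and conditions~\eqref{derivative2}--\eqref{derivative3} become sign conditions on $\psi$ on each arc. The remaining unknowns are the ``mass'' constraints $\int_{x_j}^{x_{j+1}}e^{\psi}=y_{j+1}-y_j$ for each $j$, which we must solve simultaneously with the normalization $\sum_j(y_{j+1}-y_j)=2\pi$ (automatic since the $y_j$ are a cyclically ordered tuple in an interval of length $2\pi$).

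First I would fix, for each arc $[x_j,x_{j+1}]$, a real-analytic ``bump'' $\beta_j$ supported (as a germ, analytically) near the interior of that arc, vanishing to appropriate order at the endpoints $x_j,x_{j+1}$ so that gluing keeps everything real-analytic across the $x_j$ — concretely one can take $\psi$ to be a finite Fourier polynomial, which is automatically real-analytic and $2\pi$-periodic, and avoid any gluing issue altogether. So the cleaner route: look for $\psi$ in the finite-dimensional space of trigonometric polynomials of degree $\le N$ for $N$ large. The conditions $\psi(x_j)=0$ are $n$ linear conditions; the sign conditions on the arcs are open conditions; the mass conditions $\int_{x_j}^{x_{j+1}}e^{\psi}=y_{j+1}-y_j$ are $n$ nonlinear conditions (one of which is redundant given the others plus periodicity of $f$, i.e.\ there are really $n-1$ independent ones plus the global normalization). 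For $N$ large enough the evaluation-and-sign part is easily arranged; the mass part I would solve by a continuity/degree argument or by an explicit scaling trick.

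The key technical step — and the main obstacle — is to simultaneously satisfy the $n$ integral (mass) constraints while keeping $\psi(x_j)=0$ and the prescribed signs on the arcs. I would handle this as follows. On each arc where $y_{j+1}-y_j>x_{j+1}-x_j$ we need $\int_{x_j}^{x_{j+1}}e^\psi>\int_{x_j}^{x_{j+1}}1$, which is forced in the right direction by $\psi>0$ there; symmetrically for the contracting arcs. So take a fixed $\psi_0$ (trigonometric polynomial) vanishing at the $x_j$, strictly positive on the expanding arcs, strictly negative on the contracting arcs, and identically zero on arcs where $y_{j+1}-y_j=x_{j+1}-x_j$. Then for each arc introduce a real amplitude $\lambda_j\ge 0$ and consider $\psi=\sum_j \lambda_j\, \psi_0\!\restriction_{\text{arc }j}$ made global and real-analytic (again, realizable as a single trig polynomial by choosing the $\psi_0$ pieces to be localized and adding them; or work on $\R$ with a genuine partition-of-unity-type bump that is real-analytic because we only need analyticity, and an entire periodic function can be built by Fourier series). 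The map $\lambda_j\mapsto \int_{x_j}^{x_{j+1}}e^{\lambda_j\psi_0}$ is continuous, strictly increasing in $\lambda_j$ on expanding arcs, strictly decreasing on contracting arcs, and ranges over $(\,|I_j|,\infty)$ (resp.\ $(0,|I_j|)$) as $\lambda_j$ runs over $(0,\infty)$. Hence for each $j$ there is a unique $\lambda_j$ giving exactly $\int_{x_j}^{x_{j+1}}e^{\lambda_j\psi_0}=y_{j+1}-y_j$; the arcs decouple completely, so all $n$ constraints are met at once, and the global normalization $\sum(y_{j+1}-y_j)=2\pi$ is exactly the hypothesis that the $y_j$ lie in order in an interval of length $2\pi$.

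Finally I would assemble: set $\psi:=\sum_j\lambda_j\psi_0$ restricted arc-by-arc, check it is genuinely a real-analytic $2\pi$-periodic function on $\R$ (this is where I'd be careful: the pieces must agree to infinite order, indeed analytically, at the breakpoints $x_j$; choosing each localized $\psi_0$-piece to vanish identically in analytic neighborhoods of the $x_j$, or—cleanest—building $\psi$ directly as a rapidly converging Fourier series with the required interpolation data via a standard linear-algebra argument on a high-degree trig-polynomial space, removes this worry), define $f(x)=y_1+\int_{x_1}^x e^{\psi(t)}\,dt$, and verify all four conclusions: \eqref{interpolate} from the mass constraints, \eqref{derivative} from $\psi(x_j)=0$, and \eqref{derivative2}--\eqref{derivative3} from the sign of $\psi$ on each arc together with $e^\psi\gtrless 1\iff \psi\gtrless 0$. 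The one point needing a remark is that $f'(x_j)=1$ and $f'>1$ on an adjacent expanding arc are compatible with analyticity of $f'$ (no contradiction: $f'-1$ simply has a zero at $x_j$, of some finite even order dictated by how $\psi$ vanishes there), so one should let $\psi$ vanish to, say, order exactly $2$ at each $x_j$ on the expanding/contracting side and order $2$ on the neutral side as well, which is automatic for a generic choice of the trig polynomial $\psi_0$.
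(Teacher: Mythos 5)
Your reformulation $f'=e^{\psi}$ is a genuinely different starting point from the paper's ansatz $f'(x)=1+q(x)r(x)$, and it has the aesthetic advantage that positivity of $f'$ is automatic. But the step you lean on to solve the mass constraints --- ``the arcs decouple completely'' --- is where the argument breaks, and the break is fatal rather than cosmetic. A real-analytic function cannot be prescribed arc-by-arc: if $\psi$ and $\psi_0$ are real-analytic and $\psi=\lambda_1\psi_0$ on $(x_1,x_2)$ while $\psi=\lambda_2\psi_0$ on $(x_2,x_3)$, then $\psi-\lambda_1\psi_0$ vanishes on an open set and hence identically, forcing $\lambda_1=\lambda_2$. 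Equivalently, there is no real-analytic partition of unity and no nonzero real-analytic bump supported in a single arc, so your ``localized $\psi_0$-pieces'' do not exist. Thus the map $(\lambda_1,\dots,\lambda_n)\mapsto\bigl(\int_{x_j}^{x_{j+1}}e^{\psi}\bigr)_j$ is not a product of $n$ scalar monotone maps, and the claim that each constraint is solved by its own $\lambda_j$ does not hold. You flag the worry yourself (``this is where I'd be careful''), but neither of the two escape hatches you offer closes it: local analytic flattening is impossible for the reason above, and ``a standard linear-algebra argument on a high-degree trig-polynomial space'' is precisely the nontrivial content you have not supplied --- and in your $e^{\psi}$ formulation it is not even linear algebra, since the constraints $\int_{x_j}^{x_{j+1}}e^{\psi}=y_{j+1}-y_j$ are nonlinear in $\psi$, so one would need an implicit-function-theorem or degree-theoretic argument.

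For comparison, the paper sidesteps the decoupling problem while keeping the constraints \emph{linear}. Writing $f'=1+qr$ with $q$ a trigonometric polynomial having exactly the zeros $x_1,\dots,x_n$, the value constraints $f(x_{j+1})-f(x_j)=y_{j+1}-y_j$ become a linear system $\sum_j S_j\,a^\eps_{jk}=(y_{k+1}-y_k)-(x_{k+1}-x_k)$ in the coefficients $S_j$ of $r=\sum_j S_j r^\eps_j$, where $r^\eps_j$ is a trigonometric approximation to a near-characteristic function of arc $j$. The matrix $(a^\eps_{jk})$ is \emph{approximately} diagonal (it converges to $\mathrm{diag}(x_{j+1}-x_j)$ as $\eps\to0$), so it is invertible for small $\eps$ and one reads off both the solvability and the signs $S_j^\eps\to\tfrac{y_{j+1}-y_j}{x_{j+1}-x_j}-1$ that give conditions (3)--(4). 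That perturbative, approximately-diagonal linear system is the honest replacement for the exact decoupling your argument assumes, and is the piece you would have to add to make your route work.
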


\begin{proof} Using a trigonometrical approximation, we shall find a real-analytic function with the required properties. We will first construct the derivative in the form $f'(x)=1+q(x)r(x)$ where $q$ is a $1$-periodic trigonometric polynomial with roots at $x_1,\ldots,x_n$ and $r$ is everywhere positive. More precisely, consider the subset $S\subset \{1,\ldots,n\}$ formed by all $j\in \{1,\ldots,n\}$ such that
\[
\frac{y_j-y_{j-1}}{x_j-x_{j-1}}-1\quad\text{and}\quad\frac{y_{j+1}-y_j}{x_{j+1}-x_j}-1
\]
have distinct sign. Then set $\epsilon_j=1$ if $j\in S$, and $\epsilon_j=2$ if $j\in \{1,\ldots,n\}\setminus S$. For an appropriate choice of sign, the trigonometric polynomial
$q(x)=\pm\prod_{j=1}^n\sin^{\epsilon_j}\left (\pi(x-x_j)\right )$ has the desired properties.

Let us look for a $1$-periodic function $r\in C^\omega(\R)$ of the form $r(x)=\sum_{j=1}^n S_j\, r_j(x)$, where $S_j>0$ and the $r_j\in C^\omega(\R)$ will be constructed as approximate characteristic functions of the subsets $(x_j,x_{j+1})+\Z$. 
Given a sufficiently small $\eps>0$ and $j\in\{1,\ldots,n\}$, consider the following $1$-periodic function $\varrho_j^\eps\in C^0(\R)$:	
\begin{equation*}\label{eq:r_j_epsilon}
	\varrho_j^\eps(x)=
	\left \{\begin{array}{lr}
		\eps &\text{for } x\not\in [x_j, x_{j+1}]+\Z,\\[0.5em]
		\eps+\left(\dfrac1{|q(x_j+\eps)|}-\eps\right)\dfrac{x-x_j-m}\eps &\text{for } x\in [x_j, x_j+\eps]+m,\\[0.5em]
		\dfrac1{|q(x)|} &\text{for } x\in [x_j+\eps, x_{j+1}-\eps]+m,\\[0.5em]
		\eps-\left(\dfrac1{|q(x_{j+1}-\eps)|}-\eps\right)\dfrac{x-x_{j+1}-m}\eps &\text{for } x\in [x_{j+1}-\eps, x_{j+1}]+m,
	\end{array}\right.  (m\in \Z).
\end{equation*}
Using Weierstrass approximation theorem, we can take a $1$-periodic trigonometric polynomial  $r_j^\eps\in C^\omega(\R)$ which is $\eps/2$-close to $\varrho_j^\eps$ (with respect to the uniform distance). Note that after this choice, the function $r_j^\eps$ is everywhere positive for sufficiently small $\eps>0$.
Also, by construction, the function $|q|\cdot r_j^\eps$ tends pointwise to $\mathbf{1}_{(x_j,x_{j+1})+\Z}$ as $\eps\to 0$, and therefore the quantity $a_{jk}^\eps:=\int_{x_k}^{x_{k+1}} |q(x)|r^\eps_j(x)\, dx$ (where we set $x_{n+1}=x_1+1$) tends to $(x_{k+1}-x_k)\delta_{jk}$ as $\eps\to 0$.

In order to ensure property \eqref{interpolate} we consider the following system of linear equations in the unknowns $S_1,\ldots,S_n$:
\begin{equation}\label{eq:S_j}
	\sum_{j=1}^n S_j\, a_{jk}^\eps=\left\vert(y_{k+1}-y_{k})-(x_{k+1}-x_k)\right\vert\quad (k\in \{1,\ldots,n\}),
\end{equation}
where we write $y_{n+1}=y_1+1$. By the previous observation, as $\eps\to 0$, the matrix of the system \eqref{eq:S_j} tends to the diagonal matrix $\mathsf{diag}(x_2-x_1,\ldots, x_{n+1}-x_n )$, and therefore for sufficiently small $\eps>0$, the system  \eqref{eq:S_j} admits a unique solution, which we denote by $S_1^\eps,\ldots,S_n^\eps$. Note that for every $j\in\{1,\ldots,n\}$, we have
\[
S_j^\eps\to \left\vert \dfrac{y_{j+1}-y_j}{x_{j+1}-x_j}-1\right\vert\quad\text{as }\eps\to 0.
\]
Set then $r^\eps(x):=\sum_{j=1}^nS^\eps_j\,r^\eps_j(x)$, and observe that  for every $x\in (x_j,x_{j+1})+\Z$ we have
\[
1+q(x)r^\eps(x)\to\frac{y_{j+1}-y_j}{x_{j+1}-x_j}\quad\text{as }\eps\to 0.
\]
Therefore, we can find $\eps>0$ such that the following conditions are satisfied:
\begin{itemize}
	\item the system (\ref{eq:S_j}) has a solution (hence the function $r^{\eps}$ is defined),
	\item for every $x\in \R$, we have $1+q(x)r^{\eps}(x)>0$, and moreover $1+q(x)r^{\eps}(x)>1$ (respectively, $<1$) for every $x\in (x_j,x_{j+1})+\Z$, where $j\in \{1,\ldots,n\}$ is such that $y_{j+1}-y_j>x_{j+1}-x_j$ (respectively, $y_{j+1}-y_j<x_{j+1}-x_j$).
\end{itemize} 
We can then take $r=r^{\eps}$ and define
\[f:x\in\R\mapsto y_1+\int_{x_1}^x 1+q(\theta)r(\theta)\, d\theta,\]
which gives the desired function.
\end{proof}

The next statement is the main result of this section.

\begin{prop}\label{r:analytic is combinatorial}
The group of real-analytic diffeomorphisms $\Diff_+^\omega(\T)$ is combinatorial.
\end{prop}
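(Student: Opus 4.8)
The plan is to verify the four conditions of Definition~\ref{d.combinatorial} for $D_0=\T$ and $G_0=\Diff_+^\omega(\T)$. Conditions (C \ref{invariance}) and (C \ref{order}) are immediate: $\T$ is invariant under every homeomorphism, and for each $q$ the rigid rotation $x\mapsto x+1/q$ is a real-analytic diffeomorphism of order $q$. Condition (C \ref{interpolation}) is exactly the content of Lemma~\ref{l:interpolationCw}: given two circularly ordered $n$-tuples of $\T\cong\R/\Z$, lift them to $\R$ and apply the lemma to obtain a real-analytic $f$ with $f'>0$, $f(x)-x$ periodic and $f(x_j)=y_j$ (the derivative normalizations furnished by the lemma are more than we need here). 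So the substance is condition (C \ref{conjugation}).

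Let $a,b\in\Diff_+^\omega(\T)$ have finite order $q$ and the same rotation number, and let $E$ be a finite set with $a^k\restriction_E=b^k\restriction_E$ for $k=1,\dots,q-1$. If $q=1$ then $a=b=\id$ and $h=\id$ works; if $E=\varnothing$ it suffices to take for $h$ the composition of the averaged conjugators of $a$ and $b$ to $x\mapsto x+1/q$. Assume therefore $q\ge2$ and $E\neq\varnothing$. I would make three harmless reductions. First, replace $E$ by its orbit $\langle a\rangle\cdot E$: it is still finite, still satisfies the compatibility hypothesis, and is invariant under both $\langle a\rangle$ and $\langle b\rangle$ (the latter because $b^k\restriction_E=a^k\restriction_E$). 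Second, replace $a,b$ by $a^m,b^m$, where $m$ satisfies $mp\equiv1\pmod q$ with $p/q=\rot(a)$; the hypotheses persist and, raising to the $p$-th power, any $h$ conjugating $a^m$ to $b^m$ conjugates $a$ to $b$, so we may assume $\rot(a)=\rot(b)=1/q$. Third, fix a point $p_0\in E$; since $a(p_0)=b(p_0)$ and both maps have rotation number $1/q$, the arc $I:=[p_0,a(p_0))$ is a fundamental domain for both $\langle a\rangle$ and $\langle b\rangle$.

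The key point for real-analytic regularity is the following. The naive $\PL$-type recipe (set $h=b^ka^{-k}$ on the $k$-th translate $a^k(I)$, as for Thompson's group in Example~\ref{e:piecewiseaffine}) does conjugate $a$ to $b$ and fix $E$, but it is not even $C^1$ at the ``seams'' $a^k(p_0)$, let alone $C^\omega$. Instead I would realize $h$ as a \emph{lift of a diffeomorphism of the quotient circle}, which is automatically seamless. A nontrivial finite-order orientation-preserving circle homeomorphism has no fixed point, so $\langle a\rangle$ and $\langle b\rangle$ act freely; hence $\T_a:=\T/\langle a\rangle$ and $\T_b:=\T/\langle b\rangle$ are real-analytic circles with real-analytic degree-$q$ covering maps $\pi_a,\pi_b\colon\T\to\T_a,\T_b$, each real-analytically diffeomorphic to $\T$ (e.g.\ through the averaged conjugators to $x\mapsto x+1/q$, or by the classical uniqueness of real-analytic structures on $S^1$). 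Because $I$ is a fundamental domain for both actions, $\pi_a\restriction_I$ and $\pi_b\restriction_I$ are orientation-preserving bijections, so the finite correspondence $\pi_a(z)\mapsto\pi_b(z)$, $z\in E\cap I$, preserves the cyclic order; by Lemma~\ref{l:interpolationCw} (transported through these identifications) it extends to an orientation-preserving real-analytic diffeomorphism $\bar h\colon\T_a\to\T_b$. Now lift $\bar h$ to $h\colon\T\to\T$ with $\pi_b\circ h=\bar h\circ\pi_a$: such lifts exist (covering criterion) and are $q$ in number, each real-analytic; and since $\pi_b\circ(hah^{-1})=\pi_b$, the map $hah^{-1}$ lies in $\langle b\rangle$ and has the same rotation number $1/q$ as $a$, hence equals $b$. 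Choose the lift with $h(p_0)=p_0$. Then $ha=bh$ forces $h$ to fix the $\langle a\rangle$-orbit of $p_0$, in particular $h(a(p_0))=a(p_0)$; being orientation-preserving and fixing the two endpoints of $I$, $h$ maps $I$ onto itself; hence for each $z\in E\cap I$ one gets $h(z)\in I\cap\langle b\rangle z=\{z\}$, as $I$ is a fundamental domain for $\langle b\rangle$. Thus $h$ fixes $E\cap I$, and then, by $ha=bh$ and the $\langle a\rangle$-invariance of $E$, all of $E$. This produces the desired $h$.

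For the quantitative clause: when $d_{C^0}(a,b)$ is small the averaged conjugators to $x\mapsto x+1/q$ are $C^0$-close (for fixed order $q$), so a conjugator $h_1$ of the above kind is close to $\id$; correcting $h_1$ so as to fix $E$, by an element of the centralizer of $a$ — that is, by a lift of a diffeomorphism of $\T_a$ close to the identity, produced by Lemma~\ref{l:interpolationCw} from data close to the diagonal — keeps the result close to $\id$. Alternatively one may invoke the mechanism of the proof of Lemma~\ref{l.C4}, which extracts the quantitative statement from the bare existence statement together with (C \ref{invariance}) and (C \ref{interpolation}). The main obstacle throughout is condition (C \ref{conjugation}); once the passage to the quotient circle is set up, it reduces entirely to the real-analytic interpolation of Lemma~\ref{l:interpolationCw}.
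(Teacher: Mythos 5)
Your argument is correct and establishes the proposition, but it takes a genuinely different — and notably heavier — route for condition (C~4) than the paper's. You do mention the averaged conjugator $h_a(x)=\tfrac1q\sum_{i=0}^{q-1}A^i(x)-i\tfrac{p}{q}\pmod\Z$ (as a tool for the case $E=\varnothing$ and for the quantitative clause), but you miss its crucial feature, which is the whole content of the paper's proof: if $a^i\restriction_E=b^i\restriction_E$ for all $i$, then $h_a$ and $h_b$ already agree on $E$, so $h:=h_b^{-1}\circ h_a$ conjugates $a$ to $b$ \emph{and} fixes $E$ with no further correction needed. Your construction instead descends to the quotient circles $\T_a$ and $\T_b$, interpolates there by Lemma~\ref{l:interpolationCw}, lifts through the degree-$q$ coverings, and then verifies (via the choice of lift fixing $p_0$, preservation of the fundamental domain, and $\langle a\rangle$-invariance of $E$) that the lift fixes $E$. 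All of that is sound, and it is a nice conceptual point of view (it identifies the centralizer of $a$ with $\Diff_+^\omega(\T_a)$), but it replaces a two-line computation with covering-space bookkeeping. The trade-off: your route makes the structural reason transparent (the ambiguity in the conjugator is exactly the centralizer, and the centralizer is the diffeomorphism group of a circle), whereas the paper's route is shorter and makes the quantitative clause essentially automatic — $a\mapsto h_a$ is visibly continuous in the $C^0$ topology for fixed $q$, so $d_{C^0}(a,b)$ small gives $h=h_b^{-1}h_a$ close to $\id$ directly.

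Two small caveats. First, your closing suggestion to ``invoke the mechanism of the proof of Lemma~\ref{l.C4}'' as an alternative derivation of the quantitative clause is circular: Lemma~\ref{l.C4} is stated and proved \emph{for} $D_0$-combinatorial groups, so it presupposes (C~4) including its quantitative part; it strengthens that clause to a uniformity over $E$, it does not produce it. Second, your main quantitative argument is only a sketch — ``correcting $h_1$ by an element of the centralizer close to the identity'' needs the observation (which you don't spell out) that the required correction data $\pi_a(z)\mapsto\pi_a(h_1^{-1}(z))$, $z\in E$, is well-defined on $\pi_a(E)\subset\T_a$ and close to the diagonal; this is fine but again becomes unnecessary once one notices the averaged conjugator already fixes $E$.
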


\begin{proof}
As we are considering $D_0=\T$, condition (C \ref{invariance}) is trivially satisfied. Lemma \ref{l:interpolationCw} above gives condition (C \ref{interpolation}). Condition (C \ref{order}) is clear (all rigid rotations are in $\Diff_+^\omega(\T)$).
It is well-known  that any two  real-analytic diffeomorphisms of finite order and same rotation number are $C^\omega$ conjugate (see e.g.\ \cite[II.6]{Herman}): let $a\in \Diff^\omega_+(\T)$ be of finite order and $\rot(a)=p/q$, denote by $A$ any lift of $a$ to the real line, then $h_a(x)=\frac{1}{q}\sum_{i=0}^{q-1}A^i(x)-i\frac{p}{q}\pmod\Z$ defines a $C^\omega$ conjugacy of $a$ to the rotation $R_{p/q}$ of angle $p/q \pmod\Z$.
Therefore given any two finite order elements $a$ and $b\in \Diff_+^\omega(\R)$ with same rotation number $\rot(a)=\rot(b)=p/q$, the composition $h=h_b^{-1}\circ h_a$ conjugates $a$ to $b$. We claim that it also satisfies the additional requirements in condition (C \ref{conjugation}). For this, let  $E\subset \T$ be a finite subset such that $a^i(x)=b^i(x)$ for every $i\in\{1,\ldots,q-1\}$, and denote by $A$ and $B$ two lifts of $a$ and $b$ respectively, to the real line. Then for every $x\in E$, we have
\[
h_a(x)=\frac{1}{q}\sum_{i=0}^{q-1}A^i(x)-i\frac{p}{q}\pmod\Z=\frac{1}{q}\sum_{i=0}^{q-1}B^i(x)-i\frac{p}{q}\pmod\Z=h_b(x)
\]
and thus $h(x)=h_b^{-1}h_a(x)=x$.
This gives condition (C \ref{conjugation}).
\end{proof}

\subsection{Minimality criterion}
Ping-pong partitions are an adapted tool to recognize semi-conjugate actions, but they do not provide full topological information. In particular,  it is unclear how to decide whether an action with ping-pong partition is minimal. In this direction we prove the following partial result.

\begin{thm}\label{t.minimal_actions} Every marked, finitely generated free subgroup of  $\Homeo_+(\Sc)$ with a proper ping-pong partition  is semi-conjugate to a subgroup of $\Diff^\omega_+(\Sc)$ whose action is minimal.
\end{thm}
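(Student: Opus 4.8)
The strategy is to realize a given proper ping-pong partition of a marked free group by a \emph{minimal} real-analytic action, by building the new action from scratch so as to force minimality, and then invoking Theorem~\ref{t:bpvfreeB} to conclude semi-conjugacy. First I would use the combinatorial realization machinery: since $\Diff^\omega_+(\Sc)$ is combinatorial (Proposition~\ref{r:analytic is combinatorial}), Theorem~\ref{t:realization theorem} already produces a real-analytic representation $\rho\colon G\to\Diff^\omega_+(\Sc)$ that is semi-conjugate to the given action and carries an equivalent proper ping-pong partition $\Theta=\{U^e_v,V_s\}$. Because $G$ is \emph{free}, the marking has a single vertex with trivial vertex group, so $\cG$ consists only of the stable letters $\alpha_s(A_s)s = s$ for $s\in S$, and the ping-pong partition reduces to the classical Schottky-type picture: pairwise disjoint open sets $V_s$ (finite unions of intervals) with $s(O)\subset V_s$ for every $O\in\Theta\setminus\{V_{\overline s}\}$, plus the properness clause (IF~\ref{pp11}) giving a point $p$ outside all $V_s$ with $s(p)\in V_s$.

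The heart of the argument is to upgrade $\rho$ to a \emph{minimal} action without destroying the ping-pong combinatorics. The plan is to perturb the generators $\rho(s)$ — staying $C^\omega$ and staying $\eps$-close — so that the new generators $\hat\rho(s)$ still satisfy (PPP~1)--(PPP~3) with an equivalent partition, but in addition the only closed invariant set is all of $\Sc$. Two things can go wrong with minimality of a ping-pong action: there can be an invariant Cantor set (wandering gaps), or there can be "non-expanded" regions, i.e.\ the complement of $\bigcup_O O$ together with intervals that are merely permuted without ever being expanded. To kill these, I would arrange the perturbation so that (i) every connected component $I$ of every atom is \emph{expanded} by some generator, i.e.\ $s(I)$ is $\cI$-Markovian rather than contained in a single atom — this is the case-distinction in (PPP~3), and one wants to be on the Markovian side as much as possible; and (ii) the complementary gaps have no wandering dynamics. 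Concretely, using the real-analytic interpolation Lemma~\ref{l:interpolationCw} one has enough freedom to prescribe, near the finitely many endpoints in $\Delta$, that each $\hat\rho(s)$ maps the relevant boundary points exactly as before (preserving the equivalence class of the partition, by \eqref{eq:restriction_ping_pong}-type control) while on the interior of each interval one forces strict expansion/contraction with the derivative conditions \eqref{derivative2}--\eqref{derivative3}; this makes the action \emph{topologically expansive} on the union of atoms. A standard ping-pong/expansivity argument then shows every orbit of an interior point is dense, and a short separate argument (using the properness point $p$ and condition (IF~\ref{pp11})) handles the finitely many gaps, so the action is minimal. Finally, since $\hat\rho$ still has a proper ping-pong partition equivalent to the original one, Theorem~\ref{t:bpvfreeB} gives that $\hat\rho$ is semi-conjugate to the original action, completing the proof.

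The step I expect to be the main obstacle is (ii): ensuring no wandering behaviour survives in the gaps $\Sc\setminus\bigcup_{O\in\Theta}O$ and in the intervals that ping-pong permutes without expansion. Being in the Markovian alternative of (PPP~3) for the interval \emph{atoms} controls the dynamics inside $\bigcup O$, but the complement is a union of gaps whose orbits must be shown to accumulate everywhere; here one must exploit that $G$ is free (no torsion, no vertex-group relations to respect) and that the partition is \emph{proper} — in particular clause (IF~\ref{pp10}) or (IF~\ref{pp11}), which guarantees the images of the relevant sets miss a point, so one can "pump" a gap into the expanding region by an appropriate word. Making this precise — choosing the perturbation and the words simultaneously so that the gap dynamics is eventually absorbed into the expanded part — is the delicate point; everything else is an application of the already-established realization theorem together with the real-analytic interpolation lemma.
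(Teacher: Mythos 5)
Your general architecture is right in broad outline: produce $C^\omega$ generators via the real-analytic interpolation Lemma~\ref{l:interpolationCw} with explicit derivative control, keep the ping-pong combinatorics, and close with Theorem~\ref{t:bpvfreeB} to get the semi-conjugacy. But the key step --- forcing minimality --- has two genuine gaps, and one of your proposed mechanisms would in fact fail.

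First, you write that you would ``arrange the perturbation so that every connected component $I$ of every atom is expanded by some generator'' and that you want to ``be on the Markovian side as much as possible''. This is not available: whether $\rho(s)(I)$ is contained in a single atom or is $\cI$-Markovian is part of the equivalence class of the partition (see (PPE~2) in Definition~\ref{d:equivalence_PPP}), and a perturbation that changes it cannot produce an \emph{equivalent} proper ping-pong partition, so Theorem~\ref{t:bpvfreeB} no longer applies. The paper's way around this is \emph{not} a perturbation but a collapse: one first passes to the \emph{minimalization} of the given action (Lemma~\ref{lem_collapse}), pushing the atoms forward through the semi-conjugating map $h$; this produces a genuinely different, \emph{strictly Markovian} partition (proper, degenerate gaps, no neutral intervals). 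Only after that reduction does the interpolation/perturbation enter, and only then can every interval be placed in a Markovian sequence. Without the collapse step, the combinatorics of the original partition may simply be incompatible with a minimal action, so no amount of careful perturbing will help.

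Second, your proposed mechanism for ruling out a Cantor minimal set --- ``exploit that $G$ is free\dots and that the partition is proper, in particular clause (IF~\ref{pp10}) or (IF~\ref{pp11})'' --- cannot work. Properness, (IF~\ref{pp10}), and (IF~\ref{pp11}) are about faithfulness/injectivity, not minimality; Example~\ref{ex.fake_ppp} in the paper is precisely a \emph{proper} ping-pong partition with degenerate gaps whose underlying action has an exceptional minimal Cantor set. What actually forces minimality in the paper's proof is quantitative: one conjugates to a \emph{geometric} partition (Lemma~\ref{lem_geometric}, lengths strictly increase along shortest Markovian sequences), then uses Lemma~\ref{l:interpolationCw} to make the partition \emph{infinitesimally geometric}, i.e.\ $s'>1$ on each interval that is enlarged. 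Lemma~\ref{l.minimality} then runs the argument: first returns have derivative $<1$, hence no fixed point inside the interval by the mean value theorem; Lemma~\ref{l_extrem_lambda} concludes that the endpoints $\Delta_0$ lie in $\Lambda$; therefore every gap of $\Lambda$ sits inside an interval of $\cI$, and iterating the Markovian expansion produces infinitely many disjoint gaps of length bounded away from $0$, a contradiction. This derivative-and-first-return argument is the core idea you are missing; expansivity ``inside the atoms'' plus properness of the family is not enough to push the gap orbits into accumulating everywhere.

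A smaller point: routing through Proposition~\ref{r:analytic is combinatorial} and Theorem~\ref{t:realization theorem} before perturbing is unnecessary --- the paper builds the $C^\omega$ realization directly from Lemma~\ref{l:interpolationCw}, interpolating on the finite set $\Delta_0\cup s^{-1}(\Delta_0)$ while imposing the derivative conditions. That is harmless, but it does not repair the two gaps above.
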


We will prove Theorem \ref{t.minimal_actions} later in the subsection. Before this, we will make some general considerations for recognizing minimal actions of virtually free groups.

\begin{rem}
At the present moment we are unable to extend Theorem \ref{t.minimal_actions} to every \emph{virtually} free group; technical difficulties arise when there are torsion elements. In fact, it could be that not every combinatorial configuration is realizable in real-analytic regularity: an elementary instance of this is Kontsevich's ``m\'etro ticket'' theorem, discussed  by Ghys in \cite{promenade}, and our approach is faced to an obstruction of similar nature. In some particular cases, it is possible to construct a minimal action, for instance when the group is a central extension of a free group, or when the group is of the form $F_n\rtimes \Z_m$, where the cyclic group acts on $F_n$ by permutation of a free basis (in which case one has simply to implement the symmetries of the free generating set in their real-analytic realizations as in Lemma \ref{l:interpolationCw}).
\end{rem}

\subsubsection{Ping-pong partitions and Markovian sequences}
Let $(G,\alpha,T)$ be a marked, finitely generated, virtually free subgroup of $\Homeo_+(\Sc)$ acting with a ping-pong partition $\Theta$. Recall that $\cI$ denotes the collection of connected components of atoms of $\Theta$ and $\cJ$ the collection of gaps of the partition. It is proved in \cite[Proposition 6.5]{MarkovPartitions1} that the minimal set $\Lambda$ for $G$ is contained in the union of the closures of intervals of $\cI$. Hence for such an action to be minimal, it is necessary that the gaps of the partition (they do not need to be real gaps of $\Lambda$) be degenerate (i.e.\ reduced to points). Unfortunately this condition is not sufficient as the next example shows.

\begin{ex}\label{ex.fake_ppp}
Consider a classical Schottky configuration.
For that we take a circularly ordered family of open arcs $\cI_0=\{I_s,I_t,I_{s^{-1}},I_{t^{-1}}\}$ in $\Sc$, with pairwise disjoint closure, and we denote by $\cJ_0$ the set of gaps of $\cI_0$ indexed as follows. For $\gamma\in\{s,s^{-1},t,t^{-1}\}$, denote by $J_\gamma$  the element of $\cJ_0$ located on the left of $I_\gamma$. Consider two homeomorphisms $s$ and $t$ in $\Homeo_+(\Sc)$ such that 
\begin{align*}
	s(I_s)&=I_t\cup J_s\cup I_s\cup J_{t^{-1}}\cup I_{t^{-1}},\\
	t(I_t)&=I_{s^{-1}}\cup J_t\cup I_t\cup J_{s}\cup I_{s},\\
	s^{-1}(I_{s^{-1}})&=I_{t^{-1}}\cup J_{s^{-1}}\cup I_{s^{-1}}\cup J_{t}\cup I_{t},\\
	t^{-1}(I_{t^{-1}})&=I_{s}\cup J_{t}\cup I_{t^{-1}}\cup J_{s^{-1}}\cup I_{{s^{-1}}}.
\end{align*}
See Figure \ref{fig:schottky}. Then the group $G$ generated by $S=\{s,s^{-1},t,t^{-1}\}$ is free. The action of $G$ has an exceptional minimal set $\Lambda$ and $\cI_0$ defines a ping-pong partition. For $\gamma\in S$, define $U_\gamma=\mathsf{Int}(J_\gamma\cup I_\gamma)$.  It is a simple exercise to verify that, although $G$ has an exceptional minimal set, $\Theta=\{U_s,U_t,U_{s^{-1}},U_{t^{-1}}\}$ is a ping-pong partition for the action of $G$ with degenerate gaps. 

\begin{figure}
	\[
	\includegraphics{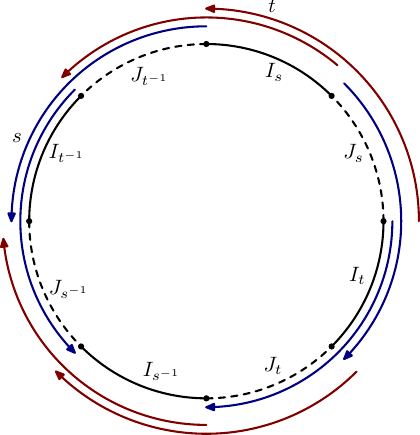}\quad\quad \includegraphics{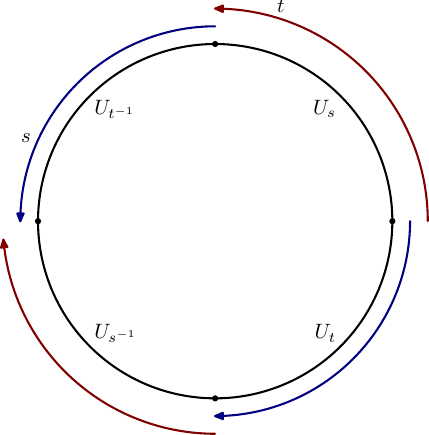}\]
	\caption{Partitions for Example \ref{ex.fake_ppp}.}
	\label{fig:schottky}
\end{figure}
\end{ex}

This is an example of proper ping-pong partition with \emph{neutral intervals} (the intervals $J_\gamma$) which motivates the following definition.

\begin{dfn}[Markovian sequences and neutral intervals] Let $(G,\alpha,T)$ be a marked, finitely generated, virtually free subgroup of $\Homeo_+(\Sc)$ with ping-pong partition $\Theta$. We let $\cG$ be the corresponding generating set of $G$.
A \emph{Markovian sequence} for an interval $I\in\cI$ is a finite sequence of generators $s_1,\ldots, s_k\in \cG$ such that for every $l\in\{1,\ldots,k-1\}$, the image $s_l s_{l-1}\cdots s_1(I)$ is an interval of $\cI$ and the image $s_k(s_{k-1}\cdots s_1(I))$ is $\cI$-Markovian.
An interval $I\in\cI$ without Markovian sequence is called \emph{neutral}.
\end{dfn}

\noindent
\begin{minipage}[t]{.45\textwidth}
\centering\raisebox{\dimexpr \topskip-\height}{%
	\includegraphics{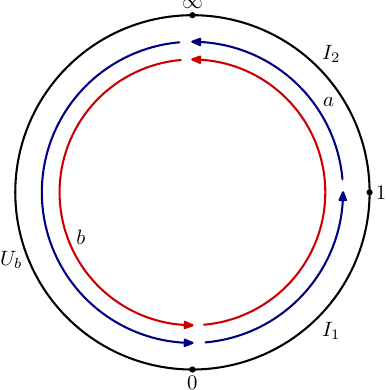}}
\captionof{figure}{Ping-pong partition for $\PSL(2,\Z)\cong\Z_2*\Z_3$ (Example \ref{ex:Farey}).}
\label{fig:Farey}
\end{minipage}\hfill
\begin{minipage}[t]{.55\textwidth}
\begin{ex}\label{ex:Farey}
	Consider the classical Farey partition $\Theta=\{U_a,U_b\}$ for the action of $\PSL(2,\Z)\cong \Z_2*\Z_3$, where $a$ and $b$ denote generators of order 3 and 2 respectively. Considering the parametrization of the circle $\T\cong \R\cup \{\infty\}$, the subset $U_a$ is the union of the two disjoint intervals $I_1=(0,1)$ and $I_2=(1,\infty)$, whereas $U_b=(\infty,0)$ is a single interval. The gaps of the partition are the points $0$, $1$, and $\infty$.
	We have the relations
	\begin{align*}
		b(U_b)&= I_1\cup \{1\}\cup I_2,\\
		a(I_1)&= I_2,\\
		a(I_2)&= U_b.
	\end{align*}
	See Figure~\ref{fig:Farey}. A Markovian sequence for the interval $U_b$ is given by $b$, while for the intervals $I_1$ and $I_2$, we can take $a^2,b$ and $a,b$ respectively (no sequences of length 1 exist).  
\end{ex}
\end{minipage}

\noindent
\begin{minipage}[t]{.55\textwidth}
\begin{ex}\label{ex:MMRT}
	An interesting example of ping-pong partition with intervals with non-trivial Markovian sequences (that is, admitting no sequences of length 1), appears in \cite{MMRT}. It is a ping-pong partition for an action of the free group $F_2=\langle f,g\rangle$. Take eight circularly ordered points $x_1,\ldots,x_8\in \T$ and for $i\in \{1,\ldots, 8\}$ write $I_i=(x_i,x_{i+1})$ (indices are taken modulo 8). The partition is defined by the following relations:
	\begin{align*}
		f(I_1)&= I_8\cup \{x_1\}\cup I_1\cup \{x_2\}\cup I_2,\\
		g^{-1}(I_2)&=I_7,\\
		f^{-1}(I_3)&= I_2\cup \{x_3\}\cup I_3\cup \{x_4\}\cup I_4,\\
		g^{-1}(I_4)&=I_1\cup\{x_2\}\cup I_2\cup \{x_3\}\cup I_3\cup \{x_4\}\cup I_4\cup \{x_5\}\cup I_5,\\
		f(I_5)&= I_4\cup \{x_5\}\cup I_5\cup \{x_6\}\cup I_6,\\
		g(I_6)&=I_5\cup\{x_6\}\cup I_6\cup \{x_7\}\cup I_7\cup \{x_8\}\cup I_8\cup \{x_1\}\cup I_1,\\
		f^{-1}(I_7)&= I_6\cup \{x_7\}\cup I_7\cup \{x_8\}\cup I_8,\\
		g(I_8)&=I_3.
	\end{align*}
	Markovian sequences for the intervals $I_2$ and $I_7$ are given  by $g^{-1}, f^{-1}$ and $g,f^{-1}$, respectively.
\end{ex}
\end{minipage}\hfill
\begin{minipage}[t]{.45\textwidth}
\centering\raisebox{\dimexpr \topskip-\height}{%
	\includegraphics{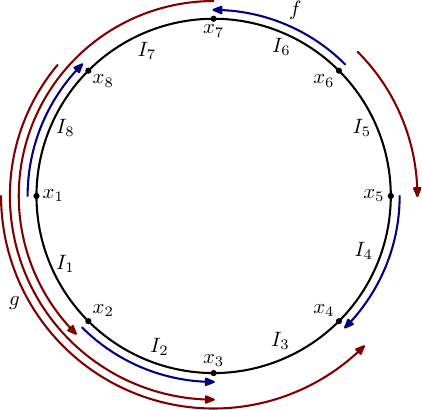}}
\captionof{figure}{A ping-pong partition for an action of $F_2$ with non-trivial Markovian sequence (Example \ref{ex:MMRT}).}
\label{fig:MMRT}
\end{minipage}

\begin{lem}\label{l_gap_neutral}
Let $(G,\alpha,T)$ be a marked, finitely generated, virtually free subgroup of $\Homeo_+(\Sc)$ with proper ping-pong partition $\Theta$. Assume that $I\in\cI$ is neutral. Then $I\cap\Lambda=\varnothing$.
\end{lem}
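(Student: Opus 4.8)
The plan is to show that if $I\in\cI$ is neutral, then its $G$-orbit never produces an $\cI$-Markovian image, and use this together with \cite[Proposition 6.5]{MarkovPartitions1} — which asserts that $\Lambda$ is contained in the closure of the intervals of $\cI$ and, more precisely, is described by the Markovian dynamics on $\cI$ — to conclude that $I$ cannot meet $\Lambda$. First I would observe that neutrality is preserved along the ping-pong dynamics: if $I$ is neutral and $s\in\cG$ is such that $s(I)\in\cI$ (which, by (PPP 3), is the only alternative to $s(I)$ being $\cI$-Markovian), then $s(I)$ is again neutral. Indeed, any Markovian sequence $s_1,\dots,s_k$ for $s(I)$ would, when prepended with $s$, yield a Markovian sequence $s, s_1,\dots,s_k$ for $I$ (here one checks the intermediate images $s_l\cdots s_1 s(I)$ are exactly the intermediate images required in the definition, and the last image is $\cI$-Markovian), contradicting that $I$ is neutral. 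Hence the orbit of $I$ under the successive application of generators stays inside $\cI$, lands only on neutral intervals, and in particular every word $w=s_k\cdots s_1$ in $\cG$ sends $I$ inside a single interval of $\cI$ — no image $w(I)$ is ever $\cI$-Markovian.

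The next step is to transfer this combinatorial statement into a statement about $\Lambda$. By \cite[Proposition 6.5]{MarkovPartitions1}, the minimal set $\Lambda$ is contained in $\overline{\bigcup_{J\in\cI}J}$; but more is true, and what I would extract from that proposition (or re-prove directly from the ping-pong structure) is that a point $\xi$ lies in $\Lambda$ only if it is approximated by the Markovian dynamics, i.e.\ only if points of $\Lambda$ arise as nested intersections of images of \emph{expanded} intervals. Since $\Lambda$ is the unique minimal set and $\Lambda\subset\overline{\bigcup\cI}$, if $I\cap\Lambda\neq\varnothing$ then by minimality there is some $g\in G$ with $g(I\cap\Lambda)$ meeting — in fact by minimality we can push $I\cap\Lambda$ to be dense in $\Lambda$, so there must be a generator word whose action on $I$ forces expansion, i.e.\ $w(I)$ must be $\cI$-Markovian for some word $w$ in $\cG$. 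This is precisely ruled out by the first step. To make this rigorous I would argue as follows: suppose $p\in I\cap\Lambda$. The set $G(p)$ is dense in $\Lambda$. Writing any $g\in G$ as a word $s_k\cdots s_1$ in $\cG$ and applying the generators one at a time to $I$, the first step shows $g(I)\subset I'$ for a single $I'\in\cI$; hence $g(I\cap\Lambda)\subset \overline{I'}$ and $\overline{I'}\cap\Lambda$ is a proper closed subset of $\Lambda$ (it cannot be all of $\Lambda$, since $\Lambda$ meets at least two intervals of $\cI$ — there are several atoms in a ping-pong partition). So $G(p)\subset\bigcup_{I'\in\cI}(\overline{I'}\cap\Lambda)$, each piece a proper closed subset, contradicting density of $G(p)$ in $\Lambda$ unless $I\cap\Lambda=\varnothing$.

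The main obstacle I anticipate is the precise bookkeeping in the transfer step: one must be careful that "$g(I)$ is contained in a single $I'\in\cI$ for every $g$" genuinely follows from applying (PPP 3) one generator at a time, since a priori an intermediate image $s_l\cdots s_1(I)$ could fail to be an element of $\cI$ (it could be a larger union) — but the inductive claim is exactly that neutrality propagates and keeps us in $\cI$, so the definition of Markovian sequence is tailored to exactly this situation, and the induction closes. A secondary subtlety is to confirm that $\Lambda$ meets at least two distinct intervals of $\cI$ (equivalently that no single $\overline{I'}$ contains all of $\Lambda$); this holds because $\Lambda$ is $G$-invariant and infinite while the atoms of a ping-pong partition are genuinely permuted/expanded by $\cG$, so invariance of a single $\overline{I'}\cap\Lambda=\Lambda$ would be incompatible with the expansion relations in (PPP 3). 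Once these two points are nailed down, the conclusion $I\cap\Lambda=\varnothing$ is immediate.
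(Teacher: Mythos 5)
Your first step — that neutrality propagates under the orbit of $I$, so every word $g=s_k\cdots s_1\in G$ sends $I$ into a single interval of $\cI$, and in fact into a neutral one — is correct and is exactly how the paper's proof begins (where the finite family of neutral intervals reachable from $I$ is called $\cI'$).

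The gap is in the density argument at the end. You conclude that $G(p)\subset\bigcup_{I'\in\cI}(\overline{I'}\cap\Lambda)$ with each $\overline{I'}\cap\Lambda$ a proper closed subset of $\Lambda$, and say this "contradicts density of $G(p)$." But a finite union of proper closed subsets of $\Lambda$ can, and here does, cover all of $\Lambda$: by \cite[Proposition 6.5]{MarkovPartitions1}, $\Lambda$ is contained in $\bigcup_{I'\in\cI}\overline{I'}$ up to finitely many points. So the set you exhibit as a trap is actually (essentially) all of $\Lambda$, and there is no contradiction. Even if you restrict to $\cI'$ (the neutral intervals reachable from $I$), you would then need to show $\Lambda\not\subset\bigcup_{I'\in\cI'}\overline{I'}$, i.e.\ that $\Lambda$ meets an interval outside $\cI'$ — which is close to what you are trying to prove, so the argument threatens to be circular.

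What is missing is the refinement idea: the paper iterates the refinement $\Theta_k$ of the ping-pong partition and tracks the growing set of endpoints $\Delta_k=\cG^k(\Delta_0)$. The crucial facts are (a) intervals of $\cI'$, being neutral, are never subdivided, so they remain connected components of atoms of every $\Theta_k$ and in particular $I\cap\Delta_k=\varnothing$ for all $k$; and (b) $\Delta_\infty=\bigcup_k\Delta_k$ accumulates on $\Lambda$ \cite[Proof of Theorem B]{MarkovPartitions1}. Then since $I$ is open and disjoint from $\Delta_\infty$, it is disjoint from $\Lambda$. This uses minimality indirectly through the accumulation of $\Delta_\infty$ on $\Lambda$, rather than through density of a single orbit $G(p)$, and it is the refinement structure — absent from your argument — that makes the conclusion go through. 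To repair your proof you would essentially need to import this: the point is not that $G(p)$ lands in finitely many closed arcs, but that those arcs never shrink while the endpoint set of the refined partitions fills up $\Lambda$.
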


\begin{proof}
Let $I$ be a neutral interval and let $\cI'\subset \cI$ be the subcollection of intervals $I'\in\cI$ such that there exists a finite sequence $s_1,\ldots,s_k\in\cG$ such that for every $l\in\{1,\ldots,k\}$ we have $I_l=s_l s_{l-1}\cdots s_1(I)\in\cI$ and $I_k=I'$. It is clear that $\cI'$ is a finite family of neutral intervals of $\cI$. As before, we denote by $\Delta\subset \T$ the collection of endpoints of intervals of $\mathcal I$.

For $k\ge 1$, let $\Theta_k$ be the $k$-th refinement of $\Theta$ (that is, we iterate $k$ times the refinement described in \cite[\S 6.3]{MarkovPartitions1}). We can prove by induction that intervals of $\cI'$ are neutral connected components of atoms of $\Theta_k$. Set $\Delta_0=\Delta$ and inductively $\Delta_k=\cG(\Delta_{k-1})$; then  $\Delta_k$ is the collection of endpoints of intervals of $\Theta_k$ (see \cite[\S 6.5]{MarkovPartitions1}). We have $I\cap\Delta_k=\varnothing$ for all $k\ge 0$. Since $I$ is open and $\Delta_\infty=\bigcup_{k\in\N}\Delta_k$ accumulates on $\Lambda$ (see \cite[Proof of Theorem B]{MarkovPartitions1}), we deduce that $\Lambda\cap I=\varnothing$.
\end{proof}

\subsubsection{Strictly Markovian partitions} In order to prove Theorem \ref{t.minimal_actions}, we need the following definition.

\begin{dfn}
Let $(G,\alpha,T)$ be a marked, finitely generated, virtually free subgroup of $\Homeo_+(\Sc)$. We say that a ping-pong partition $\Theta$ for $(G,\alpha,T)$ is \emph{strictly Markovian} if it is proper, its gaps are degenerated, and it has no neutral interval.
\end{dfn}

The next two lemmas show how to reduce to the case of group actions with strictly Markovian ping-pong partitions.

\begin{lem}\label{lem_free_subgrp}
Let $(G,\alpha,T)$ be a marked, finitely generated, virtually free subgroup of $\Homeo_+(\Sc)$ with proper ping-pong partition $\Theta$ and minimal invariant set $\Lambda$. Then for every atom $O\in\Theta$, we have $O\cap\Lambda\neq\varnothing$.
\end{lem}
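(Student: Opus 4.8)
The plan is to argue by contradiction: we assume that some atom $O_0\in\Theta$ satisfies $\Lambda\cap O_0=\varnothing$, and we deduce that $\Lambda$ would be finite, which is impossible. Two facts will be used repeatedly. First, $\Lambda$ is the unique minimal invariant set (this is part of the ping-pong picture from \cite{MarkovPartitions1}): in the cases of interest the action is either minimal or has a minimal invariant Cantor set, so every nonempty closed $G$-invariant subset of $\Sc$ contains $\Lambda$, and $\Lambda$ is infinite (when $\Theta=\varnothing$ there is nothing to prove). Second, by \cite[Proposition 6.5]{MarkovPartitions1} we have $\Lambda\subseteq\overline{\bigcup_{I\in\cI}I}$; since the finite set $\Delta$ of endpoints of the intervals of $\cI$ is disjoint from the open set $\bigcup_{O\in\Theta}O=\bigcup_{I\in\cI}I$, we get $\Lambda\setminus\Delta\subseteq\bigcup_{O\in\Theta}O$, and this set is nonempty because $\Lambda$ is infinite. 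Hence $\Lambda$ meets at least one atom.

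The core step is to show that the assumption $\Lambda\cap O_0=\varnothing$ forces $\Lambda$ into the closure of a single atom. The mechanism is that, for suitable elements of $\cG$, all atoms but one are mapped inside $O_0$, hence are $\Lambda$-free: indeed if $g(O)\subseteq O_0$ then $\Lambda\cap O=g^{-1}(\Lambda\cap O_0)=\varnothing$. When $O_0=V_s$ for some $s\in S$, axiom (IF~\ref{pp2}) gives directly $s(O)\subseteq V_s=O_0$ for every atom $O\neq V_{\overline s}$, so every atom other than $V_{\overline s}$ is $\Lambda$-free, and therefore $\Lambda\subseteq\overline{V_{\overline s}}$. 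When $O_0=U_v^e$, one plays the same game using the axioms describing the maps whose image lands in $X_v^e=U_v^e$: if $e\in E_T$ one uses (IF~\ref{pp7}) and (IF~\ref{pp8}), which send $U_w=X_w$ and $V_s=Z_s$ (for $w\in C(e,T)$, resp.\ $o(s)\in C(e,T)$) inside $U_v^e$; if $e=s'\in S$ with $o(s')=v$ one uses the strengthening of (IF~\ref{pp4}) recorded in (PPP~\ref{ppp1}), namely $\bigl(G_v\setminus\alpha_{s'}(A_{s'})\bigr)(V_{s'})\subseteq U_v^{s'}$ together with $\alpha_{s'}(A_{s'})(U_v^{s'})=U_v^{s'}$. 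Iterating these relations along the spanning tree $T$, one reaches every atom except one distinguished atom $O_1$. In all cases $\Lambda$ misses every atom but $O_1$, so $\Lambda\setminus\Delta\subseteq O_1$, i.e.\ $\Lambda\subseteq\overline{O_1}$, a finite union of open intervals with endpoints in $\Delta$; note also $\Lambda\cap O_1\supseteq\Lambda\setminus\Delta\neq\varnothing$.

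To finish, we produce an element $g\in G$ with $g(O_1)\cap O_1\subseteq\Delta$: then $\Lambda=g(\Lambda)\subseteq\overline{g(O_1)}$ and $\Lambda\subseteq\overline{O_1}$ force $\Lambda\subseteq\overline{g(O_1)}\cap\overline{O_1}\subseteq\partial O_1\cup\partial g(O_1)\subseteq\Delta$, contradicting that $\Lambda$ is infinite. If $O_1=V_s$ with $G_{o(s)}\neq\alpha_s(A_s)$, such a $g$ is given by (IF~\ref{pp4}), which maps $Z_s=O_1$ into $X_{o(s)}$, disjoint from $O_1$ up to endpoints; an analogous remark using (IF~\ref{pp6})--(IF~\ref{pp7}) handles $O_1=U_v^e$ when the relevant vertex group strictly contains the edge-group image. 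The delicate point, and the step I expect to be the main obstacle, is the degenerate situation where the relevant vertex and edge groups coincide (in particular for free groups, where all vertex and edge groups are trivial), so that the axioms above produce no nontrivial motion of $O_1$. There one must instead invoke the genuine properness axioms (IF~\ref{pp10}) and (IF~\ref{pp11}) — which provide a point outside $Z_s\cup Z_{\overline s}$ mapped into $Z_s$ by $s$ and into $Z_{\overline s}$ by $\overline s$, and an atom whose relevant images miss a point — to conclude that the $G$-orbit of $\overline{O_1}$ cannot avoid a whole subinterval of some atom; alternatively, one first passes to a strictly Markovian refinement (no neutral intervals, cf.\ Lemma~\ref{l_gap_neutral}) and runs the argument there. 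Organizing this case distinction so that a single argument covers all atom types is where the bulk of the work lies.
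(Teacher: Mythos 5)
Your plan is the contrapositive of the paper's two-line argument: the paper observes that $\Lambda\subset\bigcup_{O\in\Theta}\overline O$ forces some atom to meet $\Lambda$, then propagates $\Lambda$-intersection forward along (IF~\ref{pp2}), (IF~\ref{pp7}), (IF~\ref{pp8}); you propagate $\Lambda$-freeness backward along the same relations. The gap is that you stop that propagation one step too early, record that $\Lambda$ is trapped in $\overline{O_1}$, and then reach for a separate finish that you rightly flag as incomplete when the relevant vertex group is trivial.

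The missing step is that the same ping-pong relations also reach $O_1$. Take your case $O_0=V_s$: you correctly get that every atom other than $V_{\overline s}$ is $\Lambda$-free, but instead of stopping, pick any $s'\in S\setminus\{s,\overline s\}$. Applying (IF~\ref{pp2}) to the generator $s'$ gives $s'(V_{\overline s})\subset V_{s'}$ (the exclusion $V_{\overline s}\neq V_{\overline{s'}}$ holds exactly because $s'\neq s$), and $V_{s'}$ is already known to be $\Lambda$-free since $s'\neq\overline s$; hence $V_{\overline s}$ is $\Lambda$-free as well. Now every atom misses $\Lambda$, so $\Lambda\subset\bigcup_O\overline O\setminus\bigcup_O O\subset\Delta$ is finite, a contradiction. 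Note that $g=s'$ is precisely the element with $g(O_1)\cap O_1=\varnothing$ your finish was trying to produce: you went to (IF~\ref{pp4}), which needs a nontrivial vertex group, when (IF~\ref{pp2}) with a second generator already does the job. The analogous continuation in the $U_v^e$ case runs through (IF~\ref{pp7}) and (IF~\ref{pp8}). So the ``degenerate free case'' is not the obstacle you anticipated; no appeal to (IF~\ref{pp10}), (IF~\ref{pp11}), or to strictly Markovian refinements is needed, and one more application of the axioms you already invoked closes the argument exactly as the paper does.
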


\begin{proof}
%
As $\Lambda\subset \bigcup_{O\in \Theta}\overline O$, there exists an atom $O\in\Theta$ intersecting $\Lambda$. Now, by the $G$-invariance of $\Lambda$, we can propagate this to every other atom of $\Theta$ using the properties (IF \ref{pp2},\ref{pp7},\ref{pp8}) of an interactive family.
\end{proof}

\begin{lem}\label{lem_collapse}
Every marked, finitely generated, virtually free subgroup of $\Homeo_+(\Sc)$ with a proper ping-pong partition is semi-conjugate to a marked subgroup of $\Homeo_+(\Sc)$ with a strictly Markovian ping-pong partition.
\end{lem}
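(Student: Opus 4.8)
The plan is to prove Lemma~\ref{lem_collapse} by a standard ``blow-down'' (collapsing) construction, performed inductively on the hierarchy of intervals that fail to be strictly Markovian. We start from a marked, finitely generated, virtually free $G\subset\Homeo_+(\Sc)$ with a proper ping-pong partition $\Theta$, and we first dispose of the gaps. Since $G$ has a minimal invariant set $\Lambda$ and by \cite[Proposition 6.5]{MarkovPartitions1} $\Lambda\subset\bigcup_{O\in\Theta}\overline O$, the union $\Lambda\cup\overline{\bigcup_{I\in\cI}I}$ has at most countably many complementary intervals, and these contain the gaps of $\Theta$ (which, by Lemma~\ref{lem_free_subgrp}, may be genuine gaps of $\Lambda$ or ``spurious'' gaps inside some atom). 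We collapse each such complementary interval and its entire $G$-orbit to a point via the classical semi-conjugacy construction (as in the Poincaré procedure for minimalizing circle actions): the quotient map $h:\Sc\to\Sc$ is monotone, degree one, commutes with the dynamics, and the resulting action $\bar G$ on $\Sc$ has a ping-pong partition $\bar\Theta$ whose atoms are the $h$-images of the atoms of $\Theta$. One checks that the properties (IF~\ref{pp1})--(IF~\ref{pp11}) and (PPP~\ref{ppp1})--(PPP~\ref{ppp3}) are preserved under this collapse: the set-theoretic inclusions and the Markovian relations pass to the quotient because $h$ is monotone and equivariant, and properness is preserved because faithfulness of the edge-group actions on the relevant atoms survives (the collapsed arcs are disjoint from the limit set where the faithfulness is detected, cf.\ Lemma~\ref{l_gap_neutral}). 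After this first step the partition has degenerate gaps.

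Next I would remove the neutral intervals. By Lemma~\ref{l_gap_neutral}, a neutral interval $I$ satisfies $I\cap\Lambda=\varnothing$, and moreover the finite family $\cI'$ of intervals reachable from $I$ by iterating generators (as in the proof of that lemma) consists entirely of neutral intervals disjoint from $\Lambda$; the $G$-orbit of $I$ is a countable union of such intervals, all disjoint from $\Lambda$ and, by an argument analogous to the proof of Lemma~\ref{l_gap_neutral}, with pairwise disjoint closures up to the orbit structure. I would again collapse the closure of each neutral interval (and its whole orbit) to a point. The key point to verify is that collapsing a neutral interval does not destroy the interactive family structure: since a neutral interval never gets expanded (it only maps to other intervals of $\cI$, never $\cI$-Markovianly), the collapse turns the maps $s\restriction_I$ into maps between points, which is compatible with (PPP~\ref{ppp3}) (the first alternative, $g(I)\subset O'$, becomes $g(\mathrm{pt})=\mathrm{pt}'$). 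After collapsing, the remaining atoms still carry the required inclusion and Markovian relations, and one must check that at least one atom survives with nonempty interior so that the partition is nondegenerate — this follows because the intervals carrying a Markovian sequence are not neutral and, by Lemma~\ref{lem_free_subgrp}, intersect $\Lambda$, hence are not collapsed.

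The main obstacle I anticipate is bookkeeping the properness conditions (IF~\ref{pp9})--(IF~\ref{pp11}) through the two collapses, and in particular making sure the ``misses a point'' conditions (IF~\ref{pp10}), (IF~\ref{pp11}) are not violated: collapsing arcs could conceivably make the union of images in some $X_w$ become all of $X_w$. I would handle this by noting that the points that are ``missed'' can be chosen in $\Lambda$ (or in the closure of the expanding intervals), which is never collapsed, so the missed point persists in the quotient; if necessary one argues that after collapse the partition is still proper because properness is a semi-conjugacy-stable notion once the surviving atoms all meet $\Lambda$ (using \cite[Theorem 5.4]{MarkovPartitions1} type arguments on faithfulness). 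A secondary subtlety is that the two collapses should be carried out so that the composition is still a single monotone degree-one map; since a composition of two such collapses is again of this form, this is automatic, and the resulting action has a partition which is proper, has degenerate gaps, and has no neutral intervals — i.e.\ is strictly Markovian, as required. Finally, passing from $\Homeo_+(\Sc)$ to itself is all that is claimed here (the real-analytic realization comes later via Theorem~\ref{t:realization theorem}), so no regularity issues arise at this stage.
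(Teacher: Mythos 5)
Your approach is genuinely different from the paper's, though both are correct in outline. The paper proves this lemma in one step: it takes the standard minimalization $G'$ of $G$ (the Poincar\'e procedure collapsing \emph{all} gaps of $\Lambda$ at once), checks via Lemma~\ref{lem_free_subgrp} that each $h(O)$ has nonempty interior (because $O\cap\Lambda$ is infinite), so that $\Theta'=\{\mathsf{Int}(h(O))\}$ is a ping-pong partition for $G'$, and then invokes the simple observation that for a \emph{minimal} action the partition is automatically strictly Markovian --- degenerate gaps because $\T=\Lambda\subset\bigcup\overline I$ forces the gaps into the finite set of endpoints, and no neutral intervals because a neutral interval is disjoint from $\Lambda=\T$ by Lemma~\ref{l_gap_neutral}, hence empty. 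You instead perform two surgical collapses: first the gaps of $\Theta$ (and their orbits), then the neutral intervals (and their orbits). That reaches the same place, but it creates several verification burdens the paper sidesteps: one must check at each stage that the collapsed family of arcs has pairwise disjoint closures and is dynamically invariant so that the quotient map is monotone and degree one; one must re-verify all of (IF~\ref{pp1})--(IF~\ref{pp11}) and (PPP~\ref{ppp1})--(PPP~\ref{ppp3}) after each collapse; and one must show the ``misses a point'' conditions survive. You acknowledge these obstacles and sketch how to address them (the missed points can be found in $\Lambda$, which is never collapsed), so the plan is not wrong, but it is substantially heavier bookkeeping than the paper's, which gets all of this for free from the fact that the target action is minimal and the minimal case is trivial. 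The one thing your argument highlights that the paper treats implicitly is the preservation of properness under semi-conjugacy --- the paper simply asserts ``$\Theta'$ defines a ping-pong partition,'' which implicitly includes properness; making that step explicit would be a genuine improvement to either version of the proof.
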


\begin{proof}
If the action of the subgroup is minimal then the ping-pong partition has no neutral intervals (see Lemma \ref{l_gap_neutral}) and its gaps are degenerated. We assume next that the minimal invariant set $\Lambda$ is a Cantor set. We write $(G,\alpha,T)$ for the marked subgroup, and $\Theta$ for the ping-pong partition.
Take a minimalization of $G$, that is, a subgroup $G'\subset\Homeo_+(\Sc)$, isomorphic to $G$, whose action is minimal and semi-conjugate to that of $G$. Write $h:\T\to\T$ for the surjective map realizing the semi-conjugacy between $G$ and $G'$. Note that $h$ is finite-to-1 on $\Lambda$.
By Lemma \ref{lem_free_subgrp}, for every atom $O\in\Theta$, the intersection $O\cap\Lambda$ is infinite (because $\Lambda$ has no isolated points) so the image $h(O)$ has non-empty interior. 
We define $\Theta'=\{\mathsf{Int}(h(O))\}_{O\in\Theta}$. As $h$ defines a semi-conjugacy, $\Theta'$ defines a ping-pong partition for the action of  $G'$ (with respect to the marking induced by the isomorphism with $G$). Since $G'$ is minimal, we are now reduced to the first case considered in this proof, so the ping-pong partition $\Theta'$ has no non-degenerated gaps and no neutral intervals.
\end{proof}

\subsubsection{Markovian expansion and first returns} We assume that $(G,\alpha,T)$ is a marked, finitely generated, virtually free subgroup of $\Homeo_+(\Sc)$ with a strictly Markovian ping-pong partition $\Theta$.
As the ping-pong partition $\Theta$ is strictly Markovian, for any point $x\in\Sc$ there exists a unique interval $I_x\in\cI$ such that $x$ belongs to $I_x$ union its leftmost point.
Fix $x_0\in \T$. We define by induction a sequence of generators $\{s_n\}_{n\in\N}\subset \cG$  and intervals $\{I_n\}_{n\in\N}\subset \cI$, as follows.
Write $I_0=I_{x_0}$ and, for any $n\ge 1$, write $x_n=s_{n-1}(x_{n-1})$ and $I_{n}=I_{x_n}$; for any $n\ge 0$, we take as $s_n\in \cG$ the first generator appearing in a shortest Markovian sequence for $I_n$.
The sequence of partial compositions $\{s_n\cdots  s_0\}_{n\in\N}$ is called a \emph{Markovian expansion} of $x_0\in\Sc$.

Take now $I\in\cI$ and let $x_0\in\Delta_0$ be its leftmost point. Consider  a Markovian expansion  $\{s_n\cdots  s_0\}_{n\in\N}$ of $x_0$ and assume there exists a least integer $n\geq 1$ such that $x_n=x_0$; then the element $h=(s_{n-1}\cdots  s_0)^{-1}$ satisfies $I\supset h(I)$ and $h(x_0)=x_0$. In analogy with Definition \ref{d.wandering}, we say that the element $h$ is a \emph{first return to $I$}.

%

\begin{lem}\label{l_extrem_lambda}
Let $(G,\alpha,T)$ be a marked, finitely generated, virtually free subgroup of $\Homeo_+(\Sc)$ with a strictly Markovian ping-pong partition $\Theta$. Assume that for every $I\in\cI$ and every first return $h$ to $I$, the map $h$ has no fixed point on $I$. Then $\Delta_0\subset \Lambda$.
\end{lem}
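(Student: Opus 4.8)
The plan is to show that every point $x_0 \in \Delta_0$ lies in the minimal set $\Lambda$ by exhibiting $x_0$ as an accumulation point of its own $G$-orbit, using the first-return hypothesis to produce an infinite forward Markovian expansion whose itinerary forces recurrence. First I would recall from the companion work \cite{MarkovPartitions1} that $\Delta_\infty = \bigcup_k \Delta_k$ accumulates on $\Lambda$ and that, since the partition is strictly Markovian (no neutral intervals, degenerate gaps), every interval $I \in \cI$ admits a Markovian sequence; hence the Markovian expansion $\{s_n \cdots s_0\}_{n\in\N}$ of any point is defined for all $n$, and at each step the image $s_n \cdots s_0(I_0)$ is either a single interval of $\cI$ or $\cI$-Markovian, so the length (in the natural refinement sense of \cite[\S 6.3]{MarkovPartitions1}) of the interval containing $x_0$ in the pulled-back partition keeps shrinking. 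The key dynamical input is that along the expansion of $x_0$, whenever the orbit point $x_n$ returns to $x_0 = $ the leftmost point of $I_0$, the composition $h = (s_{n-1}\cdots s_0)^{-1}$ is a first return to $I_0$ fixing $x_0$; the hypothesis says $h$ has no fixed point on $I_0$ other than possibly $x_0$ itself, and more precisely $I_0 \subsetneq h(I_0)$ strictly, so iterating $h$ expands a neighbourhood of $x_0$ and $h^{-n}$ contracts $\overline{I_0}$ toward $x_0$; this shows $x_0$ is accumulated by $\Delta_\infty \cap I_0$ (endpoints of refined intervals), which accumulates on $\Lambda$, whence $x_0 \in \Lambda$ by closedness.

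The main case distinction I would set up is: either the Markovian expansion of $x_0$ eventually returns to $x_0$ (so a first return $h$ exists), or it never does. In the first case, as just sketched, the no-fixed-point-in-the-interior hypothesis upgrades $I_0 \subseteq h(I_0)$ to the statement that $\{h^{-n}(\overline{I_0})\}_n$ is a strictly nested sequence of closed intervals shrinking to $\{x_0\}$ (a standard North–South type argument on the single interval $I_0$, using that $h$ fixes $x_0$, expands $I_0$, and has no other fixed point there), and since each $h^{-n}(I_0)$ is a connected component of a suitable refinement $\Theta_{k(n)}$, its endpoints lie in $\Delta_{k(n)} \subset \Delta_\infty$; as these endpoints converge to $x_0$ and $\Delta_\infty$ accumulates on $\Lambda$ (hence its closure contains $\Lambda$ but we need the reverse — rather, use that $\Delta_\infty \subset \overline{\Lambda \cup \Delta_0}$ and in fact $\overline{\Delta_\infty} \supseteq \Lambda$; the cleaner route is that $x_0$ is a limit of points $h^{-n}(y)$ for $y \in \Lambda \cap I_0$, which is nonempty and infinite by Lemma \ref{lem_free_subgrp}, and $\Lambda$ is $G$-invariant and closed, giving $x_0 \in \Lambda$). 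In the second case, where the expansion never returns to $x_0$, I would argue that the intervals $I_n = I_{x_n}$ cannot all be distinct indefinitely since $\cI$ is finite, so the itinerary is eventually periodic as a sequence in $\cI$; combining the finiteness of $\cI$ with the Markovian property yields a first return to *some* interval $I' \in \cI'$ in the forward orbit, and one transports the conclusion back to $x_0$ along the (finitely many) generators $s_0, \ldots, s_{m-1}$ connecting $x_0$ to that recurrent interval, using $G$-invariance of $\Lambda$.

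The hard part will be handling the second case cleanly: ensuring that the forward Markovian expansion of the leftmost endpoint $x_0$ of $I_0$ genuinely produces a first return (to $I_0$ or to a conjugate interval) rather than wandering forever among distinct configurations. The finiteness of $\cI$ only bounds the alphabet, not the combinatorial state, so one must invoke the expansion/refinement machinery of \cite[\S 6.3, \S 6.5]{MarkupPartitions1}: each application of a generator in a Markovian sequence strictly refines, and the leftmost endpoints are preserved under the dynamics (this is exactly why we chose $x_0 \in \Delta_0$ to be leftmost — the map $h$ is forced to fix it), so a pigeonhole on the interval-plus-endpoint data forces $x_n = x_0$ for some $n \geq 1$, reducing the second case to the first. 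Once this recurrence is in hand the rest is the elementary interval dynamics described above, and the conclusion $\Delta_0 \subset \Lambda$ follows by applying it to each of the finitely many endpoints.
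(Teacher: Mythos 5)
Your core idea matches the paper's: pigeonhole on the finite set $\Delta_0$ (which absorbs every $x_n$) to produce a first return, use the no-fixed-point hypothesis to make that first return contract its interval to the left endpoint, and deduce that this endpoint is a limit of orbit points and hence lies in $\Lambda$. Two points need tightening.

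First, the Case~1/Case~2 dichotomy is unnecessary and introduces a slip: the pigeonhole gives indices $n<m$ with $x_n = x_m$, but nothing forces the repeated value to equal $x_0$, so your claim that ``a pigeonhole on the interval-plus-endpoint data forces $x_n = x_0$'' is wrong in general. The paper takes the least $n<m$ with $x_n=x_m$, obtains a first return to $I_n$ (not necessarily $I_0$), concludes $x_n\in\Lambda$, and then recovers $x_0=(s_{n-1}\cdots s_0)^{-1}(x_n)\in\Lambda$ by $G$-invariance. You do gesture at this ``transport back'' step for Case~2, but once you do, Case~2 is the general case and the split adds nothing.

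Second, your ``cleaner route'' for the final step has a genuine gap. You invoke Lemma~\ref{lem_free_subgrp} to assert $\Lambda\cap I_0\neq\varnothing$, but that lemma concerns \emph{atoms} $O\in\Theta$, which are unions of possibly several intervals of $\cI$; it does not say that each individual interval of $\cI$ meets $\Lambda$ (indeed a neutral interval would not). The paper instead iterates the first return on the endpoint $y$ of $I$ itself: $h^n(y)\in\Delta_\infty=G(\Delta_0)$, the sequence converges to $x$, and the fact quoted from the proof of Theorem~B of \cite{MarkovPartitions1} is that the accumulation points of $\Delta_\infty$ lie in $\Lambda$ — this is the direction you correctly worried about, and it is exactly what that reference supplies, not merely $\overline{\Delta_\infty}\supset\Lambda$. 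Stick with that route rather than detouring through $\Lambda\cap I_0$.
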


\begin{proof}
Let $x_0\in\Delta_0$. We take a Markovian expansion $\{s_n\cdots  s_0\}_{n\in\N}$ of $x_0$. Since $\Delta_0$ is finite, there exist two least integers $n<m$ such that $x_n=x_m$. We write $x=x_n=x_m$ for this point and $I=I_n=(x,y)$ for the corresponding interval in $\cI$. Consider the element $h=(s_{m-1}\cdots s_{n})^{-1}\in G$.
By minimality of the choice of $n$ and $m$, the map $h$ is a first return to $I$. In particular $h$ has no fixed point on $h(I)\cap I$ by hypothesis.
As $h(I)\subset I$, we deduce that $h^n(y)\to x$ as $n\to \infty$. Note that for every $n\in \N$, we have $h^n(y)\in \Delta_\infty=G(\Delta_0)$. As $\Delta_\infty$ accumulates on $\Lambda$ (see \cite[Proof of Theorem B]{MarkovPartitions1}), we deduce that $x\in \Lambda$, and thus, by $G$-invariance, also $x_0=(s_{n-1}\cdots s_0)^{-1}(x)$ is in $\Lambda$.
\end{proof}

\subsubsection{Minimal and real-analytic realization}

We keep the assumption that $(G,\alpha,T)$ admits a strictly Markovian ping-pong partition $\Theta$.
For $l\geq 1$, we let $\cI_l$ be the collection of intervals of $\cI$ admitting a shortest Markovian sequence of length~$l$.
\begin{rem}
By definition, we have that $I\in\cI_1$ if and only if there exists $s\in\cG$ such that $s(I)$ is $\cI$-Markovian. When $l\geq 2$, if $I\in\cI_l$ and $s_1,\ldots,s_l$ is a shortest Markovian sequence, then $s_1(I)\in\cI_{l-1}$.
Since $\cI$ is finite there exists a largest $k$ such that $\cI_k$ is non-empty, and as we are assuming that $\Theta$ is strictly Markovian, we have $\cI=\bigcup_{l=1}^k\cI_l$.
\end{rem}

For practical purposes, it will be better to work with a particular class of strictly Markovian ping-pong partitions:

\begin{dfn}\label{d.geometric}
Let $\Theta$ be a ping-pong partition for a marked, finitely generated, virtually free subgroup $(G,\alpha,T)$ of $\Homeo_+(\Sc)$. 
We say that $\Theta$ is \emph{geometric} if it is strictly Markovian and for every interval $I\in\cI$ and shortest Markovian sequence $s_1,\ldots,s_l$ for $I$, we have $|s_1(I)|>|I|$.
\end{dfn}

Note the definition above is not restrictive:

\begin{lem}\label{lem_geometric}
Every marked, finitely generated, virtually free subgroup $(G,\alpha,T)$ of $\Homeo_+(\Sc)$ with a strictly Markovian ping-pong partition is conjugate to a subgroup $G'$ of $\Homeo_+(\T)$, for which the ping-pong partition induced by the conjugacy is geometric.

Moreover, when $G\subset \Diff_+^\omega(\T)$, we can take $G'\subset \Diff_+^\omega(\T)$.
\end{lem}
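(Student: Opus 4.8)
The plan is to produce the conjugating homeomorphism explicitly, by prescribing the lengths of the images of the atoms according to their ``Markovian level'', and then invoking the interpolation Lemma~\ref{l:interpolationCw} in the real-analytic case (or any homeomorphism in the purely topological case). Write $k$ for the largest integer with $\cI_k\neq\varnothing$; since $\Theta$ is strictly Markovian, $\cI=\bigcup_{l=1}^k\cI_l$. First I would record the two combinatorial facts that drive everything: (a) by the Remark preceding Definition~\ref{d.geometric}, if $I\in\cI_l$ with $l\ge 2$ and $s_1,\dots,s_l$ is \emph{any} shortest Markovian sequence for $I$, then $s_1(I)\in\cI_{l-1}$; (b) if $I\in\cI_1$ and $s_1$ begins a shortest Markovian sequence, then $s_1(I)$ is $\cI$-Markovian, so by Definition~\ref{d:gaps} it is a union of at least two intervals of $\cI$ together with the (degenerate) gaps in between, whence $|s_1(I)|$ equals the sum of the lengths of those $\ge 2$ atoms. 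Note that in both cases the relevant quantity depends only on the level, so a bound obtained this way holds for every shortest Markovian sequence simultaneously.

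Next I would choose the target lengths. Fix $\delta\in(0,1/k)$ and, for $I\in\cI_l$, set the weight $w(I):=1+(k-l)\delta$, so that $1=\max_{\cI_k}w$ while $\min_{\cI_1}w=1+(k-1)\delta<2$; put $W:=\sum_{I\in\cI}w(I)$ and $\ell(I):=w(I)/W$, so $\sum_{I\in\cI}\ell(I)=1$. The two facts above give precisely the geometric inequalities for these lengths: if $I\in\cI_l$ with $l\ge 2$ and $s_1$ begins a shortest Markovian sequence, then $|s_1(I)|=\ell(s_1(I))=\tfrac1W\bigl(1+(k-l+1)\delta\bigr)>\tfrac1W\bigl(1+(k-l)\delta\bigr)=\ell(I)$; and if $I\in\cI_1$, then $|s_1(I)|\ge 2/W>\tfrac1W(1+(k-1)\delta)=\ell(I)$, since $(k-1)\delta<(k-1)/k<1$ (the case $k=1$ being immediate as then all weights equal $1$). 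Thus any partition of $\T$ into intervals of these lengths, arranged in the cyclic order of $\cI$ with degenerate gaps between consecutive atoms, is geometric.

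Finally I would realize this length vector by a conjugacy. Let $x_1<\dots<x_N$ be the cyclically ordered endpoints of the atoms of $\Theta$ --- equivalently the degenerate gap points --- so that $I_j=(x_j,x_{j+1})\in\cI$, and choose $y_1<\dots<y_N$ on $\T$ with $y_{j+1}-y_j=\ell(I_j)$. Lemma~\ref{l:interpolationCw} provides a real-analytic diffeomorphism $\varphi$ of $\T$ with $\varphi(x_j)=y_j$ for all $j$ (in the merely topological setting one takes instead any orientation-preserving homeomorphism with this property, e.g.\ the piecewise linear one). Set $G':=\varphi G\varphi^{-1}$. Conjugation by $\varphi$ carries each inclusion $g(I)\subset O'$ to $(\varphi g\varphi^{-1})(\varphi(I))\subset\varphi(O')$, and each $\cI$-Markovian image to a $\varphi(\cI)$-Markovian image with the same list of intervals and gaps; hence $\{\varphi(O)\}_{O\in\Theta}$ is a ping-pong partition for $G'$ with identical combinatorics, in particular again strictly Markovian, with the same levels $\cI_l$ and the same initial generators $s_1$ of shortest Markovian sequences. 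Since $|\varphi(I_j)|=\ell(I_j)$ by construction, this ping-pong partition is geometric, and when $G\subset\Diff_+^\omega(\T)$ we have $G'\subset\Diff_+^\omega(\T)$ because $\varphi$ is real-analytic.

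The one point that needs care --- and the step I would single out as the crux --- is the \emph{simultaneous} solvability of all the geometric inequalities: the constraints coming from $\cI_{\ge 2}$ force the lengths to increase strictly as the level decreases, yet each $\cI_1$-interval must be shorter than a sum of only two atoms, which may themselves be top-level and hence among the shortest; so the ratios between consecutive levels cannot be too large. This is exactly what the smallness of $\delta$ (namely $\delta<1/k$) secures. Everything else --- the transfer of the ping-pong and strict-Markovianity properties under conjugation, and the existence of the interpolating (real-analytic) homeomorphism --- is routine given Lemma~\ref{l:interpolationCw}.
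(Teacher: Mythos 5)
Your proposal is correct and follows essentially the same route as the paper's proof: prescribe target lengths for the atoms (you do this with the explicit weight $w(I)=1+(k-l)\delta$, $\delta<1/k$, while the paper just records the two abstract inequalities those weights instantiate, namely $\max|I'|<2\min|I'|$ and $\min_{\cI'_{l-1}}|I'|>\max_{\cI'_l}|I'|$), then conjugate by an interpolating homeomorphism (real-analytic, via Lemma~\ref{l:interpolationCw}, in the $C^\omega$ case), and verify the geometric inequality by the same two-case computation. The only material difference is that you exhibit a concrete length vector rather than asserting one exists; both establish the same pair of constraints and yield the same conclusion.
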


\begin{proof}
Let $\Theta$ be the ping-pong partition for $(G,\alpha,T)$, with collection of intervals $\cI$.
We first make the choice of a collection of intervals $\cI'$, and then consider an appropriate conjugating map. For this, we take
a partition $\cI'$ of the circle into $\#\cI$ open intervals, and fix a bijection $\theta:\cI\to\cI'$ which preserves the circular order of the intervals. For $l\in\{1,\ldots, k\}$, we  set $\cI_l'=\theta(\cI_l)$. We choose the partition $\cI'$ so that the following conditions are satisfied:
\begin{gather}
	\label{ii:geometric} \max_{I'\in\cI'}|I'|<2\min_{I'\in\cI'}|I'|;\\
	\label{i:geometric}  \min_{I'\in \cI_{l-1}} |I'|>\max_{I'\in\cI'_l}|I'|\quad\text{for any }l\ge 2.
\end{gather}
We can now take a real-analytic diffeomorphism $h:\Sc\to\Sc$ such that for every $I\in\cI$, we have $h(I)=\theta(I)$. Then the conjugate subgroup $G'=hGh^{-1}$ satisfies the desired properties.
Indeed, if $I'\in \cI'_1$, then there exists a generator $s\in \cG'=h\cG h^{-1}$ such that $s(I')$ is $\cI'$-Markovian, and by condition \eqref{ii:geometric} we have 
\[|s(I')|\ge 2\min_{J'\in \cI'}|J'|>\max_{J'\in \cI'}|J'|\ge |I'|.\]
When $I'\in \cI'_l$, with $l\ge 2$, there exists a generator $s\in \cG'$ such that $s(I')\in \cI'_{l-1}$ and by condition \eqref{i:geometric} we have $|s(I')|>|I'|$.
\end{proof}

\begin{dfn}\label{d.inf_geometric}
Let $\Theta$ be a geometric ping-pong partition for a marked, finitely generated, virtually free subgroup $(G,\alpha,T)$ of $\Diff_+^1(\Sc)$. 
We say that $\Theta$ is \emph{infinitesimally geometric} if for every interval $I\in\cI$ and shortest Markovian sequence $s_1,\ldots,s_l$ for $I$, we have $s_1'(x)>1$ for every $x \in I$.
\end{dfn}

\begin{lem}\label{l.minimality}
Every marked, finitely generated, virtually free subgroup of $\Diff^1_+(\Sc)$ with an infinitesimally geometric ping-pong partition acts minimally.
\end{lem}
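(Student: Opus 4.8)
An infinitesimally geometric ping‑pong partition is in particular geometric, hence strictly Markovian: $\Theta$ is proper, its gaps are degenerate, and every $I\in\cI$ has a shortest Markovian sequence, so the whole first‑return formalism and Lemmas~\ref{l_gap_neutral}--\ref{l_extrem_lambda} are available. The plan is to verify the hypothesis of Lemma~\ref{l_extrem_lambda} — that every first return has no fixed point on its interval — to conclude $\Delta_0\subseteq\Lambda$, and then to rule out a nondegenerate gap of $\Lambda$ by a maximal‑length argument. Both the first step and the last one use the infinitesimal expansion carried by the partition.

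\textbf{First returns have no fixed points (the crux).} Let $h$ be a first return to $I=(x_0,y_0)\in\cI$, with $x_0$ its leftmost point, and write $g:=h^{-1}=s_{n-1}\cdots s_0$ along a Markovian expansion $\{s_j\cdots s_0\}$ of $x_0$ with $x_n=x_0$; set $g_j:=s_{j-1}\cdots s_0$ and $I_j:=I_{x_j}$. Since $n\ge 1$, the expansion must involve at least one Markovian step (otherwise the length of the shortest Markovian sequence of $I_j$ would strictly decrease along the $n$ steps, contradicting $I_n=I_0$), past which $g_j(I)\supsetneq I_j$; as $I_n=I_0$ this gives $g(I)\supsetneq I$ strictly, with $g(x_0)=x_0$ and $x_0$ the leftmost point of $g(I)$. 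Moreover each $g_j(x_0)=x_j$ is the leftmost point of $I_j$, and $s_j'>1$ on $I_j$ by infinitesimal geometry; hence for $x$ in a sufficiently small right‑neighbourhood of $x_0$ all iterates $g_j(x)$ remain in the open intervals $I_j$, so $g'(x)=\prod_j s_j'\bigl(g_j(x)\bigr)>1$ there, and $g(x)>x$ just to the right of $x_0$. Writing $y_\infty:=\lim_k h^k(y_0)$ (a monotone limit, since $h(\overline I)\subsetneq\overline I$), the conclusion ``$h$ has no fixed point on $I$'' is equivalent to $y_\infty=x_0$. If instead $y_\infty>x_0$, then $[x_0,y_\infty]$ is a nondegenerate $g$‑invariant interval, so $\int_{x_0}^{y_\infty}g'=y_\infty-x_0$ while $g'>1$ near $x_0$; to get a contradiction one must show the partial iterates $g_j$ expand infinitesimally along all of $[x_0,y_\infty]$, not merely near $x_0$. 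This is the delicate point: the orbit of an interior point of $I$ may leave the chain of expanding intervals $I_j$ after the first Markovian step, so one cannot conclude $g'>1$ on all of $[x_0,y_\infty]$ by a naive chain‑rule computation, and settling it with the combinatorics of the Markovian expansion together with the geometric estimate $|s_1(I)|>|I|$ is where I expect essentially all the work to go.

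\textbf{Conclusion.} Granting this, Lemma~\ref{l_extrem_lambda} gives $\Delta_0\subseteq\Lambda$, hence $\Delta_\infty=G(\Delta_0)\subseteq\Lambda$. Suppose the action is not minimal; then $\T\setminus\Lambda$ is a nonempty open set of finite total length, so it has a connected component $W$ of maximal length $\ell>0$. Since $\Delta_0\subseteq\Lambda$ and $W\subseteq\T\setminus\Lambda$, $W$ meets no endpoint of an interval of $\cI$; because the gaps of $\Theta$ are degenerate, $\T\setminus\Delta_0=\bigsqcup_{I\in\cI}I$, so the connected set $W$ lies inside a single $I\in\cI$. As $\Theta$ is strictly Markovian, $I$ is not neutral: it has a shortest Markovian sequence $s_1,\dots,s_l$, and infinitesimal geometry yields $s_1'>1$ on $I\supseteq W$, whence $|s_1(W)|=\int_W s_1'>|W|=\ell$. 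But $s_1\in G$ and $\Lambda$ is $G$‑invariant, so $s_1(W)$ is again a connected component of $\T\setminus\Lambda$, contradicting the maximality of $\ell$. Therefore $\Lambda=\T$ and the action is minimal.
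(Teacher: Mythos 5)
You have the right skeleton --- reduce to verifying the hypothesis of Lemma~\ref{l_extrem_lambda} (first returns are fixed-point-free on their interval), deduce $\Delta_0\subset\Lambda$, and then rule out a nondegenerate gap by an expansion-of-length argument --- and your concluding maximal-length argument is a clean variant of the paper's monotone-sequence argument (both are correct; yours perhaps reads slightly more crisply). But you have explicitly left a gap in the crux step, and that gap is real: as written your proof is incomplete. The fix is, however, short, and it uses only the combinatorics of the Markovian expansion --- not the geometric estimate $|s_1(I)|>|I|$ as you guessed.

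The point you are missing is that the induction should run \emph{backwards} from $j=n$ rather than forwards from $j=0$. Keep your notation $g=h^{-1}=s_{n-1}\cdots s_0$, $g_j=s_{j-1}\cdots s_0$, $I_j=I_{x_j}$, $I_n=I_0$. The key observation is that for every $j$, one has $I_j\subset s_{j-1}(I_{j-1})$: if the shortest Markovian sequence for $I_{j-1}$ has length $>1$ this is an equality, and if it has length $1$ then $s_{j-1}(I_{j-1})$ is $\cI$-Markovian and $I_j$ is its leftmost $\cI$-interval. Equivalently, $s_{j-1}^{-1}(I_j)\subset I_{j-1}$. Now take any $y\in h(I_0)$, so that $g_n(y)=g(y)\in I_0=I_n$; a downward induction using $g_{j-1}(y)=s_{j-1}^{-1}\bigl(g_j(y)\bigr)\in s_{j-1}^{-1}(I_j)\subset I_{j-1}$ gives $g_j(y)\in I_j$ for \emph{every} $j\in\{0,\dots,n\}$. (This is exactly where the forward approach fails: after a Markovian step the forward image $g_j(I_0)$ overshoots $I_j$, but the backward preimage of $I_j$ never overshoots $I_{j-1}$.) Hence $g'(y)=\prod_{j=0}^{n-1}s_j'\bigl(g_j(y)\bigr)>1$ by the infinitesimally geometric hypothesis, and since $g(h(I_0))=I_0$ this gives $h'(x)<1$ for all $x\in I_0$. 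The mean value theorem then kills any fixed point of $h$ in $I_0$, Lemma~\ref{l_extrem_lambda} applies, and your concluding paragraph finishes the proof.
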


\begin{proof}
Let $(G,\alpha,T)$ be such a group and $\Theta$ the infinitesimally geometric ping-pong partition.
We assume by contradiction that the minimal invariant set $\Lambda$ is a Cantor set. Note first that as $\Theta$ is infinitesimally geometric, for every first return $h$ to $I$, we have $h'(x)<1$ for every $x\in I$. This proves, by the mean value theorem, that the first return $h$ has no fixed point on $I$. By Lemma \ref{l_extrem_lambda}, this implies that $\Delta_0\subset\Lambda$. We deduce that all gaps of $\Lambda$ are contained inside the intervals of $\cI$. Let $J_0$ be a gap of $\Lambda$, and let $I_0\in \cI$ be such that $J_0\subset I_0$. By induction, we can find sequences of generators $\{s_n\}_{n\in\N}$, gaps $\{J_n\}_{n\in\N}$ of $\Lambda$, and intervals $\{I_n\}_{n\in\N}$ of $\cI$ such that for every $n\in\N$:
\begin{enumerate}
	\item $J_n\subset I_n$;
	\item either $s_{n}(I_n)=I_{n+1}$, or $s_{n}(I_n)$ is $\cI$-Markovian and $I_{n+1}\subset s_{n}(I_n)$;
	\item $s_{n}'(x)>1$ for every $x\in I_n$;
	\item $s_{n}(J_n)=J_{n+1}$.
\end{enumerate}
From these conditions it is clear that the sequence of lengths $|J_n|$ is strictly increasing, and that the gaps $J_n$ are pairwise disjoint (if two gaps of $\Lambda$ have non-empty intersection they must be equal). This contradicts the fact that there are only finitely many gaps whose length exceeds a given constant.
\end{proof}

From this discussion, we deduce that ping-pong partitions for free groups admit minimal real-analytic realizations.

\begin{proof}[Proof of Theorem \ref{t.minimal_actions}]
In the case of a free group $G$, a marking corresponds to the choice of a symmetric free generating system $S$ (see \S\ref{ss.Free_DKN}). We also choose a subset $S_0\subset S$ such that $S=S_0\sqcup S_0^{-1}$.	
Let $\Theta$ be a ping-pong partition for $(G,S)\subset \Homeo_+(\T)$. After Lemma \ref{lem_collapse}, we can assume that $\Theta$ is strictly  Markovian, and after Lemma \ref{lem_geometric} we can assume that $\Theta$ is geometric.
Using Lemma \ref{l:interpolationCw}, for every $s\in S_0$ we can find a real-analytic diffeomorphism $\overline s\in \Diff_+^\omega(\T)$ such that $s(x)=\overline s(x)$ for every $x\in \Delta_0\cup s^{-1}(\Delta_0)$, and such that if $I\in\mathcal I$ is such that $|s(I)|>|I|$ then $\overline s'(x)>1$ on $I$, whereas if $|s^{-1}(I)|>|I|$, then $(\overline s^{-1})'(x)>1$ on $I$. The first condition guarantees that the marked subgroup $(\overline G=\langle \overline S\rangle,\overline S)$, where $\overline S=\{\overline s,\overline s^{-1}:s\in S_0\}$, has an equivalent ping-pong partition (it is basically the starting partition $\Theta$), which is moreover geometric. By Theorem \ref{t:bpvfreeB}, this gives that the action of $\overline G$ is semi-conjugate to the action of $G$. The second condition on derivatives guarantees that the partition $\Theta$ for $(\overline G,\overline S)$ is infinitesimally geometric, so that from Lemma \ref{l.minimality} we deduce that the action of $\overline G$ is minimal.
\end{proof}

\subsection{Classical and exotic examples}\label{s:exotic}

In this subsection we discuss various examples of ping-pong partitions. 
First we illustrate how one can produce examples of actions of some amalgamated free products or HNN extensions. The examples we give are quite basic, as considering more complex examples would lead to unreadable pictures.

\begin{ex}\label{ex:free2}
The standard example is the partition for the standard action of $\PSL(2,\Z)\cong \Z_2*\Z_3$, which corresponds to the classical Farey tessellation of the disc (see Example \ref{ex:Farey}).
The second example is the partition for the standard action of $\mathsf{SL}(2,\Z)\cong \Z_4*_{\Z_2}\Z_6$, which is a lift of degree 2 of the previous partition.
See Figure~\ref{fig:clex2} (left).
\end{ex}

\begin{ex}\label{ex:free3}
Similarly we can construct examples of arbitrary free products of finite cyclic group. An example for the free product $\Z_3*\Z_4*\Z_5$ appears in  Figure~\ref{fig:clex2} (right).
\end{ex}

\begin{ex}\label{ex:free4}
It is also easy to exhibit examples of ping-pong partitions for HNN extensions corresponding to free products. In Figure~\ref{fig:clex4} (left), we give an example for the group $\Z_3*_{\{\id\}}\cong \Z_3*\Z$.
\end{ex}

\begin{ex}\label{ex:free5}
Taking lifts of the previous examples, we can obtain ping-pong partitions for actions of HNN extensions. See Figure~\ref{fig:clex4} (right).
\end{ex}

\begin{figure}[ht]
\includegraphics{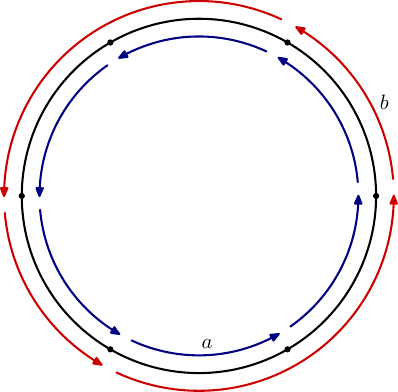}\quad\quad\includegraphics{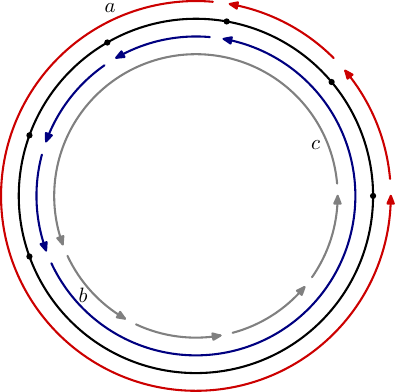}
\caption{Left: ping-pong partition for $\Z_4*_{\Z_2}\Z_6\cong \mathsf{SL}(2,\Z)$ (Example \ref{ex:free2}). Right: ping-pong partition for an action of $\Z_3*\Z_4*\Z_5$ (Example \ref{ex:free3}).}
\label{fig:clex2}
\end{figure}

\begin{figure}[ht]
\includegraphics{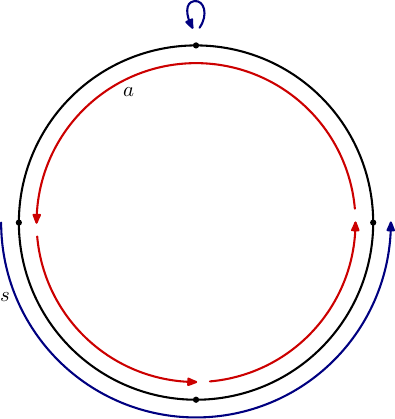}\quad\quad \includegraphics{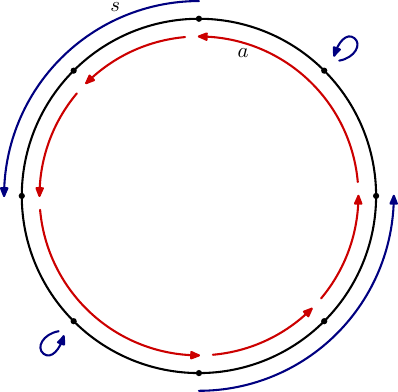}
\caption{Left: a ping-pong partition for an action of the HNN extension $\Z_3*_{\{\id\}}\cong \Z_3*\Z$ (Example \ref{ex:free4}). Right: a ping-pong partition for an action of the HNN extension $\Z_6*_{\Z_2}$ (Example \ref{ex:free5}).}
\label{fig:clex4}
\end{figure}

The next series of examples are for \emph{free groups}, and they illustrate how one can easily construct locally discrete subgroups of $\Diff_+^\omega(\T)$ whose action is minimal, and which are not conjugate into a central extension of $\PSL(2,\R)$.

\begin{ex}\label{ex:exotic1}
Using Theorem \ref{t.minimal_actions}, we can find elements $f,g,h\in \Diff_+^{\omega}(\T)$ that generate a locally discrete, free group of rank $3$ that acts minimally and such that: 
\begin{itemize}
	\item $f$ has exactly four fixed points: two hyperbolic attracting and two hyperbolic repelling;
	\item $g$ and $h$ have exactly two fixed points each, one hyperbolic attracting and one hyperbolic repelling.
\end{itemize}
See Figure~\ref{fig:ex1} (left).
\end{ex}

\begin{ex}\label{ex:exotic2}
Similarly, we can find $f,g\in \Diff_+^{\omega}(\Sc)$ that generate a locally discrete, free group of rank $2$ that acts minimally and such that:  
\begin{itemize}
	\item $f$ has exactly two fixed points, one hyperbolic attracting and one hyperbolic repelling;
	\item $g$ has exactly two fixed points, both of them parabolic.
\end{itemize}
See Figure~\ref{fig:ex1} (right).
\end{ex}

\begin{ex}\label{ex:exotic3}
Whereas the first two examples can be realized by choosing generators that individually belong to some finite central extension of $\PSL(2,\R)$, it is possible to describe other examples where this does not happen.

For instance, there exist $f,g,h\in \Diff_+^{\omega}(\T)$ that generate a locally discrete, free group of rank $3$ that acts minimally and such that: 
\begin{itemize}
	\item $f$ has exactly three fixed points: one parabolic, one hyperbolic attracting and one hyperbolic repelling;
	\item $g$ has exactly two fixed points, one hyperbolic attracting and one hyperbolic repelling;
	\item $h$ has exactly one fixed point, which is parabolic.
\end{itemize}
See Figure~\ref{fig:ex3}.
\end{ex}

\begin{figure}[ht]
\includegraphics{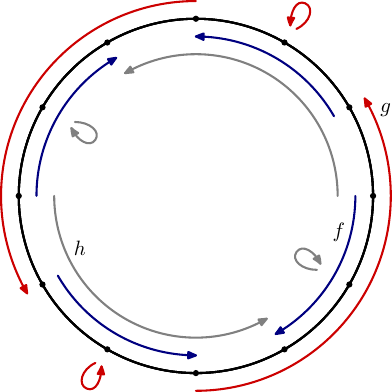}\quad\quad\includegraphics{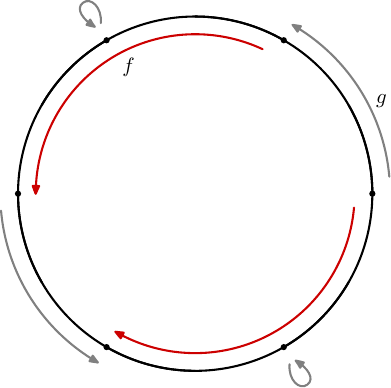}
\caption{Left: ping-pong partition  for Example \ref{ex:exotic1}. Right: ping-pong partition  for Example \ref{ex:exotic2}.}
\label{fig:ex1}
\end{figure}

\begin{figure}[ht]
\includegraphics{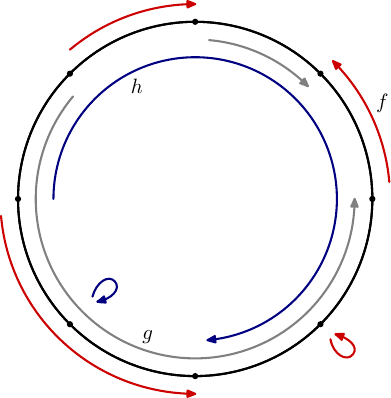}
\caption{Ping-pong partition  for Example \ref{ex:exotic3}.}
\label{fig:ex3}
\end{figure}

Note that, even if an action can be described by inequivalent ping-pong partitions, Example \ref{ex:exotic3} is indeed different from Example \ref{ex:exotic1}. In this case, this can be checked combinatorially by counting the different orbits of gaps of the partition, which is an invariant  for the action (see also the related work \cite{MMRT}). For the action in Example \ref{ex:exotic1} there are 6 such orbits, while the action in Example \ref{ex:exotic3} has 4 such orbits.


\section{Dynamical properties of DKN partitions}\label{s:dynamics}
\subsection{The DKN partition for free groups}\label{ss.Free_DKN}

In the case of locally discrete free subgroups $G\subset \Diff_+^\omega(\T)$, the ping-pong partitions given by Theorem \ref{t:bpvfreeA} are a straightforward consequence of the work of Deroin, Kleptsyn, and Navas \cite{DKN2014}. In this case a marking is simply a choice of a free, symmetric generating system $S$ for $G$. We will say that $(G,S)$ is a \emph{marked free group}. We will denote by $\|\cdot\|$ the word-length on $G$ with respect to the generating system $S$. 
For any $s\in S$, consider the subset $W_s$ of $G$ defined by
\begin{equation}\label{eq:Wsets}
	W_s=\left \{g\in G\mid g=s_\ell\cdots s_1\text{ in normal form, with } s_1=s\right \},
\end{equation}
and then consider the subset $U_s$ of the circle defined by
\begin{equation}\label{eq:Usets}
	U_{s}=\left\{x\in\T\,\middle\vert\,\exists\text{ neighbourhood }I_x\ni x\text{ s.t.~}\lim_{n\to\infty}\sup_{g\notin W_s,\,\|g\|\ge n}|g(I_x)|=0\right\}.
\end{equation}

\begin{rem}\label{r:differentUs}
	The definition \eqref{eq:Usets} of the subsets $U_s$ is the one given in \cite{MarkovPartitions1} and it slightly differs from the one used in \cite{DKN2014}. We refer to Appendix \ref{appendix} for a comparison between the two different definitions (see in particular Proposition \ref{p.equivalence_DKN}).
\end{rem}

The following properties of the collection of subsets $\{U_s\}_{s\in S}$ have essentially been proved in \cite{DKN2014}, see also \cite[Theorem 3.1]{MarkovPartitions1} and discussions therein. See also Appendix \ref{appendix}.

\begin{thm}[Deroin, Kleptsyn, and Navas]\label{t:DKNfree}
	Let $(G,S)\subset \Diff_+^\omega(\T)$ be a finitely generated, locally discrete, marked free group of real-analytic circle diffeomorphisms, with minimal invariant set $\Lambda$.
	Consider the collection $\{U_s\}_{s\in S}$ defined in \eqref{eq:Usets}. We have:
	\begin{enumerate}[\rm (1)]
		\item \label{i:open_free} every subset $U_s$  is open;
		
		\item \label{i:finite_free} every subset $U_s$ has finitely many connected components;
		
		\item \label{i:intersection_on_minimal} any two different subsets  $U_s$ have empty intersection inside the minimal invariant set $\Lambda$;
		
		\item\label{i:cover} the union of the subsets $U_s$ covers all but finitely many points of $\Lambda$;
		
		\item \label{eq_ping_pong_free} if $s,t\in S$, are such that $t\neq s$ then  
		$s(U_t)\subset U_{s^{-1}}$.
	\end{enumerate}
\end{thm}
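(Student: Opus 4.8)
The plan is to separate the five claims into those that follow formally from the structure of the free group $(G,S)$ and the definition~\eqref{eq:Usets} — namely \eqref{i:open_free} and \eqref{eq_ping_pong_free}, which use neither local discreteness nor real-analyticity — and the substantive ones, \eqref{i:finite_free}, \eqref{i:intersection_on_minimal} and \eqref{i:cover}, which encode the expansivity of locally discrete real-analytic actions and are the content of \cite{DKN2014}. For the latter I would import the expansion mechanism of Deroin--Kleptsyn--Navas together with the dictionary between the sets~\eqref{eq:Usets} and those of \cite{DKN2014}, that is \cite[Theorem~3.1]{MarkovPartitions1} (see also Remark~\ref{r:differentUs} and Appendix~\ref{appendix}).

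Claim~\eqref{i:open_free} is immediate: if $x\in U_s$ is witnessed by an open neighbourhood $I_x$, then $I_x$ is a neighbourhood of each of its points, so $I_x\subset U_s$. For~\eqref{eq_ping_pong_free}, fix $t\ne s$ and $x\in U_t$ with witnessing neighbourhood $I_x$; I claim $s(I_x)$ witnesses $s(x)\in U_{s^{-1}}$. Indeed, if $g\notin W_{s^{-1}}$ then $gs$ involves no cancellation (the rightmost letter of $g$, if any, is not $s^{-1}$), so $gs\in W_s$ and $\|gs\|=\|g\|+1$. Since $s\ne t$, the sets $W_s$ and $W_t$ are disjoint, hence $gs\notin W_t$, and therefore for every $n$
\[
\sup_{g\notin W_{s^{-1}},\ \|g\|\ge n}\bigl|g\bigl(s(I_x)\bigr)\bigr|=\sup_{g\notin W_{s^{-1}},\ \|g\|\ge n}\bigl|(gs)(I_x)\bigr|\le\sup_{h\notin W_t,\ \|h\|\ge n}\bigl|h(I_x)\bigr|,
\]
and the right-hand side tends to $0$ as $n\to\infty$ because $x\in U_t$; thus $s(x)\in U_{s^{-1}}$, proving \eqref{eq_ping_pong_free}.

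For~\eqref{i:intersection_on_minimal}, observe that $G\setminus\{\id\}=\bigsqcup_{r\in S}W_r$, so for $s\ne t$ we have $(G\setminus W_s)\cup(G\setminus W_t)=G$; hence a point $x\in U_s\cap U_t$ has a neighbourhood $I$ witnessing both memberships, and then $|g(I)|\to 0$ as $\|g\|\to\infty$. This is incompatible with $x\in\Lambda$: the expansion mechanism of \cite{DKN2014} (see also \cite[Theorem~3.1]{MarkovPartitions1}) provides, for every $x\in\Lambda$ and every interval $I\ni x$, elements of $G$ of arbitrarily large word length expanding $I$ to a definite size $\eps_0>0$. Hence $U_s\cap U_t\cap\Lambda=\varnothing$. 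Finally, claims~\eqref{i:finite_free} and~\eqref{i:cover} are the heart of \cite{DKN2014}: relying on local discreteness through this same expansion mechanism and on real-analyticity through the exclusion of wandering phenomena of Hector--Denjoy type, one shows that — up to finitely many points of $\Lambda$ — the sets $U_s$ are the atoms of a \emph{finite} Markov partition of $\T$, whence both the finiteness of the number of connected components of each $U_s$ and the finiteness of $\Lambda\setminus\bigcup_s U_s$ follow; the remaining step is to transport these conclusions from the sets of \cite{DKN2014} to those of~\eqref{eq:Usets} via \cite[Theorem~3.1]{MarkovPartitions1}.

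The part I expect to be the real obstacle is precisely the finiteness underlying \eqref{i:finite_free} and \eqref{i:cover}: the finiteness of the DKN Markov partition fails without local discreteness and genuinely requires real-analyticity (via Hector-type arguments ruling out infinitely many independent contractions), so I would not reprove it here but cite \cite{DKN2014}. The one genuinely new verification in the present setting is the equivalence between the two definitions of $U_s$ discussed in Remark~\ref{r:differentUs}, Appendix~\ref{appendix} and \cite[Theorem~3.1]{MarkovPartitions1}.
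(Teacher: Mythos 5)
Your proposal is correct and follows essentially the same route as the paper. The paper defers items~\eqref{i:finite_free} and~\eqref{i:cover} to \cite{DKN2014} together with the $\widetilde{\mathcal{M}}_s\leftrightarrow U_s$ dictionary of Appendix~\ref{appendix} and \cite[Theorem~3.1]{MarkovPartitions1}, and proves item~\eqref{i:intersection_on_minimal} in Lemma~\ref{l.UM_vidouille} by exactly your observation that $g\notin W_s$ or $g\notin W_{s'}$ for every $g$, contradicted by expansion at a point of $\Lambda$ (there via Sacksteder's theorem, in your version via the DKN expansion mechanism --- both are valid); items~\eqref{i:open_free} and~\eqref{eq_ping_pong_free} are, as you say, formal consequences of \eqref{eq:Usets} and the normal-form structure of the free group, and your verification of the no-cancellation and length bookkeeping in~\eqref{eq_ping_pong_free} is correct.
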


Note that Theorem  \ref{t:DKNfree} does not imply that the collection of subsets $\{U_s\}_{s\in S}$ is a ping-pong partition, in the sense of Definition \ref{d:markov-partition}, as two distinct elements of $\{U_s\}_{s\in S}$ may have non-empty intersection outside the minimal invariant compact set $\Lambda\subset \T$.
For this reason, for every $s\in S$ we introduce a subset $\hat U_s\subset U_s$ defined by the following procedure: if an endpoint of a connected component of $U_s$ belongs to a gap $J\subset \T\setminus \Lambda$ (that is, a connected component of the complement $\T\setminus \Lambda$), we remove $U_s\cap \overline J$ from $U_s$. As $U_s$ has finitely many connected components, we only have to do this operation finitely many times.
Keeping the terminology of \cite{MarkovPartitions1}, we refer to the collection of subsets $\{U_s\}_{s\in S}$ as to the \emph{DKN partition}, whereas $\{\hat U_s\}_{s\in S}$ is the \emph{DKN ping-pong partition} for $(G,S)$ (which is a ping-pong partition in the sense of Definition \ref{d:markov-partition}).

The following notion plays a crucial r\^ole in the proof of Theorem \ref{t:DKNfree}, and it will also be useful here.

\begin{dfn}\label{d.wandering}
	Let $(G,S)\subset \Diff_+^\omega(\T)$ be a finitely generated, locally discrete, marked free group of real-analytic circle diffeomorphisms, and let $\{U_s\}_{s\in S}$ be the associated DKN partition. Let $s\in S$ and let $I$ be a connected component of $U_s$. 
	A non-trivial element $g\in G$, written in normal form as $g=s_\ell\cdots s_1$, is \emph{wandering} (with respect to $I$) if the following conditions are satisfied:
	\begin{enumerate}
		\item $s_1\neq s$, or equivalently $g\notin W_s$;
		\item the intermediate images $s_k\cdots s_1(I)$, for $k\in\{0,\ldots,\ell-1\}$, are pairwise disjoint.
	\end{enumerate}
	Moreover, if $g$ also satisfies $g(I)\cap I\neq\varnothing$, then we say that $g$ is a \emph{first return} (to $I$).
\end{dfn}

\begin{rem}
	Wandering elements are called admissible elements in the terminology in \cite{DKN2014}.
\end{rem}

The following facts have been proved in \cite[\S 3.4]{DKN2014} in the case of minimal actions, and in \cite[\S 4]{DKN2014} for actions with minimal invariant Cantor sets. Formally speaking (see Remark \ref{r:differentUs}), the results in \cite{DKN2014} apply to connected components of the partition $\{\tcM_s\}_{s\in S}$, but the proofs can be easily adapted to the partition $\{U_s\}_{s\in S}$. Otherwise, the reader can see the discussion in Appendix \ref{appendix}, from which it is possible to deduce all desired results for the partition $\{U_s\}_{s\in S}$ from those for the partition $\{\tcM_s\}_{s\in S}$.

\begin{lem}\label{l:first return prop}
	Let $(G,S)\subset \Diff_+^\omega(\T)$ be a finitely generated, locally discrete, marked free group of real-analytic circle diffeomorphisms, and let $\{U_s\}_{s\in S}$ be the associated DKN partition. For any $s\in S$ and  connected component $I=(x_-,x_+)$ of $U_s$,  the following properties hold:
	\begin{enumerate}[\rm (1)]
		\item two distinct wandering elements $g$ and $h$ have disjoint images: $g(I)\cap h(I)=\varnothing$;
		\item there exist two first returns $g_-$ and $g_+$ fixing $x_-$ and $x_+$ respectively, such that $g^k_\pm(x)\to x_\pm$ as $k\to\infty$ for every $x\in I$;
		\item there exists a non-empty closed interval $J\subset I$ such that 
		$I=g_-(I)\sqcup J\sqcup g_+(I)$;
		in particular for every first return $g$ which is distinct from $g_{\pm}$ one has $g(I)\subset J$;
		\item \label{decomposition} for every $g\notin W_s$, there exists $n\in\N$ such that $g$ can be written uniquely as
		\begin{equation*}\label{eq.decomposition_first_returns}
			g=\lambda g_n\cdots g_1,
		\end{equation*}
		where for every $i\in\{1,\ldots,n\}$, the element $g_i\in G$ is a first return and the element $\lambda\in G$ is either wandering or trivial.
	\end{enumerate}
\end{lem}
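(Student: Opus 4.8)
### Plan for proving Lemma \ref{l:first return prop}

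The plan is to deduce the four properties from the structural properties of the $U_s$ collected in Theorem~\ref{t:DKNfree} together with the combinatorial bookkeeping of normal forms in the free group, essentially transcribing the arguments of Deroin--Kleptsyn--Navas to the present normalization of the partition. Throughout, fix $s\in S$ and a connected component $I=(x_-,x_+)$ of $U_s$; the key mechanism is property \eqref{eq_ping_pong_free} of Theorem~\ref{t:DKNfree}, namely $t(U_u)\subset U_{t^{-1}}$ whenever $t\neq u$, which forces the intermediate images of $I$ under a word not beginning with $s$ to be nested into the $U$'s of the partition.

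First I would prove (1). Let $g=s_\ell\cdots s_1$ and $h=t_m\cdots t_1$ be distinct wandering elements; arguing by contradiction, suppose $g(I)\cap h(I)\ne\varnothing$. Cancelling common initial letters, we may reduce to the case where $s_1\neq t_1$ (both different from $s$, since $g,h\notin W_s$). Then, using \eqref{eq_ping_pong_free} repeatedly, $g(I)\subset U_{s_\ell^{-1}}$ lies inside the connected component of $U_{s_\ell^{-1}}$ hit first, and likewise $h(I)$ lies inside a component of $U_{t_m^{-1}}$; if $s_\ell\neq t_m$ these are disjoint by Theorem~\ref{t:DKNfree}\eqref{i:intersection_on_minimal} after passing to a further refinement, and if $s_\ell=t_m$ one peels off the last letter and induces on $\ell+m$, using the disjointness of the intermediate images encoded in the definition of wandering to control the overlaps. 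The bookkeeping here — matching up which intermediate image of $I$ the overlap propagates back to — is the one genuinely delicate point, and it is exactly the content of \cite[\S3.4]{DKN2014}.

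Next, (2) and (3) go together. Consider the collection of all wandering elements $g$ with $g(I)\subset I$ (``first returns''); by (1) their images are pairwise disjoint subintervals of $I$. A first return $g$ that fixes $x_-$ is produced by following, from $x_-$, the itinerary of letters dictated by which component of the partition the orbit lands in — since $x_-$ is an endpoint of a component of $U_s$ and the partition is finite, this itinerary is eventually periodic, and the period gives a word $g_-$ with $g_-(x_-)=x_-$, $g_-\notin W_s$, and $g_-(I)\subset I$ abutting $x_-$; the contraction $g_-^k(x)\to x_-$ follows because $g_-$ maps $I$ strictly inside itself on the side of $x_-$ and the intersection $\bigcap_k g_-^k(I)$ is a fixed interval that must degenerate to $\{x_-\}$ (otherwise one contradicts local discreteness, or the definition of $U_s$ via shrinking of neighbourhoods). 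Symmetrically for $x_+$. That $I=g_-(I)\sqcup J\sqcup g_+(I)$ with $J$ a non-empty closed interval, and that every other first return lands in $J$, is then forced by (1): no wandering image can stick out past $g_\pm(I)$ without overlapping it, since any image abutting $x_\pm$ from inside would, by uniqueness of the itinerary, coincide with a power of $g_\pm$.

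Finally, (4) is a pure combinatorial decomposition. Given $g\notin W_s$, follow the itinerary of $I$ under the successive letters of $g$ read right to left; each time the running image returns inside $I$ we have completed a first-return block $g_i$ (by the minimality of the return time this block is itself a first return in the sense above, using (3) to see it is ``honest''), and we strip it off. After finitely many such returns — finitely many because $g$ has finite length and each block consumes at least one letter — what remains is a word $\lambda$ whose intermediate images of $I$ are pairwise disjoint and never return to $I$, i.e. $\lambda$ is wandering (or trivial). Uniqueness of $n$ and of the factors follows because the decomposition is read off deterministically from the normal form of $g$ and the itinerary, there being no choice at any stage. I would only sketch this, as it is routine once (1)--(3) are in hand; the substantive work, and the main obstacle, is the nesting/disjointness argument in (1)--(2), which is where all the dynamics (local discreteness, real-analyticity through Theorem~\ref{t:DKNfree}) actually enters.
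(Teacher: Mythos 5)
The paper does not actually prove this lemma: it cites Deroin--Kleptsyn--Navas \cite[\S 3.4 and \S 4]{DKN2014} and remarks that those arguments, originally written for the partition $\{\tcM_s\}$, ``can be easily adapted'' to $\{U_s\}$, with the Appendix supplying the bridge between the two partitions. So there is no in-paper proof to compare against. Your attempt is a reconstruction of the DKN argument, and you are right that the substantive content is there; but one specific step in your sketch is not merely a gap, it is a wrong heuristic.

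For part (2) you assert that a first return $g_-$ fixing $x_-$ is produced by ``following the itinerary of letters dictated by which component of the partition the orbit lands in,'' and that ``since $x_-$ is an endpoint of a component of $U_s$ and the partition is finite, this itinerary is eventually periodic.'' Finiteness of the alphabet does not make a symbolic itinerary eventually periodic: one would need the itinerary to be deterministic \emph{and} the relevant orbit to be confined to a finite set, neither of which is available a priori. The endpoints of components of $U_s$ are not assumed to lie on a finite invariant set, and the ``itinerary'' of $x_-$ is not well-defined as a single sequence — at each step several generators may move the image into some $U_{t^{-1}}$, and the image need not land on an endpoint of the target component. In \cite{DKN2014} the existence of the first returns $g_\pm$ is a genuinely analytic statement: it rests on the finiteness of the sum $\widehat S(g,x)$ at interior points, Schwarz/Koebe-type control of distortion, Sacksteder's theorem, and ultimately local discreteness and real-analyticity, which together force the accumulation of wandering images to be governed by hyperbolic fixed points at $x_\pm$. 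Your argument never invokes any of this and so cannot reach the conclusion.

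Part (1) also has a soft spot you gesture at but do not close: Theorem~\ref{t:DKNfree}\eqref{i:intersection_on_minimal} gives disjointness of the $U_s$ only on $\Lambda$, and two wandering images can a priori overlap inside a gap of $\Lambda$. You call this ``passing to a further refinement,'' which is not an argument. The combinatorial decomposition in (4) is fine once (1)--(3) are available, and your description of peeling off first-return blocks and of uniqueness is essentially correct. But as written, (2) is argued by a false principle rather than by the distortion-control mechanism that actually underlies the DKN construction, and this is the substantive mathematical gap in the proposal.
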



From this we deduce the following proposition.

\begin{prop}\label{p.I_tend_zero}
	Let $(G,S)\subset \Diff_+^\omega(\T)$ be a finitely generated, locally discrete, marked free group of real-analytic circle diffeomorphisms, and let $\{U_s\}_{s\in S}$ be the associated DKN partition. For any $s\in S$ and  connected component $I$ of $U_s$, one has
	\[\lim_{n\to\infty}\sup_{g\notin W_s,\,\|g\|\ge n}|g(I)|=0.\]
\end{prop}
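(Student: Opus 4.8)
The plan is to reduce, via the first-return decomposition of Lemma~\ref{l:first return prop}, to the case of long compositions of first returns, and then to handle that case by a measure-type estimate together with a compactness argument. Fix $s\in S$ and write $I=(x_-,x_+)$ for the connected component under consideration. By parts~(2) and~(3) of Lemma~\ref{l:first return prop} the first returns $g_\pm$ fix $x_\pm$, one has $I=g_-(I)\sqcup J\sqcup g_+(I)$ with $J$ a non-degenerate closed subinterval of $I$, and every first return distinct from $g_\pm$ maps $I$ into $J$; in particular every first return maps $I$ into $I$. Consequently, writing an arbitrary $g\notin W_s$ as $g=\lambda g_n\cdots g_1$ as in Lemma~\ref{l:first return prop}(4) (with each $g_i$ a first return and $\lambda$ wandering or trivial), we get $g_n\cdots g_1(I)\subseteq I$, hence $g(I)\subseteq\lambda(I)$; since the decomposition is reduced, $\|g\|=\|\lambda\|+\sum_i\|g_i\|$, and $w:=g_n\cdots g_1\notin W_s$. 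Now fix $\eps>0$. By Lemma~\ref{l:first return prop}(1) the intervals $\lambda(I)$, over wandering $\lambda$, are pairwise disjoint, so $\sum_\lambda|\lambda(I)|\le|I|$ and the set $\mathcal L_\eps:=\{\lambda\text{ wandering}\mid|\lambda(I)|\ge\eps\}$ is finite; thus there is $M_\eps$ with $\|\lambda\|>M_\eps\Rightarrow|\lambda(I)|<\eps$. Hence if $g\notin W_s$ satisfies $|g(I)|\ge\eps$ then $\lambda\in\mathcal L_\eps\cup\{\id\}$, so $\|w\|\ge\|g\|-M_\eps$ while $|w(I)|\ge|g(I)|/L\ge\eps/L$, where $L:=\max_{\lambda\in\mathcal L_\eps\cup\{\id\}}\mathrm{Lip}(\lambda^{-1})<\infty$. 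Therefore it suffices to prove: for each $\eps'>0$ the set of compositions $w$ of first returns with $|w(I)|\ge\eps'$ has bounded word length (equivalently, $\sup\{|w(I)|\mid w\text{ a composition of first returns},\ \|w\|\ge N\}\to0$ as $N\to\infty$).

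To prove this last point I would first observe that if $w=g_n\cdots g_1$ is a composition of first returns, then for every $0\le j\le n$, peeling off the innermost block $g_j\cdots g_1$ (which maps $I$ into $I$) yields $w(I)\subseteq g_n\cdots g_{j+1}(I)$; and, exactly as in Lemma~\ref{l:first return prop}(1), for each fixed $k$ the intervals $v(I)$ over compositions $v$ of \emph{exactly} $k$ first returns are pairwise disjoint subintervals of $I$, so at most $|I|/\eps'$ of them have length $\ge\eps'$. Suppose the claim failed: there would be compositions $w^{(m)}=g^{(m)}_{n_m}\cdots g^{(m)}_1$ of first returns with $\|w^{(m)}\|\to\infty$, hence $n_m\to\infty$, and $|w^{(m)}(I)|\ge\eps'$. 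For each $k\le n_m$ the outermost suffix $g^{(m)}_{n_m}\cdots g^{(m)}_{n_m-k+1}$ then has image of length $\ge\eps'$, and for fixed $k$ there are only finitely many possibilities for it; since these suffixes are consistent as $k$ grows, König's lemma produces an infinite sequence $\hat g_1,\hat g_2,\dots$ of first returns with $|\hat g_1\cdots\hat g_k(I)|\ge\eps'$ for every $k$. Because $\hat g_{k+1}(I)\subseteq I$, the intervals $\hat g_1\cdots\hat g_k(I)$ are nested, so $I_\infty:=\bigcap_k\overline{\hat g_1\cdots\hat g_k(I)}$ is a non-degenerate interval; its endpoints are the limits of the monotone sequences $\hat g_1\cdots\hat g_k(x_\pm)$, which lie in the $G$-orbit of the endpoints of $I$ and hence accumulate only on $\Lambda$ (cf.\ \cite[proof of Theorem~B]{MarkovPartitions1}), so the endpoints of $I_\infty$ belong to $\Lambda$.

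The main obstacle is to turn this into a contradiction, that is, to rule out the existence of such a non-degenerate "limit cylinder" $I_\infty$. I expect this to follow from the expansion properties of the first-return dynamics on $I$ underlying Lemma~\ref{l:first return prop} in \cite{DKN2014}: the set of points of $I$ contained in $\hat g_1\cdots\hat g_k(I)$ for arbitrarily deep compositions has empty interior (it is essentially contained in $\Lambda$, along which the nested cylinders have diameters tending to $0$). Granting this, $I_\infty$ cannot exist, so $|w^{(m)}(I)|\to0$; combined with the reduction above, this proves the proposition.
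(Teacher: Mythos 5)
There is a genuine gap, which you explicitly flag at the end; it is not a formality but the crux of the proof. Your reduction is sound: writing $g=\lambda g_n\cdots g_1$ as in Lemma~\ref{l:first return prop}(4), noting $g_n\cdots g_1(I)\subset I$ so $g(I)\subset\lambda(I)$, using Lemma~\ref{l:first return prop}(1) to bound $\lambda$ and Lipschitz constants to pass to $w=g_n\cdots g_1$, and then the observation that compositions of exactly $k$ first returns have pairwise disjoint images is correct. The K\H{o}nig-type construction of a nested sequence of cylinders $\hat g_1\cdots\hat g_k(I)$ of length $\ge\eps'$ also works, \emph{provided} you first handle the case $n_m$ bounded (which you skip when you assert ``hence $n_m\to\infty$''; this needs a short separate argument using Lemma~\ref{l:first return prop}(1) and uniform continuity of the finitely many remaining first-return blocks). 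But the final step --- excluding a non-degenerate limit cylinder $I_\infty$ --- is exactly where the proof must do real work, and you offer no argument, only an expectation about ``expansion properties''. There is no lemma available to rule out $I_\infty$; in fact the non-existence of $I_\infty$ is essentially the statement of Proposition~\ref{p.I_tend_zero} applied to the sequence $\hat g_1\cdots\hat g_k$, so invoking it would be circular.

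What is missing is any use of the \emph{definition} of $U_s$. The paper's proof exploits it crucially: the interval $J':=g_-(g_+(I))\cup J\cup g_+(g_-(I))$ is a \emph{closed} subinterval of the open set $I\subset U_s$, so by compactness one gets the uniform estimate
\[
\lim_{n\to\infty}\sup_{g\notin W_s,\ \|g\|\ge n}|g(J')|=0.
\]
Combining this with the decomposition $g=\bar g\, g_1\, g_0^k$ (a maximal power of $g_0\in\{g_-,g_+\}$, then a single distinct first return $g_1$, then the rest) does all the work: when $g_1\ne g_0$ one has $g_1g_0^k(I)\subset J'$, so the outer factor $\bar g$ shrinks it uniformly; the remaining cases ($\tilde g$ wandering, or $k$ large) are handled via the decay \eqref{eq:admissible} for wandering elements and uniform continuity of short words. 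Your approach uses only the combinatorics of Lemma~\ref{l:first return prop} and never taps the compactness built into \eqref{eq:Usets}; that is precisely why the contradiction does not close. To repair the proof you should replace the $I_\infty$-argument by the compactness estimate on $J'$, which then makes the nested cylinders shrink and renders the K\H{o}nig construction unnecessary.
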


\begin{proof}
	We use Lemma \ref{l:first return prop} to fix some notation and constants.
	First, for every element $g\notin W_s$ take the largest $k=k(g)\in \N$ such that
	$g=\tilde g g_0^k$, with $g_0\in \{g_-,g_+\}$ and $\tilde g\notin W_s$. If $\widetilde g$ is not wandering, there exist a first return $g_1\neq g_0$, and $\overline g\notin W_s$ such that $g=\overline g g_1g_0^k$.
	Write 	$I=g_-(I)\sqcup J\sqcup g_+(I)$ as in Lemma \ref{l:first return prop}, then the subset
	\[J':=g_-(g_+(I))\cup J\cup g_+(g_-(I))\]
	is a closed subinterval of $I$.
	Fix now $\eps >0$. By compactness and the definition \eqref{eq:Usets} of $U_s$, there exists $N_0\in\N$ such that
	\begin{equation}\label{eq:first_return0}
		|g(J')|\leq\eps \quad \text{for every $g\notin W_s$ with $\|g\|\ge N_0$}.
	\end{equation}
	Since for fixed $N\in\N$, there are only finitely many elements $g\in G$ with $\|g\|\leq N$, and since they are uniformly continuous on $\T$,  we can fix $\delta(N)\in (0,\eps]$ such that 
	\begin{equation}\label{eq:first_return1}
		|g(I')|\leq\eps\quad \text{for every $g\in G$ with $\|g\|\leq N$ and every interval $I'$ with $|I'|\leq\delta(N)$}.
	\end{equation}
	Observe also that from Lemma \ref{l:first return prop}, we have 
	\begin{equation}\label{eq:admissible}
		\lim_{n\to \infty}\sup_{h\text{ wandering},\,\|h\|\ge n} |h(I)|=0.
	\end{equation}
	In particular there exists a positive integer $N_1\in\N$ such that
	\begin{equation}\label{eq:first_return2}
		|h(I)|\leq\delta(N_0)\quad \text{for every wandering element $h$ of length $\|h\|\geq N_1$}.
	\end{equation}
	Finally if $g_0=g_\pm$ we have $|g_0^k(I)|\to 0$ as $k\to\infty$, so there exists $N_2>0$ such that
	\begin{equation}\label{eq:first_return3}
		|g_0^k(I)|\leq\delta(N_0+N_1)\quad \text{for every $k\geq N_2$}.
	\end{equation}
	Fix now $N:=N_0+N_1+N_2\max\{\|g_-\|,\|g_+\|\}$, take an element $g\notin W_s$ with $\|g\|\ge N$, and write $g=\widetilde gg_0^k$ as at the beginning of the proof. 
	
	\setcounter{case}{0}
	
	\begin{case}\label{case1_first_return}
		$\widetilde g$ is wandering.
	\end{case}
	\begin{proof}[Proof in Case \ref{case1_first_return}]
		Remark that we must have  $\|\widetilde g\|\ge N_0+N_1$ or $k\ge N_2$.	
		Assume first that $\|\widetilde g\|\ge  N_0+N_1$, so that by \eqref{eq:first_return2} we have 
		\[|\widetilde gg_0^k(I)|\le |\widetilde g(I)|\le \delta(N_0)\le \eps.\]
		If otherwise $\|\widetilde g\|\le N_0+N_1$, then $k\ge N_2$ and we deduce from \eqref{eq:first_return3} that $|g_0^k(I)|\le \delta(N_0+N_1)$. As $\|\widetilde g\|\le N_0+N_1$, then $|\widetilde gg_0^k(I)|\le \eps$ by \eqref{eq:first_return1}.
	\end{proof}
	
	\begin{case}\label{case2_first_return}
		$\widetilde g$ is not wandering.
	\end{case}
	\begin{proof}[Proof in Case \ref{case2_first_return}]
		We write $g=\overline g g_1g_0^k$ as at the beginning of the proof.
		Remark that we must have  $\|\overline g\|\ge N_0$ or $\|g_1\|\ge N_1$ or $k\ge N_2$.
		Assume first $\|\overline g\|\ge N_0$, and note that as $g_1\neq g_0$ then we have $g_1g_0^k(I)\subset J'$. Hence $|\overline gg_1g_0^k(I)|\le |\overline g(J)|\le \eps$ by \eqref{eq:first_return0}. Next, assume $\|\overline g\|< N_0$ and $\|g_1\|\ge  N_1$, and note that by \eqref{eq:first_return2} we have
		\[|g_1g_0^k(I)|\le |g_1(I)|\le \delta(N_0).\]
		As $\|\overline g\|< N_0$,  we conclude by \eqref{eq:first_return1} that
		$|\overline gg_1g_0^k(I)|\le \eps$. Finally, the case where $\|\widetilde g\|\le N_0+N_1$ and $k\ge N_2$ follows from \eqref{eq:first_return3} and \eqref{eq:first_return1} as in Case \ref{case1_first_return}.
	\end{proof}
	
	The list of cases being exhaustive, this proves that if $g\notin W_s$ is such that $\|g\|\ge N$, then $|g(I)|\le \eps$, as desired.
\end{proof}

\subsection{Multiconvergence property}
In this subsection we prove Theorem \ref{mthm:multiconvergence} and its consequences.
We first observe that it is enough to consider the case of free groups.

\begin{lem}\label{l:enough_free}
	Let $G\subset \Homeo_+(\T)$ be a group of circle homeomorphisms, and let $H\subset G$ be a  subgroup of finite index. The following statements are equivalent:
	\begin{enumerate}[\rm (1)]
		\item the group $G$ has the multiconvergence property;\label{i:multG}
		\item the subgroup $H$ has the multiconvergence property.\label{i:multH}
	\end{enumerate}
\end{lem}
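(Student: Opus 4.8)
The plan is to prove both implications directly from the definition of the multiconvergence property, using that $H$ has finite index in $G$, say $[G:H]=N$, with coset representatives $\math{g}_1=\mathrm{id},g_2,\dots,g_N$ so that $G=\bigsqcup_{j=1}^N Hg_j$. For the implication \eqref{i:multG}$\Rightarrow$\eqref{i:multH}: if $G$ has the multiconvergence property with constant $K$, then in particular every infinite sequence of distinct elements of $H\subset G$ admits a subsequence converging to a locally constant map of the required form, with control sets of size at most $K$. So $H$ itself has the multiconvergence property with the same constant $K$. This direction is essentially immediate, since the condition is inherited by subgroups verbatim.

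For the converse \eqref{i:multH}$\Rightarrow$\eqref{i:multG}, which is the substantial direction, suppose $H$ has the multiconvergence property with uniform constant $K$. Let $\{g_n\}_n$ be an infinite sequence of distinct elements of $G$. By the pigeonhole principle, after passing to a subsequence we may assume all $g_n$ lie in a single coset $Hg_j$, i.e.\ $g_n=h_ng_j$ with $h_n\in H$ distinct. Applying the multiconvergence property to $\{h_n\}_n$ in $H$, we extract a further subsequence $\{h_{n_k}\}_k$, finite sets $A,R\subset\T$ with $\#A,\#R\le K$, such that $h_{n_k}\restriction_{\T\setminus R}$ converges pointwise to a locally constant map with image $A$ and discontinuities at $R$, fixing each point of $A\setminus R$. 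Now $g_{n_k}=h_{n_k}g_j$; since $g_j$ is a fixed homeomorphism, $g_{n_k}\restriction_{\T\setminus g_j^{-1}(R)}=h_{n_k}\circ g_j\restriction_{\T\setminus g_j^{-1}(R)}$ converges pointwise to the locally constant map $x\mapsto g_\infty(g_j(x))$ with image still $A$ and with discontinuities exactly at $g_j^{-1}(R)$, a set of cardinality $\le K$.

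The one point requiring care is the last clause of Definition \ref{d:multiconvergence}, namely that the limit map fixes the points of its image lying outside the removed set: we need $g_\infty(g_j(a))=a$ for $a\in A\setminus g_j^{-1}(R)$, but the hypothesis on $\{h_{n_k}\}$ only gives $g_\infty$ fixing points of $A\setminus R$, not points of $g_j(A)\setminus R$. I would handle this by a further extraction: compose also on the left, i.e.\ instead of applying multiconvergence directly to $\{h_n\}$, apply it to the sequence $\{g_j^{-1}h_ng_j\}_n$ (still in a finite-index-bounded number of cosets of $H$, so after pigeonhole in $H$ itself, or just note $g_j$ normalizes nothing but we only need the sequence, not a subgroup), or more robustly, observe that the "fixed-point" clause can be arranged by absorbing finitely many exceptional points into $R$: enlarging $R$ by the finite set $A\cup g_j(A)\cup g_j^{-1}(A)$ keeps its cardinality bounded by a uniform constant $K'=K'(K,N)$ and makes the required fixing condition vacuous on the enlarged removed set while the image shrinks accordingly. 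Thus $G$ has the multiconvergence property with constant $K'$. The main obstacle, then, is purely bookkeeping: matching the normalization conventions in Definition \ref{d:multiconvergence} (which points are "attractors", which are "discontinuities", and the precise fixing requirement) after conjugating/translating by the fixed homeomorphism $g_j$, and checking that all the relevant finite sets stay uniformly bounded — there is no real dynamical content beyond the pigeonhole step.
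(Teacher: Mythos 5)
Your proposal follows the same route as the paper's proof: the implication $(1)\Rightarrow(2)$ is immediate by restriction, and for $(2)\Rightarrow(1)$ you pigeonhole into a single coset $Ht$, apply the multiconvergence property to the $H$--part, and transport the data by the fixed coset representative. That is exactly what the paper does.

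What you add, and what the paper's own write-up does not explicitly verify, is the worry about the last clause of Definition~\ref{d:multiconvergence}: that the limit map fix the points of $A\setminus R$. Your concern is legitimate — this clause is \emph{not} automatically preserved under right-composition with $t$, since $g_\infty(x)=h_\infty(t(x))$ and the hypothesis controls $h_\infty$ only on $A\setminus R$, not on $t(A)$. The paper's proof simply asserts that ``locally constant with image $A$'' suffices, without addressing this point. So you have correctly spotted a gap that the published argument also leaves open.

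Your two proposed patches, however, do not quite close it. Conjugating to the sequence $g_j^{-1}h_ng_j$ does not produce a sequence in $H$ unless $H$ is normal; you could first pass to the normal core of $H$ in $G$ (still of finite index), but you would then need to explain why the data obtained from the conjugated sequence yields the desired $A$, $R$ for $\{g_n\}$, which you don't do. The second patch — absorbing $A\cup g_j(A)\cup g_j^{-1}(A)$ into $R$ — does make the fixing clause vacuous, but it violates the other clause ``with $\#R$ discontinuity points at $R$'': the enlarged set would contain points where $g_\infty$ is in fact continuous. (The paper's remark after Theorem~\ref{mthm:multiconvergence} makes explicit that non-discontinuity points are to be \emph{removed} from $R$, not added.) So neither patch works as stated; a correct repair would need either a more careful choice of $A,R$ from the dynamics, or an argument that the clause is redundant for sequences arising from group actions — which is plausible but is not established either by you or by the paper.

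In short: same approach as the paper, plus a correct observation of a gap that the paper's own proof shares, but with proposed fixes that do not yet resolve it.
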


\begin{proof}
	The only non-trivial implication is \eqref{i:multH} $\Rightarrow$ \eqref{i:multG}. That is, we assume that $H$ satisfies the multiconvergence property with a uniform constant $K\in\N$.	
	Let $\{g_n\}_n\subset G$ be an infinite sequence of distinct elements. As $H\subset G$ has finite index, there exists a coset $Ht$ to which infinitely many elements of the sequence $\{g_n\}_n$ belong. By finite-index assumption, upon extracting, we can assume that $\{g_n\}_n$ is contained in a single coset $Ht$, and we write $g_n=h_nt$, with $h_n\in H$.
	Clearly the sequence $\{h_n\}_n\subset H$ consists of distinct elements, and up to passing to a subsequence, we can assume that there exist finite subsets $A$ and $R\subset \T$ with $\#A=\#R\le K$ for which the sequence of restrictions $\{h_n\restriction_{\T\setminus R}\}_n$ converges to the locally constant map $h_\infty$, with discontinuities at $R$ and values in $A$, as in Definition~\ref{d:multiconvergence}.	Then the sequence of restrictions $\{g_n\restriction_{\T\setminus t^{-1}(R)}\}_n$ converges to the map $h_\infty t$, which is locally constant on $\T\setminus t^{-1}(R)$, and whose image is $A$. Therefore $G$ satisfies the multiconvergence property, with the same uniform bound $K\in\N$ as for the subgroup $H$.
\end{proof}

\begin{proof}[Proof of Theorem~\ref{mthm:multiconvergence}]
	After Lemma~\ref{l:enough_free}, it is enough to assume that $G$ is a free group, freely generated by a finite symmetric set $S$; we denote by $||\,\cdot\,||$ the corresponding word-length function. Consider the corresponding DKN partition, given by the subsets $U_s$ as in \eqref{eq:Usets}. We will prove that $G$ verifies the multiconvergence property with $K=\max_{s\in S}b_0(U_s)$, where  $b_0(U_s)$ stands for the zeroth Betti number (number of connected components) of the open subset $U_s$. For this, let us take an infinite sequence $\{g_m\}_m$ of distinct elements of $G$.
	Up to passing to a subsequence if necessary, we can assume that $\{g_m\}_m$ converges monotonically to a point of the boundary $\partial G$ of the free group $G$: there exist a monotone sequence of integers $\{n_m\}_m\subset \N$ and a sequence $\{s_{n}\}_n\subset S$ of generators such that for every $n\in\N$, $s_{n}\neq s_{n+1}^{-1}$ and for every $m\in \N$ one has
	\[g_m= s_{n_m} s_{{n_m}-1}\cdots s_{1}.\]
	First, we find the repelling subset $R$. We start by the following elementary observation.
	
	\begin{claim1}\label{cl:easy}
		For every $n\in\N$ we have
		\begin{equation}
			\label{eq.easy_claim}
			s_{1}^{-1}\cdots s_{n}^{-1} (U_{s_{{n+1}}})\subset s_{1}^{-1}\cdots s_{{n-1}}^{-1} (U_{s_{{n}}}).
		\end{equation}
	\end{claim1}
	\begin{proof}[Proof of claim]
		Indeed for every $n\in\N$ we have $s_{{n+1}}\neq s_{{n}}^{-1}$ so by  Theorem \ref{t:DKNfree}.\eqref{eq_ping_pong_free} we have $s_{n}^{-1}(U_{s_{{n+1}}})\subset U_{s_{n}}.$ So the claim follows by precomposing by the element $s_{1}^{-1}\cdots s_{{n-1}}^{-1}.$
	\end{proof}
	
	As a consequence of Claim \ref{cl:easy}, the subsets $R_n:=\overline{s_1^{-1}\cdots s_{{n-1}}^{-1} (U_{s_{{n}}})}$
	form a decreasing sequence of non-empty compact subsets in $\overline{U_{s_1}}$. So the intersection
	\[R:=\bigcap_{n\in\N} R_n\]
	is a non-empty compact subset.
	
	\begin{claim1}\label{cl:cc}
		The number of connected components of $R$ is bounded above by $K=\max_{s\in S}b_0(U_{s})$.
	\end{claim1}
	
	\begin{proof}[Proof of claim]
		Indeed for every $n\in\N$, the subset $R_n$ has the same number of connected components as the subset $U_{s_{n}}$, which is less than or equal to $K$ by our choice of $K$.
	\end{proof}
	
	\begin{claim1}
		The subset $R\subset \T$ is finite, and thus by Claim \ref{cl:cc} it has at most $K$ points.
	\end{claim1}
	
	\begin{proof}[Proof of claim]
		A connected component $I$ of $R$ is the decreasing intersection of connected components $I_n$ of $R_n$. We must prove that $|I_n|\to 0$ as $n\to\infty$.
		
		By the pigeonhole principle, there exist $s\in S$, a connected component $J$ of $U_s$ and a sequence $\{m_k\}_k\subset\N$ such that for every $k\in\N$, $s_{m_k}=s$ and 
		\[I_{m_k}=\overline{s_1^{-1}\cdots s_{m_k-1}^{-1}(J)}.\]
		Now since by hypothesis $s_{m_k-1}^{-1}\neq s_{m_k}$, it follows from  Proposition \ref{p.I_tend_zero} that $\lim_{k\to\infty}|I_{m_k}|=0$. As the intervals $I_n$ are nested, this gives the desired conclusion.
	\end{proof}
	
	We now turn to the construction of the attracting subset $A$. Note that by Claim \ref{cl:cc}, there exists $N_0\in\N$ such that the number of connected components of $R_n$ does not depend on $n\ge N_0$ (this number equals $\#R$). Without loss of generality, we can assume that the sequence of elements $\{g_m\}_m$ is such that $n_m\ge N_0$ for all $m\in \N$.

	By construction and Theorem \ref{t:DKNfree}.\eqref{eq_ping_pong_free}, for every $m\in\N$, the image $g_m(\T\setminus R_{N_0})$ is contained in $U_{s_{n_m}^{-1}}$
	and has $\# R$ connected components. Let $J_0$ be a connected component of $g_0(\T\setminus R_{N_0})$, let $I$ be the connected component of $U_{s_{n_0}^{-1}}$ containing it and let $J_m$ denote $g_mg_0^{-1}(J_0)\subset U_{s_m^{-1}}$ for every $m\in\N$. By Proposition \ref{p.I_tend_zero}, we have $|J_m|\leq|g_mg_0^{-1}(I)|\to 0$ as $m\to\infty$. Let $x_m$ denote the midpoint of $J_m$. By compactness of $\T$, upon passing to a subsequence if necessary, we may assume that $x_m$ converges to a point $a\in\T$.
	Running the argument for every connected component of $g_0(\T\setminus R_{N_0})$, and upon considering a subsequence, we can also assume that there exists a finite subset $A$ such that for every $y\in \T\setminus R_{N_0}$, we have $g_m(y)\to a$ as $m\to\infty$ for some $a\in A$. By construction $\#A\leq\#R$.

	Now for $n\geq N_0$ and $m$ such that $n_m\geq n$ the subset $g_m(\T\setminus R_n)$ is contained in $U_{s_m^{-1}}$ and each of its connected components contains a unique connected component of $g_m(\T\setminus R_{N_0})$.
	Using the argument above, we see that the length of such a component tends to $0$ as $m$ tends to infinity so for every $y\in\T\setminus R_{n}$, we have $g_m(y)\to a$ as $m\to\infty$ for some $a\in A$. Finally, since $n$ is arbitrary, this fact holds for every $y\in\T\setminus R$. This proves the multiconvergence property.
\end{proof}

\begin{rem}
	We observe that if $G\subset\Homeo_+(\T)$ has the multiconvergence property, then it is always possible to choose (with the notation as in Definition \ref{d:multiconvergence}) the subsets $A$ and $R$ such that  $\#A=\#R$. Indeed, if we have $\#A<\#R$, then there exist a point $r\in R$ and a point $a\in A$ such that if $J_-$ and $J_+$ are the two connected components of $\T\setminus R$ adjacent to $r$, we have $g_{n_k}(y)\to a$ as $k\to\infty$ for every $y\in J_-\cup J_+$. In particular we also have $g_{n_k}(r)\to a$ as $k\to\infty$ and we may remove $r$ from $R$.
\end{rem}

\begin{lem}\label{l.bounded_fixed}
	Let $G\subset \Homeo_+(\T)$ be a subgroup with the multiconvergence property. Then the number of fixed points of a non-trivial element in $G$ is uniformly bounded.
\end{lem}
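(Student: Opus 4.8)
The plan is to argue by contradiction. Suppose there is a sequence of non-trivial elements $g_n\in G$ such that $\#\Fix(g_n)\to\infty$; in particular $\#\Fix(g_n)\geq 2K+1$ for all large $n$, where $K$ is the uniform constant from the multiconvergence property (Definition~\ref{d:multiconvergence}). Since the $g_n$ have more and more fixed points they are pairwise distinct (a single element cannot appear infinitely often), so we may apply the multiconvergence property to the sequence $\{g_n\}_n$: after passing to a subsequence, there are finite sets $A,R\subset\T$ with $\#A,\#R\leq K$ such that $g_n\restriction_{\T\setminus R}$ converges pointwise to a locally constant map $g_\infty$ with $g_\infty(\T\setminus R)=A$ and $g_\infty(a)=a$ for every $a\in A\setminus R$.

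The key step is the following observation about where fixed points can accumulate. Fix any point $x\in\T\setminus(R\cup A)$ that is \emph{not} a fixed point of $g_\infty$, i.e.\ $g_\infty(x)=a$ for some $a\in A$ with $a\neq x$. Choose disjoint open neighbourhoods $V\ni x$ and $W\ni a$ with $V\cap W=\varnothing$ and $V\subset\T\setminus R$. Since $g_\infty$ is locally constant on $\T\setminus R$ and equals $a$ near $x$, and since the convergence $g_n\to g_\infty$ is pointwise to a locally constant limit, for $n$ large we have $g_n(V)\subset W$ — here one uses that $g_n$ is monotone (orientation-preserving) and pins down the images of two points of $V$ close to $a$, forcing the whole interval $V$ into a small neighbourhood of $a$. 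In particular $g_n$ has no fixed point in $V$ for $n$ large. Consequently, for every $\eps>0$, all but finitely many fixed points of $g_n$ (for $n$ large) lie in the $\eps$-neighbourhood of $R\cup\{a\in A: g_\infty^{-1}(a)\ni \text{some fixed direction}\}$; more precisely, the set of accumulation points (as $n\to\infty$) of $\bigcup_n\Fix(g_n)$ is contained in $\overline{R}\cup A$, a finite set of at most $2K$ points.

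Now I convert this into a contradiction. By the previous paragraph, there exist finitely many points $p_1,\dots,p_\ell$ (with $\ell\leq 2K$) and, for each $\eps>0$, an integer $N(\eps)$ such that $\Fix(g_n)$ is contained in the union of the $\eps$-balls around the $p_i$ for all $n\geq N(\eps)$. Choosing $\eps$ small enough that these $\eps$-balls are pairwise disjoint intervals $B_1,\dots,B_\ell$, we see $\Fix(g_n)\subset B_1\cup\dots\cup B_\ell$. Within each interval $B_i$, the fixed point set $\Fix(g_n)\cap B_i$ is a closed subset of an interval; but an orientation-preserving homeomorphism of an interval that fixes its endpoints-region can be perturbed away — the cleanest route is: pick $n$ with $\#\Fix(g_n)>2K\geq 2\ell$ so that some $B_i$ contains at least three fixed points $u<v<w$ of $g_n$; then on the subinterval $[u,w]$ the restriction of $g_n$ is a homeomorphism fixing $u$ and $w$. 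I then want to say this contradicts multiconvergence by producing, inside $B_i$, a point $y$ with $g_\infty(y)=y$ while $y\notin R\cup A$, which is impossible by the structure of $g_\infty$ (its only fixed values are the points of $A\setminus R$, and points of $\T\setminus R$ mapping to themselves would have to be isolated among finitely many).

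The main obstacle is making the last step fully rigorous: one must rule out that $g_\infty$ has a fixed point $y\notin R$ with $g_\infty(y)=y$ that is not in $A$ — but this is immediate from the definition, since $g_\infty(\T\setminus R)=A$, so any $y\in\T\setminus R$ with $g_\infty(y)=y$ forces $y\in A$. Thus the delicate point is really the \emph{uniform} passage from ``$g_n\to g_\infty$ pointwise, $g_\infty$ locally constant'' to ``$g_n$ has no fixed point in a fixed neighbourhood of any non-fixed-point of $g_\infty$'', which is where orientation-preservation (monotonicity) of the $g_n$ is essential: a monotone map sending two nearby points into $W$ sends the interval between them into the interval spanned in $W$, hence into $W$, hence off the diagonal. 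Once that is in place, the count $\#\Fix(g_n)\leq \#(R\cup A)\leq 2K$ for $n$ large contradicts $\#\Fix(g_n)\to\infty$, completing the proof.
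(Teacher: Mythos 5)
Your overall strategy differs from the paper's, and it has a genuine gap at the final step. The paper fixes a single element $g$ of infinite order and applies the multiconvergence property to the sequence of powers $\{g^k\}_{k\in\N}$. The crucial point is that every fixed point $x$ of $g$ is simultaneously a fixed point of \emph{all} the $g^{n_k}$; hence if $x\notin R$, then $g_\infty(x)=\lim_k g^{n_k}(x)=x$, and since $g_\infty(\T\setminus R)=A$ this forces $x\in A$. Thus $\Fix(g)\subset R\cup A$, which has at most $2K$ points. (Non-trivial elements of finite order in $\Homeo_+(\T)$ are conjugate to rotations and have no fixed points at all, so the infinite-order case suffices.)

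You instead take an arbitrary sequence $\{g_n\}$ of distinct elements with $\#\Fix(g_n)\to\infty$ and apply multiconvergence to it. The accumulation part of your argument is fine: fixed points of $g_n$ must, for $n$ large, lie in small neighbourhoods $B_i$ of the $\le 2K$ points of $R\cup A$. But the jump from there to $\#\Fix(g_n)\le\#(R\cup A)$ is not justified, and this is precisely where the argument breaks down. Nothing prevents a single $g_n$ from having arbitrarily many fixed points clustered inside one ball $B_i$ around a point $p_i\in R\cup A$. Your attempted repair --- extracting three fixed points $u<v<w$ of $g_n$ in $B_i$ and producing from them a fixed point $y$ of $g_\infty$ with $y\notin R\cup A$ --- does not go through: as $n\to\infty$ those three fixed points all collapse to $p_i$, and pointwise convergence of $g_n$ to the locally constant $g_\infty$ records nothing about the internal wiggles of $g_n$ near $p_i$. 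There is simply no mechanism producing a fixed point of $g_\infty$ away from $R\cup A$. What you would need is that the fixed-point sets of your sequence are \emph{nested} or at least all contain a common large set, and that is exactly what choosing the sequence $\{g^k\}$ gives you for free: $\Fix(g^k)\supset\Fix(g)$ for every $k$. Replacing your arbitrary sequence by the powers of a single element, and applying multiconvergence to those, is the missing idea that closes the gap.
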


\begin{proof}
	Let $K\in\N$ be the constant from Definition \ref{d:multiconvergence} of multiconvergence property.
	Assume that $g\in G$ is an element of infinite order and consider the sequence $\{g^k\}_{k\in \N}$. As a consequence of the multiconvergence property the cardinality of any finite invariant cannot exceed $\#R=\#A\leq K$. This means in particular that $g$ has at most $2K$ fixed points. 
\end{proof}

\begin{proof}[Proof of Corollary \ref{c:bound_period}]
	Direct consequence of Theorem \ref{mthm:multiconvergence} and Lemma \ref{l.bounded_fixed}.	
\end{proof}

\begin{proof}[Proof of Corollary \ref{cor:rot_spectr_charac}] As we mentioned in Section \ref{ss:multicon_intro}, Matsuda proved in \cite{Matsuda2009} that subgroups of $\Diff^{\omega}_+(\T)$ with finite rotation spectrum are locally discrete. The converse implication follows from Corollary~\ref{c:bound_period} and  the fact that finitely generated virtually free groups have bounded torsion. We propose an alternative and simpler argument to prove this direction (which does not rely on the multiconvergence property, but on Theorem \ref{t:DKNfree} only).
	
	First, it is enough to consider free groups since if $G$ is virtually free, it contains a normal free subgroup $H$ of finite index $k$ (by the so-called Poincar\'e lemma). Hence for every $g\in G$, we have $g^k\in H$ and $k\rot(g)=\rot(g^k)\in\rot(H)$. So if the rotation spectrum $\rot(H)$ is finite, then so is $\rot(G)$ (in fact, a finite-index normal subgroup $H\lhd G$ can be obtained from the function rotation number, see the proof of \cite[Theorem C]{MarkovPartitions1}).
	
	Assume now that $G$ is free and locally discrete, and choose a free, symmetric generating system $S$, with associated DKN partition $\{U_s\}_{s\in S}$. Let $g\in G$ be a non-trivial element. As rotation number is a conjugacy-invariant, we can assume that $g$ can be written in a cyclically reduced normal form $g=s_n\cdots s_1$, for some $n\ge 1$ (that is, we assume $s_n\neq s_1^{-1}$). By Theorem \ref{t:DKNfree}, we have
	\[g(U_{s_n^{-1}})=s_n\cdots s_1(U_{s_n^{-1}})\subset U_{s_n^{-1}}.\]
	By Theorem \ref{t:DKNfree}, the number $q$ of connected components of $U_{s_n}$ is finite. So $g^q$ preserves all connected components of $U_{s_n}$ and therefore admits a fixed point: $\rot(g^q)=0$. Since $q$ is uniformly bounded by $K=\max_{s\in S}b_0(U_s)$, we deduce that $\rot(G)$ is finite, concluding the proof.
\end{proof}

\appendix

\section{Comparison of partitions} \label{appendix}

As we mentioned in Remark \ref{r:differentUs}, in \cite{DKN2014} the definition of the sets $U_s$ (appearing there as $\widetilde{\mathcal{M}}_\gamma$) is not exactly the same. Since the present paper relies on the results of \cite{DKN2014}, we discuss here differences and similarities of the two definitions.
As in Section \ref{ss.Free_DKN}, we consider a locally discrete, marked free group $(G,S)$ of real-analytic circle diffeomorphisms, with minimal invariant set $\Lambda$. The subsets $W_s\subset G$ and $U_s\subset \T$ will be the same as those defined at \eqref{eq:Wsets} and \eqref{eq:Usets}, respectively.
We recall now the notation  from \cite[\S 3.4]{DKN2014}. For $g\in G$ written in normal form as $g=s_\ell\cdots s_1$, and $x\in\T$, set

\begin{equation}\label{eq:Sintermediate}
\widehat{S}(g,x):=\sum_{k=0}^{\ell-1} (s_\ell\cdots s_1)'(x),
\end{equation}
and, for a generator $s\in S$, set
\begin{equation}\label{eq:Msets}
\tcM_s:=\left\{x\in\T\,\middle\vert\,\sup_{g\notin W_s} \widehat{S}(g,x)<\infty\right\}.
\end{equation}
Note that as soon as one $\tcM_s$ is non-empty (which is one of the most difficult results established in \cite{DKN2014}), then every other $\tcM_{s'}$ is non-empty, and their union is dense in $\Lambda$ (see \cite[Lemma 3.20]{DKN2014}). Moreover, for any $s\in S$, only finitely many connected components of $\tcM_s$ intersect $\Lambda$ (this is \cite[Lemma 3.30]{DKN2014} in case of minimal actions, and \cite[Lemma 4.7]{DKN2014} for the case of actions with minimal invariant Cantor set). It follows that the union $\bigcup_{s\in S}\tcM_s$ covers the whole minimal invariant set $\Lambda$, but finitely many points.

\begin{lem}\label{l:inclusion_appendix}
Let $(G,S)$ be a locally discrete, marked free group of real-analytic circle diffeomorphisms. For every generator $s\in S$, we have the inclusion $\tcM_s\subset U_s$.
\end{lem}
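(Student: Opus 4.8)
The plan is to unwind both definitions and observe that "$x\in\tcM_s$'' is a \emph{stronger}, Birkhoff-sum version of the contraction condition defining $U_s$; the task is therefore to manufacture, out of the uniform bound $C:=\sup_{g\notin W_s}\widehat S(g,x)<\infty$, a neighbourhood $I_x\ni x$ for which $\sup_{g\notin W_s,\ \|g\|\ge n}|g(I_x)|\to 0$. Morally this is just the statement that $\tcM_s$ is contracted by the words not starting with $s$, which is one of the outcomes of \cite[\S 3.4]{DKN2014}; the work is to check it in the present, slightly different, formalism.

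First I would record the combinatorial consequence of $x\in\tcM_s$: for every reduced ray $\xi=(\xi_1,\xi_2,\dots)$ with $\xi_1\ne s$, the partial sums $\widehat S(\xi_n\cdots\xi_1,x)=\sum_{k=0}^{n-1}(\xi_k\cdots\xi_1)'(x)$ are $\le C$, so, the terms being nonnegative, $\sum_{k\ge 0}(\xi_k\cdots\xi_1)'(x)\le C$ along every such ray; in particular $(\xi_k\cdots\xi_1)'(x)\to 0$. Second I would invoke the standard control-of-distortion estimate in $C^2$ (a fortiori $C^\omega$) regularity: since the total mass $\sum_k(\xi_k\cdots\xi_1)'(x)$ is bounded by $C$ and the generators have uniformly bounded non-linearity, a bootstrap argument produces $\delta_0>0$ such that, setting $I_x:=(x-\delta,x+\delta)$ with $\delta\le\delta_0$, every $g\notin W_s$ and every prefix of $g$ distorts $I_x$ by a factor at most $2$; hence $|g(I_x)|\le 2\,g'(x)\,|I_x|$ and $\sum_{k=0}^{\|g\|-1}|s_k\cdots s_1(I_x)|\le 2C|I_x|$ for every $g=s_\ell\cdots s_1\notin W_s$.

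The heart of the matter — and the main obstacle — is to upgrade this ray-wise decay to the \emph{uniform} contraction $\sup_{g\notin W_s,\ \|g\|\ge n}|g(I_x)|\to 0$. Here I would import the first-return analysis of \cite[\S 3.4]{DKN2014} (the analogue of Lemma \ref{l:first return prop}), applied to $I_x$ and to the words not starting with $s$: after possibly shrinking $\delta$ so that $I_x$ carries in its interior no fixed point of a first return, one obtains (a) that distinct wandering elements have pairwise disjoint images of $I_x$, whence $\sum_{\lambda\text{ wandering}}|\lambda(I_x)|\le|\T|$ and therefore $|\lambda(I_x)|\to 0$ as $\|\lambda\|\to\infty$; (b) two extreme first returns $g_\pm$ contracting towards $\partial I_x$, and a decomposition $g=\lambda\,g_n\cdots g_1$ of every $g\notin W_s$ into first returns followed by a wandering (or trivial) $\lambda$, with $g_n\cdots g_1(I_x)\subset I_x$ and $|g_n\cdots g_1(I_x)|\to 0$ as $n\to\infty$. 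Combining with the distortion bound, $|g(I_x)|=|\lambda(g_n\cdots g_1(I_x))|\le \tfrac{2}{|I_x|}\,|g_n\cdots g_1(I_x)|$, and also $|g(I_x)|\le|\lambda(I_x)|$ by monotonicity; since $\|g\|\to\infty$ forces $n\to\infty$ or $\|\lambda\|\to\infty$, in either case $|g(I_x)|\to 0$ uniformly in $g$. This is exactly $x\in U_s$.

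I expect the delicate points to be the faithful transcription of the DKN first-return machinery to the present setting — in particular verifying that a small interval around a point of $\tcM_s$ behaves like a piece of a connected component of the partition, so that "wandering'' and "first return'' make sense and the disjointness in (a) holds — together with the bookkeeping that $\|g\|\to\infty$ genuinely forces $n\to\infty$ or $\|\lambda\|\to\infty$ (using that first returns of large word length have very small image, again by (a)). The ray-wise decay and the distortion step are routine; all the dynamical content is concentrated in importing \cite[\S 3.4]{DKN2014}.
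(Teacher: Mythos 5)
Your proof is correct in outline, but it takes a noticeably heavier route than the paper's. The paper's proof is a one-paragraph application of the Schwarz lemma / control-of-distortion estimate (\cite[Proposition 2.4]{DKN2014}): the uniform bound $C_x=\sup_{g\notin W_s}\widehat S(g,x)<\infty$ directly furnishes, with no first-return analysis, a neighbourhood $I_x$ of $x$ on which \emph{every} $g\notin W_s$ has distortion at most $2$, and then the convergence of the partial sums $\widehat S(g,x)$ is read off to say that $g'(x)\to 0$ as $\|g\|\to\infty$ over $g\notin W_s$; bounded distortion converts this at once into $|g(I_x)|\le 2g'(x)|I_x|\to 0$, which is the defining condition for $x\in U_s$. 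In other words, where you spend the second half of your argument importing the decomposition $g=\lambda\,g_n\cdots g_1$, the pairwise disjointness of wandering images, and the contraction of the extreme first returns, the paper bypasses all of that machinery: once the distortion is tame on $I_x$, the summability of the derivatives along rays is all the dynamical input that is needed.

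Two remarks on the part of your argument that \emph{does} require attention. First, your worry that ray-wise decay of $(\xi_n\cdots\xi_1)'(x)$ must be \emph{upgraded} to the uniform statement $\sup_{g\notin W_s,\ \|g\|\ge n}g'(x)\to 0$ is not unreasonable; the paper's phrase ``by definition'' is terse here and implicitly leans on the arguments of \cite[\S 3.4]{DKN2014} establishing that the bound on $\widehat S$ does propagate uniformly over the tree, not merely along individual rays. If one wants a fully self-contained proof in this appendix, some version of the point must be addressed, and your route via first returns is one legitimate way to do it. Second, you yourself flag the real soft spot of your plan: the first-return/wandering calculus in \cite{DKN2014} is developed for \emph{connected components} of $\tcM_s$, not for an arbitrarily small interval $I_x$ around a point of $\tcM_s$. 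To run steps (a)--(b) on $I_x$ one has to restrict the first-return decomposition of the ambient component to $I_x$ and check that disjointness and monotonicity persist. This is doable but introduces exactly the kind of bookkeeping the paper's distortion argument was designed to avoid. In short: no gap, but you are re-deriving with heavier tools what the paper gets from $\widehat S(g,x)\le C_x$ plus Schwarz's lemma alone.
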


\begin{proof}
The first part follows directly from the usual control of distortion (namely Schwarz's lemma: see \cite[Proposition 2.4]{DKN2014}). Let $x\in\tcM_s$ and $C_x=\sup_{g\notin W_s}\widehat{S}(g,x)<\infty$. Then there exists a neighbourhood $I_x$ of $x$ whose size depends only on the generators of $G$ and on $C_x$ such that for all $g\notin W_s$, and $y,z\in I_x$, one has $\frac{g'(y)}{g'(z)}\leq 2.$ By definition, if we restrict ourselves to elements $g\notin W_s$, we have $\lim_{||g||\to\infty}g'(x)=0,$
so the uniform distortion control implies that $\lim_{||g||\to\infty}|g(I_x)|=0$. We deduce that $x\in U_s$.
\end{proof}

Note that this implies that, for every $s\in S$, only finitely many connected components of $U_s$ intersect $\Lambda$, and that $\bigcup_{s\in S}U_s$ covers the whole minimal invariant set $\Lambda$ but finitely many points.

We also deduce the following consequence, which is the analogue of Proposition \ref{p.I_tend_zero}. Observe that the proof of Proposition \ref{p.I_tend_zero} relies on Lemma \ref{l:first return prop}, which is stated for the DKN partition $\{U_s\}_{s\in S}$ but formally only proved in \cite{DKN2014} for the partition $\{\tcM_s\}_{s\in S}$ (although the proof can be easily adapted to the partition $\{U_s\}_{s\in S}$).

\begin{prop}\label{l:first return appendix}
	Let $(G,S)$ be a locally discrete, marked free group of real-analytic circle diffeomorphisms. For every generator $s\in S$ and connected component $I$ of $\tcM_s$, one has
	\[
	\lim_{n\to \infty}\sup_{g\notin W_s,\,\|g\|\ge n}|g(I)|=0.
	\]
\end{prop}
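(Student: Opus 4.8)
The plan is to obtain this statement as an immediate consequence of Proposition \ref{p.I_tend_zero}, which is the very same assertion for the DKN partition $\{U_s\}_{s\in S}$, via the inclusion $\tcM_s\subset U_s$ established in Lemma \ref{l:inclusion_appendix}. First I would fix a generator $s\in S$ and a connected component $I$ of $\tcM_s$. Since $\tcM_s\subset U_s$ and $I$ is connected, $I$ is contained in a single connected component $J$ of $U_s$. Applying Proposition \ref{p.I_tend_zero} to the interval $J$ then gives $\lim_{n\to\infty}\sup_{g\notin W_s,\,\|g\|\ge n}|g(J)|=0$.

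To conclude, I would note that for every $g\in G$ the inclusion $I\subset J$ of arcs yields $g(I)\subset g(J)$, hence $|g(I)|\le|g(J)|$; taking suprema over $g\notin W_s$ with $\|g\|\ge n$ and letting $n\to\infty$, the same limit holds with $I$ in place of $J$. The only slightly delicate point is that $\tcM_s$ need not be open, so its connected components may be non-open arcs; this is harmless, since an arc and its closure have the same length and the containment in a connected component of $U_s$ does not use openness.

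I do not expect a genuine obstacle here: the statement is essentially a corollary of Proposition \ref{p.I_tend_zero} together with Lemma \ref{l:inclusion_appendix}. If one wanted a self-contained argument avoiding the inclusion, one could instead rerun verbatim the first-return analysis from the proof of Proposition \ref{p.I_tend_zero}, now based on the version of Lemma \ref{l:first return prop} for connected components of $\tcM_s$ that is proved in \cite[\S 3.4, \S 4]{DKN2014}; but since Lemma \ref{l:inclusion_appendix} is available, the inclusion route is shorter and is the one I would carry out.
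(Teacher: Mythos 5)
Your main route is logically valid and gives a genuinely short proof, but it is not quite the derivation the paper has in mind, and the difference matters for the logical architecture of the appendix. You deduce the statement from Proposition~\ref{p.I_tend_zero} (the $U_s$ version) via the inclusion $\tcM_s\subset U_s$ of Lemma~\ref{l:inclusion_appendix}: since a connected component $I$ of $\tcM_s$ sits inside a single connected component $J$ of the open set $U_s$, and $|g(I)|\le|g(J)|$ for every homeomorphism $g$, the conclusion follows immediately. That is correct, and your remark that the possible failure of $\tcM_s$ to be open is harmless is also fine. However, notice what the paper says just before the statement: the proof of Proposition~\ref{p.I_tend_zero} itself rests on Lemma~\ref{l:first return prop}, which is stated for $\{U_s\}$ but \emph{only literally proved} in \cite{DKN2014} for $\{\tcM_s\}$. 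The whole point of the appendix is to give the skeptical reader a path that does not require adapting the DKN arguments to $\{U_s\}$. Under that reading, Proposition~\ref{l:first return appendix} should be established \emph{before}, and independently of, the $U_s$ version, by re-running the proof of Proposition~\ref{p.I_tend_zero} verbatim with $I$ a component of $\tcM_s$, using the DKN version of Lemma~\ref{l:first return prop} directly (which is what \cite{DKN2014} actually proves), and invoking Lemma~\ref{l:inclusion_appendix} only at the one step (the compactness bound~\eqref{eq:first_return0}) that appeals to the definition~\eqref{eq:Usets} of $U_s$. Your own second paragraph sketches exactly this re-run route; I would promote it from afterthought to the main argument, since it is the one that makes the appendix self-sufficient, whereas the inclusion route silently re-imports the very adaptation the appendix is designed to avoid.
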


Next, we give a more precise description of the inclusion $\tcM_s\subset U_s$.

\begin{lem}\label{l_concomp_same_Us}
	Let $(G,S)$ be a locally discrete, marked free group of real-analytic circle diffeomorphisms with minimal invariant set $\Lambda$. Fix a generator $s\in S$ and two distinct connected components $I$ and $I'$ of $\tcM_s$.
\begin{enumerate}[\rm (1)]
\item\label{i:concomp appendix} If both closures $\overline I$ and $\overline{I'}$ intersect the closure of a gap of $\Lambda$, then they are contained in distinct connected components of $U_s$.
\item\label{ii:concomp appendix} If $I$ and $I'$ share a common endpoint, then they are contained in the same connected component of $U_s$. 
\end{enumerate}
\end{lem}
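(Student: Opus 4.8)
The plan is to treat the two parts separately, using throughout the inclusion $\tcM_s\subset U_s$ (Lemma~\ref{l:inclusion_appendix}), the fact that $U_s$ is open with finitely many connected components meeting $\Lambda$, and the contraction estimate of Proposition~\ref{l:first return appendix}. For a connected component $O$ of $\tcM_s$, write $\hat O$ for the unique connected component of $U_s$ that contains it; part~(1) then asserts $\hat I\ne\hat{I'}$ and part~(2) asserts $\hat I=\hat{I'}$.

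For part~(2), let $p$ be the common endpoint of $I$ and $I'$. Since $I$ and $I'$ are distinct components of the open set $\tcM_s$, they are disjoint and lie on opposite sides of $p$, say $I=(q,p)$ and $I'=(p,r)$; set $\tilde I=I\cup\{p\}\cup I'=(q,r)$, which is a proper open arc of $\T$ (otherwise $|g(I)|+|g(I')|=|\T|$ for all $g$, contradicting Proposition~\ref{l:first return appendix}). For every $g\notin W_s$ the map $g$ preserves orientation, so $g(\tilde I)=(g(q),g(r))$ and $|g(\tilde I)|=|g(I)|+|g(I')|$; applying Proposition~\ref{l:first return appendix} to $I$ and to $I'$ gives
\[
\lim_{n\to\infty}\sup_{g\notin W_s,\,\|g\|\ge n}|g(\tilde I)|
\ \le\ \lim_{n\to\infty}\Big(\sup_{g\notin W_s,\,\|g\|\ge n}|g(I)|+\sup_{g\notin W_s,\,\|g\|\ge n}|g(I')|\Big)=0 .
\]
Thus $\tilde I$ is an admissible neighbourhood of $p$ in the sense of~\eqref{eq:Usets}, so $p\in U_s$. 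Since $U_s$ is open, the component of $U_s$ through $p$ contains a neighbourhood of $p$, hence meets both $I$ and $I'$; as each of them lies in a unique component of $U_s$, this forces $\hat I=\hat{I'}$.

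For part~(1) I would argue by contradiction, assuming $\hat I=\hat{I'}=:\hat O$; then $\hat O$ contains the closed arc $A$ joining $I$ to $I'$. The first input, extracted from the distortion estimates underlying \cite{DKN2014} in the same spirit as the proof of Lemma~\ref{l:inclusion_appendix}, is that a connected component of $\tcM_s$ cannot meet the interior of a gap $J$ of $\Lambda$ whose stabilizer $\stab{G}{J}$ is non-trivial: for $x\in J^\circ$ one has $\sup_{g\notin W_s}\widehat S(g,x)=\infty$, since one may take $g$ with arbitrarily many iterates of a generator of $\stab{G}{J}$ among its prefixes, each carrying $J$ onto itself and hence contributing a definite amount to the Birkhoff sum. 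Consequently the hypothesis yields a gap $J=(a,b)$ with non-trivial stabilizer, whose closure is met by both $\overline I$ and $\overline{I'}$; as neither component enters $J^\circ$, one of them abuts $a$ and the other abuts $b$, each from outside $J$, so $I$ and $I'$ are separated by $\overline J\subset A$. It then suffices to contradict $\{a,b\}\subset\hat O\subset U_s$ by showing that one of $a,b$ is \emph{not} in $U_s$: if $h$ is a reduced generator of $\stab{G}{J}$, then for one of the two endpoints of $J$ — the one that is "repelling'' for the relevant iterate — every neighbourhood is expanded by the powers $h^{\pm n}$ onto the complement of an arbitrarily small arc of $\T$, so that endpoint cannot satisfy~\eqref{eq:Usets}.

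I expect this last step to be the main obstacle: it requires matching the location of the fixed points of the gap holonomy $h$ against the word combinatorics defining $W_s$, i.e.\ checking that the powers of $h$ which blow up a neighbourhood of the bad endpoint are precisely the ones lying outside $W_s$ — this is where the hypothesis $J^\circ\not\subset\tcM_s$ (equivalently, $J$ is not absorbed by a component of $\tcM_s$) is used. This bookkeeping is of the same nature as the arguments in \cite{DKN2014}; once it is in place, the point $a\notin U_s$ (or $b\notin U_s$) is an endpoint of $\hat O$ separating $I$ from $I'$ inside $U_s$, contradicting $\hat I=\hat{I'}$ and finishing part~(1).
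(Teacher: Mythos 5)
Your proof of part (2) is correct and is essentially the paper's own argument: set $I_x=I\cup\{x\}\cup I'$, apply Proposition~\ref{l:first return appendix} to each piece, deduce $x\in U_s$, and conclude that $I,I'$ lie in the same component of the open set $U_s$.

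Part (1), however, has a genuine gap and is, by your own admission, unfinished. Two problems. First, the deduction ``consequently the hypothesis yields a gap $J$ with non-trivial stabilizer'' is a non-sequitur as written: from the claim that components of $\tcM_s$ avoid the interiors of gaps with non-trivial stabilizer one cannot conclude that the particular gap $J$ touched by $\overline I$ and $\overline{I'}$ has non-trivial stabilizer. (The conclusion happens to be true, but it needs to be derived from the first-return structure of $\tcM_s$, not from the claim you state.) Second, and more seriously, the crucial step --- showing that one endpoint of $J$ fails to lie in $U_s$ by matching the repelling fixed point of the gap holonomy $h$ against the set $W_s$ --- is exactly the bookkeeping you flag as the ``main obstacle'' and leave open. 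The paper avoids this entirely by running the contradiction in the other direction: assume $I$ and $I'$ lie in the same component of $U_s$; then the compact arc $\overline J$ sits inside that component, so the definition~\eqref{eq:Usets} of $U_s$ together with compactness forces $\sup_{g\notin W_s,\,\|g\|\ge n}|g(J)|\to 0$. On the other hand, the first-return lemma (the analogue of Lemma~\ref{l:first return prop} for $\{\tcM_s\}$, i.e.\ \cite[Lemma 4.4]{DKN2014}) applied to $I$ produces a first return $g\notin W_s$ fixing the endpoint of $I$ lying in $\overline J$; since $g$ permutes gaps and fixes a boundary point of $J$, it fixes $J$ setwise, so $|g^n(J)|=|J|$ does not tend to $0$, the desired contradiction. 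Note that here the element $g$ witnessing the failure is handed to you by the first-return machinery, already known to lie outside $W_s$, so there is no need to analyze which powers of a gap holonomy fall outside $W_s$ or to pin down which endpoint of $J$ is repelling. This is a genuinely simpler route, and I'd recommend replacing your attempt for part (1) by it.
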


\begin{rem}
	Note that, as a consequence of \eqref{i:concomp appendix}, in the situation \eqref{ii:concomp appendix} the common endpoint cannot belong to the closure of a gap of $\Lambda$
\end{rem}

\begin{proof}
We first prove the assertion \eqref{i:concomp appendix}. Assume that both closures $\overline I$ and $\overline{I'}$ intersect the closure of the same gap $J$ of $\Lambda$. Assume that $I$ and $I'$ are contained in the same connected component of $U_s$, so the closure  $\overline{J}$ of the gap is inside this component. By compactness and the definition \eqref{eq:Usets} of $U_s$, this implies that
\begin{equation}\label{eq:image gap appendix}
\lim_{n\to \infty}\sup_{g\notin W_s,\,\|g\|\ge n}|g(J)|=0.
\end{equation}
However, using the analogue of Lemma \ref{l:first return prop} for the partition $\{\tcM_s\}_{s\in S}$, applied to $I$, we get a first return $g\notin W_s$ which fixes the endpoint of $I$ contained in $J$ (see more specifically \cite[Lemma 4.4]{DKN2014}). It follows that $g$ fixes the whole gap $J$, contradicting \eqref{eq:image gap appendix}.
Hence, the gap $J$ is not contained in a connected component of $U_s$,  and so the two connected components of $U_s$ containing $I$ and $I'$ respectively are disjoint.

Assume next that $I$ and $I'$ have a common endpoint $x$, and write $I_x=I\cup \{x\}\cup I'$, which is a neighbourhood of $x$. Note that for every $g\in G$, we have $|g(I_x)|=|g(I)|+|g(I')|$.
From Proposition \ref{l:first return appendix} applied to $I$ and $I'$ individually, we deduce that
\[
\lim_{n\to \infty}\sup_{g\notin W_s,\,\|g\|\ge n}|g(I_x)|=0
\]
and therefore $x\in U_s$. This proves \eqref{ii:concomp appendix}.
\end{proof}


Finally, we recall the following fact and its proof, appearing in \cite[Lemma 7.6]{MarkovPartitions1}.

\begin{lem}\label{l.UM_vidouille}
	Let $(G,S)$ be a locally discrete, marked free group of real-analytic circle diffeomorphisms. For every two distinct generators $s$ and $s'\in S$, we have $U_s\cap U_{s'}\cap\Lambda=\varnothing$.
\end{lem}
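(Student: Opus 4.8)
The plan is to argue by contradiction, the point being that a common point of $U_s$ and $U_{s'}$ (with $s\neq s'$) on $\Lambda$ would force a non-degenerate interval meeting $\Lambda$ to be contracted to zero length by \emph{all} sufficiently long elements of $G$, which is incompatible with minimality of $\Lambda$. Concretely, suppose $x\in U_s\cap U_{s'}\cap\Lambda$ with $s\neq s'$. By \eqref{eq:Usets} there are open intervals $I_1\ni x$ and $I_2\ni x$ with $\lim_{n\to\infty}\sup_{g\notin W_s,\ \|g\|\ge n}|g(I_1)|=0$ and $\lim_{n\to\infty}\sup_{g\notin W_{s'},\ \|g\|\ge n}|g(I_2)|=0$; set $I=I_1\cap I_2$, an open interval containing $x$. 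The key combinatorial observation is that, since $s\neq s'$, one has $W_s\cap W_{s'}=\varnothing$: a non-trivial element of the free group $G$ has a well-defined first letter in its normal form, and the trivial element lies in no $W_t$, so $G\setminus\{1\}=\bigsqcup_{t\in S}W_t$. Hence every $g\in G$ with $\|g\|\ge 1$ avoids at least one of $W_s$ and $W_{s'}$, and using $I\subseteq I_1$, $I\subseteq I_2$ we obtain
\[
\sup_{\|g\|\ge n}|g(I)|\ \le\ \sup_{g\notin W_s,\ \|g\|\ge n}|g(I_1)|\ +\ \sup_{g\notin W_{s'},\ \|g\|\ge n}|g(I_2)|,
\]
which tends to $0$ as $n\to\infty$; that is, $|g(I)|\to 0$ as $\|g\|\to\infty$, now over the whole of $G$.

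Next I would bring in minimality. Since $x\in I\cap\Lambda$, the set $I\cap\Lambda$ is a non-empty relatively open subset of $\Lambda$, and $\bigcup_{g\in G}\bigl(g(I)\cap\Lambda\bigr)$ is non-empty, $G$-invariant and relatively open in $\Lambda$, hence equal to $\Lambda$ by minimality; thus $\Lambda\subseteq\bigcup_{g\in G}g(I)$. By compactness of $\Lambda$ there exist $g_1,\dots,g_n\in G$ with $\Lambda\subseteq\bigcup_{i=1}^n g_i(I)$; put $L=\max_i\|g_i\|$. Applying an arbitrary $g\in G$ and using $g(\Lambda)=\Lambda$, we get, for \emph{every} $g\in G$,
\[
\Lambda\ \subseteq\ \bigcup_{i=1}^n (gg_i)(I),
\]
a union of $n$ arcs each of length at most $\eps(g):=\sup_{\|h\|\ge \|g\|-L}|h(I)|$, because $\|gg_i\|\ge\|g\|-L$; and $\eps(g)\to 0$ as $\|g\|\to\infty$ by the first paragraph.

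To conclude, choose for each large $m$ an element $g^{(m)}\in G$ with $\|g^{(m)}\|=m$ (the free group $G$ is infinite): this yields covers of $\Lambda$ by $n$ arcs $A_1^{(m)},\dots,A_n^{(m)}$ whose lengths tend to $0$. Picking $c_i^{(m)}\in A_i^{(m)}$ and passing to a subsequence along which $c_i^{(m)}\to p_i$ for each $i$, a pigeonhole argument shows that every $z\in\Lambda$ equals some $p_i$, so $\Lambda$ would be finite --- contradicting that the minimal invariant set $\Lambda$ is infinite (it is the whole circle or a Cantor set). Therefore $U_s\cap U_{s'}\cap\Lambda=\varnothing$. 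I expect the only genuinely delicate step to be the passage, in the first paragraph, from the two \emph{conditional} decays (over $G\setminus W_s$ and over $G\setminus W_{s'}$) to the \emph{unconditional} one; this rests entirely on the disjointness $W_s\cap W_{s'}=\varnothing$, after which the rest is a routine minimality-and-compactness contradiction.
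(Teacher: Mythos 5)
Your proof is correct, and the first half is identical in spirit to the paper's: both arguments exploit the disjointness $W_s\cap W_{s'}=\varnothing$ to upgrade the two one-sided contraction statements to an unconditional one, $\lim_{n\to\infty}\sup_{\|g\|\ge n}|g(I)|=0$, over the whole group. Where you diverge is in how you derive a contradiction from this. The paper takes a one-line shortcut: it invokes Sacksteder's theorem to produce a point $y\in\Lambda\cap I_x$ and an element $h\in G$ with $h'(y)>1$, and this hyperbolic expansion at a point of $I_x$ visibly contradicts $|h^n(I_x)|\to 0$. You instead run a minimality-plus-compactness argument: minimality of $\Lambda$ and openness of $\bigcup_g g(I)$ give a finite subcover $\Lambda\subseteq\bigcup_{i=1}^n g_i(I)$, and applying an arbitrary $g$ with $\|g\|$ large shows $\Lambda$ is covered by $n$ arcs whose lengths go to $0$ uniformly; a pigeonhole/limit-point argument then forces $\Lambda$ to be a set of at most $n$ points, contradicting that the minimal set of such a group is a Cantor set or the whole circle. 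Your route is more elementary in the sense that it bypasses Sacksteder's theorem entirely, using only minimality, compactness and the word-length estimate $\|gg_i\|\ge\|g\|-\max_i\|g_i\|$; the paper's route is shorter because it leans on a deep dynamical result that is in any case available in the $C^2$ (here $C^\omega$) setting. Both are valid proofs; yours is a genuinely different and self-contained alternative for the contradiction step.
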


\begin{proof}
	Take distinct generators $s$ and $s'\in S$, and assume there exists $x\in U_s\cap U_{s'}\cap\Lambda$. Then there exists a neighbourhood $I_x$ of $x$ such that $\lim_{||g||\to\infty}|g(I_x)|=0$ (we use that for all $g\in G$, we have $g\notin W_s$ or $g\notin W_{s'}$ since $s\neq s'$). We have $\Lambda\cap I_x\neq\varnothing$ so Sacksteder's theorem implies the existence of a point $y\in\Lambda\cap I_x$ and an element $h\in G$ such that $h'(y)>1$. This contradicts that $\lim_{n\to\infty}|h^n(I_x)|=0$.
\end{proof}

It is now quite straightforward to deduce from the previous lemmas that the two partitions $\{U_s\}_{s\in S}$ and $\{\tcM_s\}_{s\in S}$ are ``equivalent'' in the following sense.

\begin{prop}\label{p.equivalence_DKN}
Let $(G,S)$ be a locally discrete, marked free group of real-analytic circle diffeomorphisms. For every generator $s\in S$, we have the inclusion $\tcM_s\subset U_s$ and the complement $U_s\setminus\tcM_s$ consists of intervals entirely contained in closures of gaps of $\Lambda$ and a finite number of points in $\Lambda$, which are topologically hyperbolic fixed points $x$ for some element $g$ in the group, but with derivative $g' (x) = 1$.
\end{prop}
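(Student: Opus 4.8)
The plan is to combine the purely set-theoretic content of Lemmas~\ref{l:inclusion_appendix}, \ref{l_concomp_same_Us} and~\ref{l.UM_vidouille} with the covering statements recalled above, and to extract the derivative information from the first-return dynamics of \cite{DKN2014}. The inclusion $\tcM_s\subset U_s$ is exactly Lemma~\ref{l:inclusion_appendix}, so the work is entirely in describing $U_s\setminus\tcM_s$.

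First I would fix a connected component $I$ of $U_s$ (there are finitely many, by Theorem~\ref{t:DKNfree}) and separate $I\setminus\tcM_s$ into its part inside $\Lambda$ and its part inside the gaps of $\Lambda$. The key claim is that $(U_s\setminus\tcM_s)\cap\Lambda$ is finite: if $x\in U_s\cap\Lambda$ and $x\notin\tcM_s$, then since $\bigcup_{s'\in S}\tcM_{s'}$ covers all but finitely many points of $\Lambda$, either $x$ lies in that fixed finite exceptional set, or $x\in\tcM_{s'}$ for some $s'\neq s$; but $\tcM_{s'}\subset U_{s'}$ by Lemma~\ref{l:inclusion_appendix}, so the latter would force $x\in U_s\cap U_{s'}\cap\Lambda$, contradicting Lemma~\ref{l.UM_vidouille}. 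Granting this, a short topological argument identifies the connected components $K$ of $U_s\setminus\tcM_s$: if $K$ has an interior point outside $\Lambda$, it lies in some gap $J$, and if $K$ were not contained in $\overline J$ it would contain an endpoint $z$ of $J$ and extend past it, whence (using that $\Lambda$ is perfect and $z$ is not interior to a gap on that side) $K$ would contain infinitely many points of $\Lambda$ — impossible. So each component of $U_s\setminus\tcM_s$ is either an interval contained in the closure of a single gap, or one of the finitely many points of $\Lambda\cap(U_s\setminus\tcM_s)$; this is already the asserted structure.

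It then remains to describe those finitely many points $x\in\Lambda\cap(U_s\setminus\tcM_s)$. I would first show each such $x$ is an endpoint of a connected component $I_0$ of $\tcM_s$: as $\Lambda$ is perfect, it accumulates on $x$ from at least one side inside $I$, and all but finitely many of the nearby points of $\Lambda$ lie in $\tcM_s$ (again by Lemma~\ref{l.UM_vidouille} plus the covering statement); since only finitely many components of $\tcM_s$ meet $\Lambda$, one of them must accumulate on $x$, hence has $x$ as a boundary point, and $x\notin\tcM_s$ forces it to be an endpoint. The $\tcM$-analogue of Lemma~\ref{l:first return prop}(2) (that is \cite[Lemma~4.4]{DKN2014}) now produces a first return $g\notin W_s$ fixing $x$ with $g^k(y)\to x$ for every $y\in I_0$; topological attraction from the $I_0$-side immediately forces $g'(x)\le 1$.

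The hard part will be to exclude $g'(x)<1$ and to pin down the two-sided behaviour, and this is the only step that is not formal bookkeeping. The point is that if $g'(x)<1$ then $x$ is a $C^1$-contracting fixed point of $g$, and the distortion control for first returns in \cite{DKN2014} (Schwarz's lemma together with the decomposition of elements of $G\setminus W_s$ into first returns, Lemma~\ref{l:first return prop}(4)) forces $\sup_{h\notin W_s}\widehat{S}(h,x)<\infty$, i.e.\ $x\in\tcM_s$ — contradicting the choice of $x$; the same estimate rules out $g$ being topologically attracting on both sides of $x$. Hence $g'(x)=1$, so the germ of $g$ at $x$ is parabolic and non-trivial (since $g\neq\id$ and $g$ is real-analytic, $x$ is isolated in $\mathrm{Fix}(g)$); being monotone on each side it is purely attracting or purely repelling there, and since it attracts the $I_0$-side but not both sides, it repels the other side. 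Thus $x$ is a topologically hyperbolic fixed point of $g$ with $g'(x)=1$, which completes the proof.
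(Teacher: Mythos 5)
Your skeleton matches the paper's: use Lemma~\ref{l:inclusion_appendix} for $\tcM_s\subset U_s$, then analyse the components of $U_s\setminus\tcM_s$, separating the gap-parts from the points in $\Lambda$ and using Lemma~\ref{l.UM_vidouille} together with the covering statement to eliminate interior points of $\Lambda$ in non-trivial components. Where the two proofs diverge is the final description of the points of $\Lambda\cap(U_s\setminus\tcM_s)$: the paper simply invokes \cite[Lemmas 3.29 and 4.6]{DKN2014} (and \cite[Lemmas 3.23, 4.4]{DKN2014} for the existence of the first return), together with Lemma~\ref{l_concomp_same_Us}\,(2) to see that a singleton component does not sit in a gap closure, whereas you attempt to re-derive the derivative statement from Schwarz-type distortion control.

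That re-derivation contains a genuine error, not merely a gap. You conclude ``since it attracts the $I_0$-side but not both sides, it repels the other side,'' reading \emph{topologically hyperbolic} as ``semi-stable.'' But this is the opposite of what happens, and it directly contradicts $x\in U_s$: the first return $g$ fixing $x$ is cyclically reduced with first letter $\neq s$, so $g^n\notin W_s$ for all $n$, and the defining property of $U_s$ gives $|g^n(I_x)|\to 0$ for a neighbourhood $I_x$ of $x$. Since $g(x)=x$, this forces $g^n(y)\to x$ for every $y\in I_x$ on \emph{both} sides of $x$; if $g$ repelled one side, $|g^n(I_x)|$ would not tend to zero. So $x$ is a two-sided topologically attracting parabolic fixed point of $g$, which is exactly what ``topologically hyperbolic with $g'(x)=1$'' means here, and the assertion that ``the same estimate rules out $g$ being topologically attracting on both sides'' is trying to prove the wrong thing. (Indeed the two-sided parabolic attracting case is precisely the one where $x\in U_s\setminus\tcM_s$: nearby orbits still shrink, so $x\in U_s$, but $(g^n)'(x)=1$ for all $n$, so the sums $\widehat S(\cdot,x)$ diverge and $x\notin\tcM_s$; your Schwarz argument for the $g'(x)<1$ case relies on geometric decay of $(g^n)'(x)$ and has no force when $g'(x)=1$.) I would also flag that the $g'(x)<1$ exclusion, while plausible, is only sketched; the clean route, and the one the paper takes, is to cite the DKN lemmas directly rather than reprove them.
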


\begin{proof}
	Fix a generator $s\in S$. Lemma \ref{l:inclusion_appendix} gives the inclusion $U_s\subset\tcM_s$. Let $C$ be a connected component of $U_s\setminus\tcM_s$. Assume first that $C=\{x\}$ is a single point, then Lemma \ref{l_concomp_same_Us}.\eqref{ii:concomp appendix} gives that this does not belong to the closure of a gap of $\Lambda$, and moreover, by the analogue of Lemma \ref{l:first return prop} for $\{\tcM_s\}_{s\in S}$ (see more specifically \cite[Lemma 3.23]{DKN2014}  for the case of minimal action, and \cite[Lemma 4.4]{DKN2014} for the case of minimal invariant Cantor set) its proof shows that there exists an element $g\notin W_s$ fixing $x$. As $x\in U_s$, the point $x$ must be a topologically hyperbolic fixed point for $g$, and moreover $g'(x)=1$ (this is given by \cite[Lemma 3.29]{DKN2014} in the case of minimal action, or by \cite[Lemma 4.6]{DKN2014} in the case of minimal invariant Cantor set).
	
	Assume next that $C$ is a non-trivial interval, and that $C$ contains a point $x$ of $\Lambda$ in its interior. Then $x$ is an accumulation point for $C\cap \Lambda$. As $\bigcup_{s'\in S}\tcM_{s'}$ covers $\Lambda$ with the exception of finitely many points, there must be a different generator $s'\neq s$ in $S$ such that $C\cap \tcM_{s'}\cap \Lambda\neq \varnothing$. Using Lemma \ref{l:inclusion_appendix} we deduce that $C\cap U_{s'}\cap \Lambda\neq \varnothing$, and since $C\subset U_s$, we then have $U_s\cap U_{s'}\cap \Lambda\neq \varnothing$, contradicting Lemma \ref{l.UM_vidouille}.
\end{proof}

\section*{Acknowledgments}
The results of this work and its companion \cite{MarkovPartitions1} were first announced in 2016. 
We warmly thank the institutions which have welcomed us during these years: the institutions of Rio de Janeiro PUC, IMPA, UFF, UFRJ, UERJ, the Universidad de la Rep\'ublica of Montevideo, and the Institut de Math\'ematiques de Bourgogne of Dijon.

\footnotesize{
	S.A. was partially supported by ANII via the Fondo Clemente Estable (project FCE\_3\_2018\_1\_148740), by CSIC, via the project I+D 389, by the MathAmSud project RGSD  (Rigidity and Geometric Structures in Dynamics) 19-MATH-04, by the LIA-IFUM and by the Distinguished Professor Fellowship of FSMP. He also acknowledges the project  ``Jeunes
	Géomètres'' of F. Labourie (financed by the Louis D. Foundation).
	
P.G.B. was supported by Ministerio de Educación, Cultura y Deporte (Grant No.\ MTM2017-87697-P) and Conselho Nacional de Desenvolvimento Científico e Tecnológico (CNPq).

D.F. was partially supported by the RFBR project 20-01-00420.

	V.K. was partially supported by the R\'eseau France-Br\'esil en Math\'ematiques, by the project ANR Gromeov (ANR-19-CE40-0007), and by Centre Henri Lebesgue
	(ANR-11-LABX-0020-01).
	
	C.M. was partially supported by CAPES postdoc program, Brazil (INCTMat \& PNPD 2015--2016), the
	Ministerio de Economía y Competitividad and Ministerio de Ciencia e Innovación, Grants MTM2014-
	56950--P and PID2020-114474GB-I00 (Spain UE), the Programa Cientista do Estado do Rio de Janeiro, FAPERJ, Brazil (2015--2018), CNPq research grant 310915/2019-8 (Brazil) and MathAmSud: ``Rigidity and Geometric Structures in Dynamics'' 2019-2020 (CAPES-Brazil). He also received support from the CSIC, via the Programa de Movilidad e Intercambios Acad\'emicos.

	M.T. was partially supported by PEPS -- Jeunes Chercheur-e-s -- 2017 and 2019 (CNRS), the R\'eseau France-Br\'esil en Math\'ematiques, MathAmSud RGSD  (Rigidity and Geometric Structures in Dynamics) 19-MATH-04,
	the project ANER Agroupes (AAP 2019 R\'egion Bourgogne--Franche--Comt\'e), and by the project ANR Gromeov (ANR-19-CE40-0007). His host department IMB receives support from the
	EIPHI Graduate School (ANR-17-EURE-0002).
}

\bibliographystyle{plain}
\bibliography{biblio2}

\begin{flushleft}

	{\scshape S\'ebastien Alvarez}\\
	CMAT, Facultad de Ciencias, Universidad de la Rep\'ublica\\
	Igua 4225 esq. Mataojo. Montevideo, Uruguay.\\
	email: \texttt{salvarez@cmat.edu.uy}

	\smallskip
	
	{\scshape Pablo G. Barrientos}\\
	Universidade Federal Fluminense\\
	Rua Prof. Marcos Waldemar de Freitas Reis, S/N -- Bloco H, 4o Andar\\
	Campus do Gragoatá, Niterói, Rio de Janeiro 24210-201, Brasil\\
	email:  \texttt{pgbarrientos@id.uff.br}
	
	\smallskip
	
	{\scshape Dmitry Filimonov}\\
	HSE University\\
	20 Myasnitskaya ulitsa,
	101000 Moscow, Russia\\
	email: \texttt{dfilimonov@hse.ru}
	
	\smallskip
	
	{\scshape Victor Kleptsyn}\\
	CNRS, Institut de R\'echerche Math\'ematique de Rennes (IRMAR, UMR 6625)\\
	B\^at. 22-23, Campus Beaulieu,
	263 avenue du G\'en\'eral Leclerc,
	35042 Rennes, France\\
	email: \texttt{victor.kleptsyn@univ-rennes1.fr}
	
	\smallskip

	{\scshape Dominique Malicet}\\
	Laboratoire d'Analyse et de Math\'ematiques Appliquées (LAMA, UMR 8050)\\
	Universit\'e Gustave Eiffel\\
	5  bd.~Descartes, 77454  Champs  sur Marne, France\\
	email: \texttt{dominique.malicet@crans.org}
	
	\smallskip
	
	{\scshape Carlos Meni\~no Cot\'on}\\
	CITMAGA \& Departamento de Matemática Aplicada I\\
	Instituto Investigaciones Tecnológicas\\
	Rúa Constantino Candeira, CP15782, Santiago de Compostela, Spain\\[.3em]
	Escola de Enxeñería Industrial\\
	Universidade de Vigo\\
	Rúa Conde de Torrecedeira 86, CP36208, Vigo, Spain.\\
%
	email:  \texttt{carlos.menino@uvigo.gal}

	\smallskip
	
	{\scshape Michele Triestino}\\
	Institut de Math\'ematiques de Bourgogne (IMB, UMR 5584)\\
	Universit\'e de Bourgogne\\
	9 av.~Alain Savary, 21000 Dijon, France\\
	email: \texttt{michele.triestino@u-bourgogne.fr}
	
\end{flushleft}

\end{document}